\numberwithin{equation}{section}
\numberwithin{figure}{section}
\numberwithin{equation}{section}
\numberwithin{figure}{section}
\numberwithin{equation}{section}
\newtheorem{theorem}{Theorem}
\newtheorem{lemma}[theorem]{Lemma}
\newtheorem{corollary}[theorem]{Corollary}
\newtheorem{proposition}[theorem]{Proposition}
\newtheorem{problem}[theorem]{Problem}
\theoremstyle{definition}
\newtheorem{definition}[theorem]{Definition}
\newtheorem*{acknowledgements*}{Acknowledgements}
\theoremstyle{remark}
\newtheorem{remark}[theorem]{Remark}
\numberwithin{theorem}{section}   
\thanks{The work of the first author was completed as a part of the implementation of the development program of the Scientific and Educational Mathematical Center Volga Federal District, agreement no. 075-02-2021-1393.}
\thanks{}
\thanks{}
\subjclass[2010]{Primary 46B70; Secondary 46E30, 46M35, 46A45}
\keywords{interpolation space, quasi-Banach space, $p$-convex space, $L$-convex space, $K$-functional, Calder\'{o}n-Mityagin property, Lions-Peetre $K$-spaces, $l^p$-spaces, $L^p$-spaces}
\begin{document}
\title[A description of interpolation spaces]{A description of interpolation
spaces for quasi-Banach couples by real $K$-method}
\author{Sergey V. Astashkin}
\address{Astashkin:Department of Mathematics, Samara National Research
University, Moskovskoye shosse 34, 443086, Samara, Russia}
\email{astash56@mail.ru}
\author{Per G. Nilsson}
\address{Nilsson: Roslagsgatan 6, 113 55 Stockholm, Sweden}
\email{pgn@plntx.com}
\date{\today }

\begin{abstract}
The main aim of this paper is to develop a general approach, which allows to
extend the basics of Brudnyi-Kruglyak interpolation theory to the realm of
quasi-Banach lattices. We prove that all $K$-monotone quasi-Banach lattices
with respect to a $L$-convex quasi-Banach lattice couple have in fact a
stronger property of the so-called $K(p,q)$-monotonicity for some $0<q\leq
p\leq 1$, which allows us to get their description by the real $K$-method.
Moreover, we obtain a refined version of the $K$-divisibility property for
Banach lattice couples and then prove an appropriate version of this
property for $L$-convex quasi-Banach lattice couples. The results obtained
are applied to refine interpolation properties of couples of sequence $l^{p}$%
- and function $L^{p}$-spaces, considered for the full range $0<p<\infty $.
\end{abstract}

\maketitle

\section{Introduction}

\label{Intro}

It is well known that the interpolation theory of operators has numerous
valuable applications to various fields of mathematics: harmonic analysis,
Banach space theory, the theory of orthogonal series, noncommutative
analysis and many others. This feature is fully reflected in excellent books
devoted to this theory; see, for example, Bergh and Löfström \cite{BL76},
Bennett and Sharpley \cite{BSh}, Brudnyi and Kruglyak \cite{BK91}, Krein,
Petunin and Semenov \cite{KPS82}, Triebel \cite{Triebel}. One of the main
reasons for such fruitful applicability of interpolation theory is that, for
many couples $(X_{0},X_{1})$, we can effectively describe the class $%
Int(X_{0},X_{1})$ of all interpolation spaces. In most of the known cases of
couples $\left( X_{0},X_{1}\right) $ for which this is possible, this
description is formulated by using the Peetre $K$-functional, which plays an
extremely important role in the theory.

\vskip0.2cm

Since the first example of such a couple was obtained by Calder\'{o}n \cite{CAL}
and Mityagin \cite{Mit}, for those couples the terminology \textit{Calder\'{o}n couple} or \textit{Calder\'{o}n-Mityagin couple} is usually used. They proved
independently that a Banach function space $X$ on an arbitrary underlying
measure space is an interpolation space with respect to the couple $\left(
L^{1},L^{\infty }\right) $ on that measure space if and only if the
following monotonicity property\footnote{%
Here we are using Calder\'{o}n's terminology. Mityagin formulates this result
somewhat differently.} holds: if $f\in X$, $g\in L^{1}+L^{\infty }$ and 
\begin{equation*}
\int\nolimits_{0}^{t}g^{\ast }\left( s\right) \,ds\leq
\int\nolimits_{0}^{t}f^{\ast }\left( s\right) \,ds,\;\;t>0
\end{equation*}%
(where $h^{\ast }$ denotes the nonincreasing left-continuous rearrangement
of $|h|$), then $g\in X$ and $\left\Vert g\right\Vert _{X}\leq \left\Vert
f\right\Vert _{X}.$ Peetre \cite{PeetreJ1963Nouv,PeetreJ1968brasilia} had
proved (cf.~also a similar result due independently to Oklander \cite%
{OklanderE1964}, and cf. also \cite[ pp.~158--159]{KreeP1968}) that the
functional $\int\nolimits_{0}^{t}f^{\ast }\left( s\right) \,ds$ is in fact
the $K$-functional of the function $f\in L^{1}+L^{\infty }$ for the couple $%
\left( L^{1},L^{\infty }\right) $. Thus, the results of \cite{CAL,Mit}
naturally suggested that analogous results for couples other than $\left(
L^{1},L^{\infty }\right) $ might be expressed in terms of the $K$-functional
for those couples. To mention at least some of the papers, devoted to
investigations of this kind, we refer to \cite{LSh}, ($\left(
L^{p},L^{\infty }\right) ,1<p<\infty $), \cite{SaSe71} (couples of weighted $%
L^{1}$-spaces), \cite{Dmi75} (relative interpolation of couples $\left(
L^{1}(w_{0}),L^{1}(w_{1})\right) $ and $\left( L^{1},L^{\infty }\right) $ ), 
\cite{PeetreJ1971x} (relative interpolation of an \textit{arbitrary } Banach
couple with a couple of weighted $L^{\infty }$-spaces) (see also \cite[%
p.~589, Theorem 4.4.16]{BK91} or \cite[p.~29, Theorem 4.1]{CwPe81} for this
result), \cite{SP78} (couples of weighted $L^{p}$-spaces, $1\leq p\leq
\infty $), and \cite{Kal93} (couples of rearrangement invariant spaces).

\vskip0.2cm

It is worth to note that, for an arbitrary Banach couple, the uniform $K$%
-monotone interpolation spaces, which are closely related to the 
Calder\'{o}n-Mityagin property (all definitions see in Section \ref{Prel} below) can
also be described in a more concrete way. This important fact is due to
Brudnyi and Kruglyak \cite{BK91} and follows from their proof (see \cite[%
pp.~503-504]{BK91}) of a conjecture due to S. G. Krein \cite{DiKrOv77}. One
of its consequences is that, for every Banach couple $(X_{0},X_{1})$ with
the Calder\'{o}n-Mityagin property, the family $Int(X_{0},X_{1})$ of all its
interpolation Banach spaces can be parameterized by the set of Banach
function lattices on $(0,\infty )$, so-called $K$\textit{-method parameters}%
. The proof of this fundamental fact is based on using the so-called $K$%
-divisibility property of Banach couples.

\vskip0.2cm

Almost all of the results listed above were obtained for couples of \textit{%
Banach}{} spaces (as rare exceptions, we mention that in \cite{SP78} Sparr
was in fact also able to treat couples of weighted $L^{p}$ spaces for $p\in
(0,\infty )$ under suitable hypotheses, and then Cwikel \cite{Cwikel1}
considered the couple $\left( l^{p},l^{\infty }\right) $ also for $p$ in
this extended range). At the same time, new questions have recently arisen
(see, for instance, \cite{LSZ17a}, \cite{CN17}) that require analogous
results for more general situations, say, for quasi-Banach spaces. The
extension of the basic concepts and constructions of interpolation theory to
the latter setting (and even for couples of quasi-normed Abelian groups) was
initiated long ago by Peetre and Sparr in \cite{PS}. Much more recently in 
\cite{AsCwNi21} it was shown that the couple $\left(l^{p},l^{q}\right) $ has
the Calder\'{o}n-Mityagin property if and only if $q\geq 1$. In contrast to
that, from Theorem 1.1 in \cite{CSZ} (see also \cite[Remark~4.7]{AsCwNi21})
it follows that the couple $(L^{p},L^{q})$ of measurable functions on the
semi-axis $(0,\infty )$ with the Lebesgue measure has the Calder\'{o}n-Mityagin
property for all $0<p<q\leq \infty $. Observe also that some other partial
results for the latter couples in the non-Banach case were obtained somewhat
earlier in \cite{CN17,LSZ17a,Ast-20,Cad}.

\vskip0.2cm

The main aim of this paper is to develop a general approach which allows to
extend the basics of Brudnyi-Kruglyak theory to the realm of quasi-Banach
lattices. First, we obtain a refined version of the $K$-divisibility
property for Banach lattice couples and then prove the appropriate version
of this property for $L$-convex quasi-Banach lattice couples. Then we show
that all $K$-monotone quasi-Banach lattices with respect to a $L$-convex
quasi-Banach lattice couple have in fact a stronger property of the
so-called $K(p,q)$-monotonicity for some $0<q\leq p\leq 1$, which allows us
to get their description by the real $K$-method. Moreover, if additionally a 
$K$-monotone quasi-Banach lattice with respect to such a couple is $L$%
-convex, then the corresponding parameter can be selected to be also $L$%
-convex.

\vskip0.2cm

Let us describe now the main results of the paper in more detail. In Section %
\ref{Prel}, we give preliminaries with basic definitions and notation. So,
we address some properties of quasi-Banach spaces and lattices which relate
to their convexity, highlighting the class of $L$-convex lattices as it was
defined by Kalton in \cite{Kal84}. In the next section we collect some
auxiliary results, which are apparently known to some extent.

\vskip0.2cm

The central result of Section \ref{Divisibility for Banach} is Theorem \ref%
{K-Divisibility for Lattices}, showing that the important $K$-divisibility
property for Banach lattice couples has additional useful lattice features.
Observe that in the more restricted case of function lattices on a $\sigma $%
-finite measure space a close result can be found in \cite[Theorem~4.1]%
{CwNi03}. In the next section, the last result is used to establish the fact
that every $L$-convex quasi-Banach lattice couple possesses the so-called $p$%
-$K$-divisibility property for some $p>0$, which is a natural substitute of
the usual $K$-divisibility for Banach couples (see Theorem \ref%
{K-divisibility}).

\vskip0.2cm

In Section~\ref{K( p,q)-monotone spaces}, we introduce and study the
property of the so-called $K(p,q)$-monotonicity ($0<q\leq p\leq 1)$ of
intermediate quasi-Banach spaces with respect to quasi-Banach couples. It is
stronger than the usual $K$-monotonicity and closely connected with some
convexity properties of spaces (see Proposition \ref{pr6.8}).

\vskip0.2cm

In Section \ref{description of K-monotone lattices}, we focus on the central
problem considered in this paper: When a quasi-Banach $K$-monotone space $X$
with respect to a quasi-Banach couple $\overline{X}=(X_{0},X_{1})$ is
described by the real $K$-method, i.e., can be represented as $\overline{X}_{E:K}$, where $E$ is a quasi-Banach function lattice on $(0,\infty )$? In
Theorem \ref{th6.2}, we prove that the answer is positive whenever $%
\overline{X}$ is a $L$-convex quasi-Banach lattice couple. Moreover, if $X$
is $L$-convex, parameter $E$ can be chosen to be $L$-convex (see Corollary %
\ref{cor0}). By using the $K$ $(p,q)$-monotonicity property, we obtain here
also a new description of the set of all $K$-monotone quasi-Banach spaces
with respect to an arbitrary $p$-convex quasi-Banach lattice couple.

\vskip0.2cm

Finally, in Section \ref{Applications}, we continue earlier investigations
of the papers \cite{CSZ} and \cite{AsCwNi21}, stating some consequences of
the above results both for sequence $l^{p}$- and function $L^{p}$-spaces, $%
0<p<\infty $, focusing on the non-Banach case. It is worth to note that the
interpolation properties of the couples $\left( L^{p},L^{q}\right) $ (when the underlying measure space is $(0,\infty)$ with the Lebesgue measure) and $\left( l^{p},l^{q}\right) $ are rather different. While the first one is a
uniform Calder\'{o}n-Mityagin couple for all parameters $0<p<q\leq \infty $ \cite%
[Theorem~1.1]{CSZ} (see also \cite[Remark~4.7]{AsCwNi21}), the second
possesses the latter property if and only if $q\geq 1$ \cite[Corollary~5.4]%
{AsCwNi21}. This fact causes a difference in the description of $K$-monotone
quasi-Banach lattices with respect to these couples by the real $K$-method
in Theorems \ref{Th2} and \ref{Th4}. 
%In conclusion, we observe that the existence of non-$L$-convex rearrangement invariant quasi-Banach spaces on $(0,\infty )$ (see e.g. \cite{Kal84}) combining with results, obtained in Section \ref{description of K-monotone lattices}, implies the existence of non-$L$-convex parameters of the real $K$-method, which allows to "generate" new examples of non-$L$ -convex quasi-Banach lattices (Corollary \ref{last}).

\vskip0.2cm

\section{Preliminaries}

\label{Prel}

\subsection{Quasi-Banach spaces and lattices.}

\label{Prel4}

Recall that a (real) quasi-Banach space $X$ is a complete real vector space
whose topology is given by a quasi-norm $x\mapsto \|x\|$ which satisfy the
conditions: $\|x\|>0$ if $x\ne 0$, $\|\alpha x\|=|\alpha|\|x\|$, $\alpha\in%
\mathbb{R}$, $x\in X$, and $\|x_1+x_2\|_X\le C(\|x_1\|+\|x_2\|)$ for some $%
C>0$ and all $x_1,x_2\in X$.

\vskip0.2cm

Let $0<p\leq 1$. A quasi-Banach space $X$ is said to be \textit{$p$-normable}
if for some constant $B>0$ and any $x_{k}\in X$, $k=1,\dots ,n$, we have 
\begin{equation}
\Big\|\sum_{k=1}^{n}x_{k}\Big\|_{X}\leq B\Big(\sum_{k=1}^{n}\Vert x_{k}\Vert
_{X}^{p}\Big)^{1/p}.  \label{equ0}
\end{equation}%
By the Aoki-Rolewicz theorem (see e.g. \cite[Lemma~3.10.1]{BL76}), a
quasi-Banach space $X$ is $p$-normable whenever the constant $C$ in the
quasi-triangle inequality $\|x_1+x_2\|_X\le C(\|x_1\|+\|x_2\|)$ is equal to $2^{1/p-1}$. One can easily re-norm $X$ so that in %
\eqref{equ0} $B=1$.

\vskip0.2cm

Let $X$ be a quasi-Banach space and additionally is a vector lattice such
that $\Vert x\Vert \leq \Vert y\Vert $ whenever $|x|\leq |y|$. Then we say
that $X$ is a quasi-Banach lattice (see \cite{LT79-II}, \cite{MeyNie91}).

\vskip0.2cm

We say that a quasi-Banach lattice $X$ satisfies an \textit{upper $p$%
-estimate} if for some constant $C$ and any pairwise disjoint $x_k\in X$, $%
k=1,\dots,n$ 
\begin{equation*}  \label{equ1}
\Big\|\sum_{k=1}^n x_k\Big\|_X\le C\Big(\sum_{k=1}^n \|x_k\|_X^p\Big)^{1/p}.
\end{equation*}

A quasi-Banach lattice $X$ is said to be \textit{(lattice) $(p,q)$-convex}, $0<q\leq p\leq \infty $, $q<\infty $, if for some constant $M$ and any $x_{k}\in X$, $k=1,\dots ,n$, we have 
\begin{equation}
\Big\|\Big(\sum_{k=1}^{n}|x_{k}|^{p}\Big)^{1/p}\Big\|_{X}\leq M\Big(%
\sum_{k=1}^{n}\Vert x_{k}\Vert _{X}^{q}\Big)^{1/q}  \label{equ2}
\end{equation}%
(with the usual modification when $p=\infty $). By $M^{\left( p,q\right)
}\left( X\right) $ we will denote the minimal $M$ satisfying \eqref{equ2}.
Observe that any element of the form $(\sum_{k=1}^{n}|x_{k}|^{p})^{1/p}$, $%
0<p<\infty $, can be defined by means of a "homogeneous functional calculus"
in the quasi-Banach setting exactly as in the case of Banach lattices (cf. 
\cite[pp.~40-41]{LT79-II}, \cite{Kal84}, \cite{CT-86}).

\vskip0.2cm

In particular, when $p=q>0$, we come to the well-known notion of a $p$%
-convex quasi-Banach lattice (see e.g. \cite{LT79-II} for $p\ge 1$ and \cite%
{Kal84} for $p>0$). We set $M^{\left( p\right)
}\left(X\right):=M^{\left(p,p\right) }\left(X\right)$.

It is clear that for each $0<p\leq 1$ $p$-convexity implies $p$-normability
and $p$-normability implies the existence of an upper $p$-estimate. On the
other hand, if a quasi-Banach lattice is $q$-normable, $0<q\leq 1$, then it
is $(1,q)$-convex (see also \cite[Proposition~1.3]{CT-86}).

\vskip0.2cm

According to \cite[p.~142]{Kal84}, a quasi-Banach lattice $X$ is said to be 
\textit{$L$-convex} if there is $0<\varepsilon <1$ so that if $0\leq x\in X$
with $\left\Vert x\right\Vert _{X}=1$ and $0\leq x_{i}\leq x$, $i=1,\dots ,n$%
, satisfy 
\begin{equation*}
\frac{1}{n}\sum_{i=1}^{n}x_{i}\geq (1-\varepsilon )x,
\end{equation*}%
then 
\begin{equation*}
\max_{i=1,\dots ,n}\left\Vert x_{i}\right\Vert _{X}\geq \varepsilon .
\end{equation*}%
Throughout the paper, we repeatedly use the fact that a quasi-Banach lattice 
$X$ is $L$-convex if and only if it is $r$-convex for some $r>0$ \cite[%
Theorem~2.2]{Kal84}. In particular, this implies that $L$-convexity is
preserved under all equivalent lattice quasi-Banach renormings.

\vskip0.2cm

In the case $1\leq q\leq p<\infty $ the concept of a $(p,q)$-convex Banach
lattice has been introduced by Maurey \cite{Maurey-74}, which showed that
this adds nothing to $p$-convexity, because any $(p,q)$-convex Banach
lattice is also $r$-convex for each $r<q$. In contrast to the Banach case,
there are $(p,q)$-convex quasi-Banach lattices $X$ for some $0<q\leq p$,
which are not $r$-convex for any $r<q$ and hence not $L$-convex. An example
of such a lattice (for $p=\infty $) are the spaces $L^{p}(\phi )$ defined by
Kalton \cite[Example~2.4]{Kal84}.

\vskip0.2cm

Next, we will often use also the so-called $p$-convexification procedure
which just an abstract description of the mapping $f\mapsto |f|^p\mathrm{sign%
}\,f$ from $L_r(\mu)$, $0<r<\infty$, to $L_{rp}(\mu)$. Note that in a
general lattice $X$ there is no meaning to the symbol $x^p$ that makes us to
introduce new algebraic operations in $X$ (see \cite[pp.~53-54]{LT79-II}).

Let $X$ be a quasi-Banach lattice with the algebraic operations denoted by $%
+ $ and $\cdot $ and let $p>0$. For every $x,y\in X$ and $\alpha \in \mathbb{%
R} $ we define 
\begin{equation*}
x\oplus y:=(x^{1/p}+y^{1/p})^{p}\;\;\mbox{and}\;\;\alpha \odot x:=\alpha
^{p}\cdot x,
\end{equation*}%
where $(x^{1/p}+y^{1/p})^{p}$ is the element in $X$ defined according to 
\cite[Theorem~1.d.1]{LT79-II} and $\alpha ^{p}$ is $|\alpha |^{p}\mathrm{sign%
}\,\alpha $. Then, the set $X$, endowed with the operations $\oplus $, $%
\odot $ and the same order as in $X$ is a vector lattice, which denoted by $%
X^{(p)}$. Moreover, $|||x|||_{X^{(p)}}:=\Vert x\Vert _{X}^{1/p}$ is a
lattice norm on $X^{(p)}$ and $(X^{(p)},|||\cdot|||_{X^{(p)}})$ is a
quasi-Banach lattice for every $0<p<\infty$ \cite[Proposition~1.2]{CT-86}.
One can easily check also that if $X$ is $r$-convex with the constant $%
M^{(r)}$, then $X^{(p)}$ is $pr$-convex with the same constant.

\vskip0.2cm

Note that if $X$ is a quasi-Banach function lattice, $X^{(p)}$ can be
identified with the space of functions $f$ such that $f^{p}:=|f|^{p}\mathrm{%
sign}\,f\in X$ equipped with the norm $|||f|||=\Vert \,|f|^{p}\,\Vert ^{1/p}$%
.

\vskip0.2cm

The simplest family of $p$-normable function spaces is formed by $L^{p}$%
-spaces, $0<p\le\infty $. As usual, when the underlying measure space is $%
(0,\infty)$ with the Lebesgue measure, they are equipped with the
quasi-norms 
\begin{equation*}
\Vert f\Vert_{{p}}:=\Big(\int_{0}^{\infty}|f(t)|^{p}\,dt\Big)^{1/p}\;\;%
\mbox{if}\;\;p<\infty,\;\;\mbox{and}\;\;\Vert f\Vert_{{\infty}}:=\mathrm{%
ess\, sup}_{t>0}|f(t)|.
\end{equation*}

Similarly, $l^{p}$-spaces, $0<p\le\infty$, of sequences $x=(x_k)_{k=1}^%
\infty $ are equipped with the quasi-norms 
\begin{equation*}
\Vert x\Vert _{l^{p}}:=\Big(\sum_{k=1}^{\infty }|x_{k}|^{p}\Big)^{1/p}\;\;%
\mbox{if}\;\;p<\infty,\;\;\mbox{and}\;\;\Vert x\Vert _{l^{\infty
}}:=\sup_{k=1,2,\dots }|x_{k}|.
\end{equation*}

For every $0<p\le\infty $ we have 
\begin{equation*}
\Vert f+g\Vert _{{p}}\leq \max (1,2^{(1-p)/p})\left( \Vert f\Vert _{{p}%
}+\Vert g\Vert _{{p}}\right),\;\;f,g\in L^{p},
\end{equation*}%
\begin{equation*}
\Vert x+y\Vert _{l^{p}}\leq \max (1,2^{(1-p)/p})\left( \Vert x\Vert
_{l^{p}}+\Vert y\Vert _{l^{p}}\right), x,y\in l^{p}
\end{equation*}%
(see e.g. \cite[Lemma~3.10.3]{BL76}). Moreover, both spaces $L^{p}$ and $%
l^{p}$ are $p$-convex for $0<p<\infty$ with constant $1$ and hence they are
Banach spaces if $1\le p\le\infty$, and $L$-convex quasi-Banach spaces if $%
0<p<1$.

\subsection{Interpolation of quasi-Banach spaces}

\label{Prel1}

Let us recall some basic constructions and definitions related to the
interpolation theory of operators. For more detailed information we refer to 
\cite{BL76,BK91,BSh,KPS82,Ovc84}.

\smallskip {} \vskip0.2cm

In this paper we are mainly concerned with interpolation within the class of
quasi-Banach spaces, while the linear bounded operators are considered as
the corresponding morphisms. A pair $\overline{X}=(X_{0},X_{1})$ of
quasi-Banach spaces is called a \textit{\ quasi-Banach couple} if $X_{0}$
and $X_{1}$ are both linearly and continuously embedded in some Hausdorff
linear topological space.

\vskip0.2cm

For each quasi-Banach couple $\overline{X}=(X_{0},X_{1})$ we define the 
\textit{\ intersection} $\,\Delta \left( \overline{X}\right) =X_{0}\cap
X_{1} $ and the \textit{sum} $\Sigma \left( \overline{X}\right) =X_{0}+X_{1}$
as the quasi-Banach spaces equipped with the quasi-norms 
\begin{equation*}
\Vert x\Vert _{\Delta \left( \overline{X}\right) }:=\max \left\{ \Vert
x\Vert _{X_{0}}\,,\,\Vert x\Vert _{X_{1}}\right\}
\end{equation*}%
and 
\begin{equation*}
\Vert x\Vert _{\Sigma \left( \overline{X}\right) }:=\inf \left\{ \Vert
x_{0}\Vert _{X_{0}}\,+\,\Vert x_{1}\Vert _{X_{1}}:\,x=x_{0}+x_{1},x_{i}\in
X_{i},i=0,1\right\} ,
\end{equation*}%
respectively.

\vskip0.2cm

A quasi-Banach space $X$ is called \textit{intermediate} with respect to a
quasi-Banach couple $\overline{X}$ if $\Delta \left( \overline{X}\right)
\subset X\subset \Sigma \left( \overline{X}\right) $ continuously. We will
say that $X$ is a \textit{normalized intermediate space} if both inclusions
have norm one. The set of intermediate spaces is denoted with $I\left( 
\overline{X}\right) $ or $I\left( X_{0},X_{1}\right) $.

\vskip0.2cm

If $\overline{X}=(X_{0},X_{1})$ and $\overline{Y}=(Y_{0},Y_{1})$ are
quasi-Banach couples, then we let $\mathfrak{L}\left( \overline{X},\overline{%
Y}\right) $ denote the space of all linear operators $T:\,\Sigma \left( 
\overline{X}\right) \rightarrow \Sigma \left( \overline{Y}\right) $ that are
bounded from $X_{i}$ in $Y_{i}$, $i=0,1$, equipped with the quasi-norm 
\begin{equation*}
{\Vert T\Vert }_{\mathfrak{L}\left( \overline{X},\overline{Y}\right)
}:=\max\limits_{i=0,1}{\ \Vert T\Vert }_{X_{i}\rightarrow Y_{i}}.
\end{equation*}%
In the case when $X_{i}=Y_{i}$, $i=0,1$, we simply write $\mathfrak{L}\left( 
\overline{X}\right) $ or $\mathfrak{L}(X_{0},X_{1})$.

\vskip0.2cm

Let $\overline{X}=(X_{0},X_{1}),\overline{Y}=\left( Y_{0},Y_{1}\right) $ be
two quasi-Banach couples and let $X\in I\left( \overline{X}\right)$, $Y\in I\left( \overline{Y}\right) $. Then, $X,\,Y$ are said to be \textit{relative interpolation spaces}
with respect to the couples $\overline{X},\overline{Y},$ if every operator $T%
{\in \mathfrak{\overline{L}}\left( \overline{X},\overline{Y}\right) }$ is
bounded from $X$ into $Y.$ If $\overline{X}=\overline{Y}$ and $X=Y,$ then $X$
is an \textit{interpolation space}\emph{\ }with respect to $\overline{X}.$
The collection of all such spaces will be denoted by $Int\left( \overline{X}%
\right) $ or $Int\left( X_{0},X_{1}\right) .$

\vskip0.2cm

By the above-mentioned Aoki-Rolewicz theorem (see also \cite[Lemma~3.10.1]%
{BL76}), every quasi-Banach space is a $F$-space (i.e., the topology in that
space is generated by a complete invariant metric). In particular, this
applies to the space ${\mathfrak{L}}\left( \overline{X},\overline{Y}\right)$
which is obviously a quasi-Banach space with respect to the quasi-norm $%
T\mapsto {\Vert T\Vert }_{\mathfrak{L}\left( \overline{X},\overline{Y}%
\right)}$ and also with respect to the quasi-norm $T\mapsto \max({\Vert
T\Vert }_{\mathfrak{L}\left( \overline{X},\overline{Y}\right)},{\Vert T\Vert 
}_{X\to Y})$ whenever the quasi-Banach spaces $X,\,Y$ are relative
interpolation spaces with respect to the quasi-Banach couples $\overline{X},%
\overline{Y}$. As is well known (see e.g. \cite[Theorem~2.2.15]{Rud}), the
Closed Graph Theorem (see e.g. \cite[Corollary 2.2.12]{Rud}) holds for $F$-spaces. Therefore, exactly the same reasoning as required for the Banach case (see Theorem 2.4.2 of \cite[p.~28]{BL76}) shows that in this case there exists a constant $C>0$ such
that for every $T\in \mathfrak{\overline{L}}\left( \overline{X},\overline{Y}\right) $ we have ${\Vert T\Vert }_{X\rightarrow Y}\leq C{\Vert T\Vert }_{\mathfrak{\overline{L}}\left( \overline{X},\overline{Y}\right) }.$ The least
constant $C$, satisfying the last inequality for all such $T$, is called the 
\textit{interpolation constant} of the spaces $X\,,Y$ with respect to the
couples $\overline{X},\overline{Y}$. If $C=1$, then $X,\,Y$ are called 
\textit{exact relative interpolation spaces} with respect to the couples $\overline{X},\overline{Y}$ (in the case when $\overline{X}=\overline{Y}$ and 
$X=Y, $ we say that $X$ is an \textit{exact interpolation space} with
respect to $\overline{X}$).

\vskip0.2cm

One of the most important ways of constructing interpolation spaces is based
on use of the \textit{Peetre {$K$}-functional}, which is defined for an
arbitrary quasi-Banach couple $\overline{X}=(X_{0},X_{1})$, for every $x\in
\Sigma \left( \overline{X}\right) $ and each $t>0$, as follows: 
\begin{equation}
{K}(t,x;\overline{X}):=\inf
\{||x_{0}||_{X_{0}}+t||x_{1}||_{X_{1}}:\,x=x_{0}+x_{1},x_{i}\in {X_{i},i=0,1}%
\}.
\end{equation}%
One can easily show that for a fixed $x\in \Sigma \left( \overline{X}\right) 
$ the function $t\mapsto ${$K$}$(t,x;\overline{X})$ is continuous,
nondecreasing, concave and non-negative on $\left( 0,\infty \right) $ \cite[%
Lemma~3.1.1]{BL76}.

\vskip0.2cm

Let $X,Y$ be intermediate quasi-Banach spaces with respect to quasi-Banach
couples $\overline{X}=\left( X_{0},X_{1}\right) $ and $\overline{Y}=\left(
Y_{0},Y_{1}\right) $, respectively. Then, $X,Y$ are said to be \textit{%
relative {$K$}-monotone spaces} with respect to these couples if whenever
elements $x\in X$ and $y\in \Sigma \left( \overline{Y}\right) $ satisfy 
\begin{equation*}
{K}\left( t,y;\overline{Y}\right) \leq {K}\left( t,x;\overline{X}\right) ,\;%
\text{for all\thinspace }\;t>0,
\end{equation*}%
it follows that $y\in Y$. If additionally $\left\Vert y\right\Vert _{Y}\leq
C\left\Vert x\right\Vert _{X}$, for a constant $C$ which does not depend on $%
x$ and $y$, then we say that $X,Y$ are \textit{uniform relative }$K-$\textit{%
monotone} spaces with respect to the couples $\overline{X}$ and $\overline{Y}
$. The infimum of all constants $C$ with this property is referred as the 
\textit{{$K$}-monotonicity constant }of $X$ and $Y$. Clearly, each pair of
uniform relative {$K$}${\mathcal{-}}$monotone spaces with respect to the
couples $\overline{X}$, $\overline{Y}$ are relative interpolation spaces
between $\overline{X}$ and $\overline{Y}.$ In the special case when $%
\overline{Y}=\overline{X}$ and $Y=X$, $X$ is called a \textit{(uniform) $K-$%
monotone space} with respect to $\overline{X}.$ The collection of all
uniformly $K-$monotone spaces with respect to the couple $\overline{X}$ is
denoted by $Int^{KM}\left( \overline{X}\right) $ or $Int^{KM}\left(
X_{0},X_{1}\right) .$

\vskip0.2cm

Assume that $E$ is a quasi-Banach function lattice (with respect to the
usual Lebesgue a.e. order) on $(0,\infty )$ and $w$ is a measurable function
on $(0,\infty )$. Then, $E(w)$ is the weighted quasi-Banach function lattice
with the norm $\Vert x\Vert _{E(w)}:=\Vert xw\Vert _{E}$. In what follows, $%
\overline{L^{\infty }}$ is the couple $\left( L^{\infty },L^{\infty
}(1/t)\right) $.

\vskip0.2cm

Let $\overline{X}$ be a quasi-Banach couple and let $E$ be an intermediate
quasi-Banach function lattice with respect to the couple $\overline{%
L^{\infty }}$. The space \textit{of the real $K$-method} (or the \textit{$K$%
-space}) $\overline{X}_{E:K}$ is defined as the set of all $x\in \Sigma
\left( \overline{X}\right) $ such that $K\left( \cdot ,x;\overline{X}\right)
\in E$. It is equipped with the quasi-norm $\left\Vert x\right\Vert
=\left\Vert K\left( \cdot ,x;\overline{X}\right) \right\Vert _{E}$ (see for details \cite{BK91} or \cite{Nil82}). Without
loss of generality, we can assume that the parameter $E$ is an interpolation
space with respect to the couple $\overline{L^{\infty }}$ \cite[%
Corollary~3.3.6]{BK91}. The collection of all $K$-spaces for the couple $%
\overline{X}$ will be denoted by $Int^{K}\left( \overline{X}\right) $ or
otherwise by $Int^{K}\left( X_{0},X_{1}\right) $.

In particular, if $E=L^{p}\left( t^{-\theta },\frac{dt}{t}\right) $, where $0<\theta <1,0<p\leq \infty $, we get the classical Lions-Peetre $K$-spaces $\overline{X}_{\theta ,p}$, i.e., 
\begin{equation*}
\Vert x\Vert _{\overline{X}_{\theta ,p}}:=\left( \int_{0}^{\infty }\left(
K\left( t,x;\overline{X}\right) t^{-\theta }\right) ^{p}\,\frac{dt}{t}%
\right) ^{1/p}
\end{equation*}%
(with usual modification if $p=\infty $), see \cite{BL76}.

\vskip0.2cm

If $\overline{X}=(X_{0},X_{1})$ is a quasi-Banach couple and $X\in I\left(\overline{X}\right)$, then the \textit{relative
closure} $X^{c}$ of $X$ consists of all $x\in \Sigma \left( \overline{X}%
\right) $ for which there exists a bounded sequence $\left\{ x_{n}\right\}
\subset X,$ converging to $x$ in $\Sigma \left( \overline{X}\right) .$ The
norm in $X^{c}$ is defined as the infimum of all bounds of such sequences in $%
X $. A space $X$ is (isometrically) \textit{relatively closed} in $\Sigma \left( \overline{X}\right) $ whenever $X^{c}=X$ isometrically. A quasi-Banach couple is said to
be \textit{mutually closed }if both spaces $X_{0}$ and $X_{1}$ are
relatively closed in $\Sigma \left( \overline{X}\right) $. Note that $%
K\left( t,x;\overline{X}\right) =K\left( t,x;\overline{X^{c}}\right) $ for
all $x\in \Sigma \left( \overline{X}\right) $ and $t>0$, where $\overline{%
X^{c}}=\left( X_{0}^{c},X_{1}^{c}\right) $ (see, for instance,\cite[Lemma 2]{Cwikel0} or \cite[formula (3.5), p.~385]{Ovc84}).

\vskip0.2cm

If $\overline{X}=\left( X_{0},X_{1}\right) $ and $\overline{Y}=\left(
Y_{0},Y_{1}\right) $ are two quasi-Banach couples, then the $\mathit{\ }$%
\textit{$K$}$\mathit{-orbit}$ ${{\mathrm{Orb}}}_{\overline{X}}^{K}\left( x;%
\overline{Y}\right) $ of an element $x\in \Sigma \left( \overline{X}\right) $%
, $x\neq 0$, in the couple $\overline{Y}=\left( Y_{0},Y_{1}\right) $ is the
space of all $y\in \Sigma \left( \overline{Y}\right) $ such that the
following quasi-norm 
\begin{equation*}
\left\Vert y\right\Vert _{{{\mathrm{Orb}}}^{K}(x)}:=\sup_{t>0}\frac{{K}%
\left( t,y;\overline{Y}\right) }{{K}\left( t,x;\overline{X}\right) }
\end{equation*}%
is finite. If $\overline{X}=\overline{Y},$ we write simply ${{\mathrm{Orb}}}%
^{K}\left( x;\overline{X}\right) $. One can easily check that each $K$-orbit
of an element $x\in \Sigma \left( \overline{X}\right) $, $x\neq 0$, is a
quasi-normed interpolation space between $X_{0}$ and $X_{1}$.

\vskip0.2cm

Two quasi-Banach couples $\overline{X}=\left( X_{0},X_{1}\right) $ and $%
\overline{Y}=\left( Y_{0},Y_{1}\right) $ are said to be \textit{relative
Calder\'{o}n-Mityagin couples} (or to have the \textit{relative 
Calder\'{o}n-Mityagin property}) if for every $x\in \Sigma \left( \overline{X}\right) $
and $y\in \Sigma \left( \overline{Y}\right) $ satisfying 
\begin{equation*}
K\left( t,y;\overline{Y}\right) \leq K\left( t,x;\overline{X}\right)
,\;\;t>0,
\end{equation*}%
there exists an operator $T\in \mathfrak{L}\left( \overline{X},\overline{Y}%
\right) $ such that $y=Tx$. If additionally we can choose $T\in \mathfrak{L}%
\left( \overline{X},\overline{Y}\right) $ so that $\Vert T\Vert _{\mathfrak{L%
}\left( \overline{X},\overline{Y}\right) }\leq C$, where $C$ is independent
of $x$ and $y$, then $\overline{X}$ and $\overline{Y}$ are called \textit{%
uniform relative Calder\'{o}n-Mityagin couples} (or we say that the above
couples have the \textit{uniform relative Calder\'{o}n-Mityagin property)}. In
the case when $\overline{X}=\overline{Y}$, any couple $\overline{X}$
satisfying the above property is called a \textit{(uniform) 
Calder\'{o}n-Mityagin couple}. See \cite{BK91} for more details.

\vskip0.2cm

The fact that $\overline{X}$ is a Calder\'{o}n-Mityagin couple obviously implies
that every interpolation space with respect to $\overline{X}$ is a $K%
\mathcal{-}$monotone space. Furthermore, if $\overline{X}$ is a uniform
Calder\'{o}n-Mityagin couple, then every interpolation space $X$ with respect to 
$\overline{X}$ is a uniform $K$-monotone space.

\subsection{Quasi-Banach lattice couples}

\label{Prel2}

Suppose $X_{0}$ and $X_{1}$ are two quasi-Banach lattices. We will say that $%
\overline{X}=(X_{0},X_{1})$ is a \textit{quasi-Banach lattice couple} if
there exists a Hausdorff topological vector lattice $\mathcal{H}$ such that
both $X_{0}$ and $X_{1}$ are embedded into $\mathcal{H}$ via a continuous,
interval preserving, lattice homeomorphism (see e.g. \cite{RaTr16}). Then,
one can easily check that the intersection $\Delta \left( \overline{X}%
\right) $ and the sum $\Sigma \left( \overline{X}\right) $ are quasi-Banach
lattices.

\vskip0.2cm

Suppose that a quasi-Banach lattice $X$ is intermediate with respect to a
quasi-Banach lattice couple $\overline{X}=(X_{0},X_{1})$ as a quasi-Banach
space. We will say that $X$ is an \textit{intermediate quasi-Banach lattice}
with respect to $\overline{X}$ if the canonical embeddings $I_{\Delta
}:\,\Delta \left( \overline{X}\right) \rightarrow X$ and $I_{\Sigma
}:\,X\rightarrow \Sigma \left( \overline{X}\right) $ are continuous,
interval preserving, lattice homeomorphisms (see \cite{BK91} and \cite%
{RaTr16}). \textbf{\ }By $I\left( \overline{X}\right) $ we will denote in
this case the set of all intermediate quasi-Banach lattices with respect to $%
\overline{X}$\textbf{.}

\vskip0.2cm

Observe that every $K$-monotone quasi-Banach lattice with respect to a
quasi-Banach lattice couple is, in fact, uniform $K$-monotone. This can be
proved by a simple modification of the arguments for the Banach lattice case
used in \cite[Theorem 6.1]{CwNi03}.

\vskip0.2cm

A quasi-Banach couple $\overline{X}=(X_{0},X_{1})$ is said to be \textit{$p$%
-normable} if both spaces $X_{0}$ and $X_{1}$ are $p$-normable. Similarly,
we will say that a quasi-Banach lattice couple $\overline{X}=(X_{0},X_{1})$
is \textit{$p$-convex (resp. $L$-convex)} if both spaces $X_{0}$ and $X_{1}$
are $p$-convex (resp. $L$-convex).

\vskip0.2cm

\subsection{The cone $Conv$ and the set of $K$-functionals of a couple.}

\label{Prel3}

Let $Conv$ denote the cone of all concave and nondecreasing functions $f$ defined on the half-line $[0,\infty )$ such that $f(0)=0$. Each $f\in Conv$ is a {\it quasi-concave} function, i.e., $f(t)$ is nondecreasing and  $t\mapsto f\left( t\right) /t$ is a nonincreasing function. It follows that $f\left( t\right) \leq \max \left(
1,t/s\right) f\left( s\right) $, and thus $Conv$ is a subset of the sum $%
\Sigma \left( \overline{L^{\infty }}\right) .$ Clearly, $\left\Vert
f\right\Vert _{\Sigma \left( \overline{L^{\infty }}\right) }=f\left(
1\right) .$ Moreover, one can easily check that, for each function $h\in
\Sigma (\overline{L^{\infty }})$, the $K$-functional $K\left( \cdot ,h;%
\overline{L^{\infty }}\right) $ is the least concave majorant of $|h|$ on $%
(0,\infty )$ \cite{Dm-74} (see also \cite[Proposition 3.1.17]{BK91}).

\vskip0.2cm

Note that, for every quasi-Banach couple $\overline{X}$ and any $x\in \Sigma
\left( \overline{X}\right) $, the $K$-functional $K\left( \cdot ,x;\overline{%
X}\right) $ belongs to the cone $Conv$. If conversely for every $f\in Conv$
there exists $x\in \Sigma \left( \overline{X}\right) $ such that $K\left(t,x;%
\overline{X}\right) \cong f(t)$, $t>0$, with equivalence constants
independent of $f$ (see \cite[Definition~4.4.8]{BK91}), then the
quasi-Banach couple $\overline{X}$ is called $Conv$-\textit{abundant}%
\footnote{%
This property is referred sometimes as the $K$-surjectivity of a couple, see
e.g. \cite[p. 217]{Nil83}.}. In particular, the couple $\overline{L^{\infty }%
}$ has the latter property.

\vskip0.2cm

By $Conv^{0}$ we will denote the subset of $Conv$ consisting of all elements 
$f$ with $\lim_{t\rightarrow 0}f\left( t\right)=\lim_{t\rightarrow \infty
}f\left( t\right) /t=0$.

\vskip0.2cm

The notation $A \preceq B$ means the existence of a positive constant $C$
with $A \leq C\cdot B$ for all applicable values of the arguments
(parameters) of the functions (expressions) $A$ and $B$. We will write $A
\cong B$ if $A \preceq B$ and $B \preceq A$.

\section{Auxiliary results}

\label{Aux}

\begin{proposition}
\label{renorming} Let $X$ be a $p$-convex quasi-Banach lattice, $p\in (0,1)$%
. Then the $1/p$-convexification $X^{\left( 1/p\right) }$ of $X$ has an
equivalent lattice norm and hence $X^{\left( 1/p\right) }$ is lattice
isomorphic to a Banach lattice.

In particular, if $X$ be a $L$-convex quasi-Banach lattice, then there is $%
p>0$ such that $X^{\left( p\right) }$ has an equivalent lattice norm.
\end{proposition}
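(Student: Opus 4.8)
The plan is to reduce both assertions to a single renorming lemma: \emph{every $1$-convex quasi-Banach lattice admits an equivalent lattice norm and is therefore lattice isomorphic to a Banach lattice.} Granting this, the first statement follows at once from the convexification rule recalled in Section~\ref{Prel}: since $X^{(q)}$ is $pq$-convex whenever $X$ is $p$-convex, the choice $q=1/p$ makes $X^{(1/p)}$ a $1$-convex quasi-Banach lattice, to which the lemma applies. For the ``In particular'' part I would invoke the fact (recalled above) that a quasi-Banach lattice is $L$-convex if and only if it is $r$-convex for some $r>0$; taking $p=1/r$ then makes $X^{(p)}$ exactly $1$-convex, and the lemma again applies.

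It remains to prove the renorming lemma for a $1$-convex quasi-Banach lattice $Y$ (in our applications $Y=X^{(1/p)}$, with quasi-norm $|||\cdot|||_Y$ and $1$-convexity constant $M:=M^{(1)}(Y)$). On $Y$ I would define
\begin{equation*}
N(y):=\inf\Big\{\sum_{i=1}^{n}|||y_i|||_Y:\ |y|\le \sum_{i=1}^{n}|y_i|,\ y_i\in Y,\ n\in\mathbb{N}\Big\}.
\end{equation*}
The infimum is over a nonempty set (take $n=1$, $y_1=y$), so $N$ is finite, and indeed $N(y)\le|||y|||_Y$. One then checks directly that $N$ is positively homogeneous and subadditive: if $|y|\le\sum_i|y_i|$ and $|z|\le\sum_j|z_j|$, then $|y+z|\le\sum_i|y_i|+\sum_j|z_j|$, whence $N(y+z)\le N(y)+N(z)$. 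Monotonicity with respect to the lattice order is immediate, since any admissible representation of $z$ with $|y|\le|z|$ is also admissible for $y$; thus $N$ is a lattice seminorm dominated by $|||\cdot|||_Y$.

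The equivalence with $|||\cdot|||_Y$ is where $1$-convexity enters, and is the crux of the argument. For the reverse estimate, if $|y|\le\sum_{i=1}^{n}|y_i|$ then by lattice monotonicity of the quasi-norm followed by $1$-convexity,
\begin{equation*}
|||y|||_Y\le \Big|\Big|\Big|\sum_{i=1}^{n}|y_i|\Big|\Big|\Big|_Y\le M\sum_{i=1}^{n}|||y_i|||_Y,
\end{equation*}
and taking the infimum over all such representations gives $|||y|||_Y\le M\,N(y)$. In particular $N(y)=0$ forces $y=0$, so $N$ is an equivalent lattice norm satisfying $M^{-1}|||y|||_Y\le N(y)\le|||y|||_Y$. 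Since equivalent (quasi-)norms generate the same topology and $(Y,|||\cdot|||_Y)$ is complete, $(Y,N)$ is a complete normed lattice, i.e.\ a Banach lattice; this proves the lemma and hence the proposition.

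I would emphasise that the only genuinely nontrivial point is the lower equivalence bound: it uses full $1$-convexity, i.e.\ control of $|||\sum_i|y_i||||_Y$ for \emph{arbitrary}, not necessarily disjoint, elements $y_i$, and a mere upper $1$-estimate would not suffice for this step. Everything else---homogeneity, subadditivity, lattice monotonicity of $N$, and the transfer of completeness across equivalent norms---is routine, so I expect no serious obstacle beyond identifying the correct renorming functional and tracking where $1$-convexity is actually invoked.
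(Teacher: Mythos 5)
Your proposal is correct and follows essentially the same route as the paper: the paper likewise observes that the $(1/p)$-convexification satisfies the $1$-convexity (triangle-type) inequality with constant $(M^{(p)})^{p}$ and then defines exactly the same infimum functional $\inf\{\sum_i\Vert x_i\Vert:\ |x|\le\sum_i\oplus|x_i|\}$ as the equivalent lattice norm, deducing the $L$-convex case from Kalton's theorem that $L$-convexity is equivalent to $r$-convexity for some $r>0$. Your write-up merely supplies the routine verifications the paper leaves to "a straightforward inspection," so no further comparison is needed.
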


\begin{proof}
Let $M^{(p)}$ be the $p$-convexity constant of $X$. Hence, if $x_{i}\in
X^{\left( 1/p\right) }$, $i=1,2\dots ,n$, then from definition of the $(1/p)$%
-convexification $X^{\left( 1/p\right) }$ (see Section \ref{Prel4}) it
follows 
\begin{equation*}
\left\Vert \sum_{i=1}^{n}\oplus x_{i}\right\Vert _{X^{\left( 1/p\right)
}}\leq (M^{(p)})^p\sum_{i=1}^{n}\left\Vert x_{i}\right\Vert
_{X^{\left(1/p\right) }}.
\end{equation*}

Now, a straightforward inspection shows that the functional 
\begin{equation*}
\left\Vert \left\Vert x\right\Vert \right\Vert :=\inf \left\{
\sum_{i=1}^{n}\left\Vert x_{i}\right\Vert _{X^{\left( 1/p\right)
}}:\left\vert x\right\vert \leq \sum_{i=1}^{n}\oplus \left\vert
x_{i}\right\vert ,x_{i}\in X^{\left( 1/p\right) }\right\}
\end{equation*}%
is a lattice norm on the space $X^{\left( 1/p\right) }$, which is equivalent
to the original quasi-norm.

The second assertion follows now immediately because every $L$-convex
quasi-Banach lattice is $p$-convex for some $p>0$ \cite[Theorem~2.2]{Kal84}.
\end{proof}

\begin{lemma}
Let $\overline{X}=\left( X_{0},X_{1}\right) $ be a $L$-convex quasi-Banach
lattice couple. Then, $\Delta \left( \overline{X}\right) $ and $\Sigma\left( 
\overline{X}\right) $ are $L$-convex quasi-Banach lattices.
\end{lemma}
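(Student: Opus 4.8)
The plan is to reduce everything to showing that $\Delta(\overline{X})$ and $\Sigma(\overline{X})$ are $p$-convex for a suitable exponent $p\in(0,1]$, after which Kalton's criterion \cite[Theorem~2.2]{Kal84} immediately yields $L$-convexity. Since $\overline{X}$ is $L$-convex, by that same criterion each $X_i$ is $r_i$-convex for some $r_i>0$; recalling that $p$-convexity implies $q$-convexity for every $0<q\le p$ (as in the Banach case, cf.~\cite{LT79-II}), I would fix a single exponent $p:=\min(r_0,r_1,1)\in(0,1]$ for which both $X_0$ and $X_1$ are $p$-convex, with constants $M_0:=M^{(p)}(X_0)$ and $M_1:=M^{(p)}(X_1)$. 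Throughout one uses that $\Delta(\overline{X})$ and $\Sigma(\overline{X})$ are themselves quasi-Banach lattices, so their quasi-norms are monotone with respect to the order, and that the homogeneous functional calculus producing the elements $(\sum_k|x_k|^p)^{1/p}$ is available in each lattice.

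The intersection is the easy case. Given $x_1,\dots,x_n\in\Delta(\overline{X})$, since $\|\cdot\|_{\Delta}=\max(\|\cdot\|_{X_0},\|\cdot\|_{X_1})$ and $\|x_k\|_{X_i}\le\|x_k\|_{\Delta}$, applying $p$-convexity in each $X_i$ gives
\[
\Big\|\Big(\sum_{k=1}^{n}|x_k|^p\Big)^{1/p}\Big\|_{\Delta}=\max_{i=0,1}\Big\|\Big(\sum_{k=1}^{n}|x_k|^p\Big)^{1/p}\Big\|_{X_i}\le\max(M_0,M_1)\Big(\sum_{k=1}^{n}\|x_k\|_{\Delta}^p\Big)^{1/p},
\]
so $\Delta(\overline{X})$ is $p$-convex.

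The sum will be the main obstacle, because one must decompose the positive element $(\sum_k|x_k|^p)^{1/p}$ in a way that is compatible with both lattices simultaneously. Given $x_1,\dots,x_n\in\Sigma(\overline{X})$ and $\varepsilon>0$, I would choose decompositions $x_k=x_k^0+x_k^1$ with $x_k^i\in X_i$ and $\|x_k^0\|_{X_0}+\|x_k^1\|_{X_1}\le\|x_k\|_{\Sigma}+\varepsilon$, and set $u_i:=(\sum_k|x_k^i|^p)^{1/p}\in X_i$. Using $|x_k|\le|x_k^0|+|x_k^1|$ together with the scalar inequality $(a+b)^p\le a^p+b^p$ (valid since $p\le1$) inside the functional calculus, one gets $\sum_k|x_k|^p\le u_0^p+u_1^p$, and then the scalar estimate $(a^p+b^p)^{1/p}\le 2^{1/p-1}(a+b)$ yields the pointwise majorization
\[
\Big(\sum_{k=1}^{n}|x_k|^p\Big)^{1/p}\le 2^{1/p-1}(u_0+u_1).
\]
Since the right-hand side is a concrete decomposition in $X_0+X_1$ and the $\Sigma$-quasi-norm is monotone, this gives $\|(\sum_k|x_k|^p)^{1/p}\|_{\Sigma}\le 2^{1/p-1}(\|u_0\|_{X_0}+\|u_1\|_{X_1})$. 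Bounding each $\|u_i\|_{X_i}\le M_i(\sum_k\|x_k^i\|_{X_i}^p)^{1/p}$ by $p$-convexity of $X_i$, using $\|x_k^i\|_{X_i}\le\|x_k\|_{\Sigma}+\varepsilon$, and finally letting $\varepsilon\to0$, one obtains
\[
\Big\|\Big(\sum_{k=1}^{n}|x_k|^p\Big)^{1/p}\Big\|_{\Sigma}\le 2^{1/p-1}(M_0+M_1)\Big(\sum_{k=1}^{n}\|x_k\|_{\Sigma}^p\Big)^{1/p}.
\]
Hence $\Sigma(\overline{X})$ is $p$-convex, and Kalton's criterion finishes the proof. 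The only delicate points to verify are that these scalar inequalities transfer correctly through the homogeneous functional calculus and that the elements $u_i$ indeed lie in $X_i$; both are routine given the calculus recalled in Section~\ref{Prel4}.
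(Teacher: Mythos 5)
Your proof is correct, but it is organized differently from the paper's. For $\Delta(\overline{X})$ the paper does not pass through $p$-convexity at all: it verifies Kalton's definition of $L$-convexity directly, taking $\varepsilon=\min(\varepsilon_0,\varepsilon_1)$ and observing that if $\Vert x\Vert_{\Delta(\overline{X})}=\Vert x\Vert_{X_0}=1$ then the averaging condition already forces $\max_i\Vert x_i\Vert_{X_0}\ge\varepsilon_0$; this is more elementary (no functional calculus, no appeal to \cite[Theorem~2.2]{Kal84} for that half), whereas your $\max$-of-norms computation proves the quantitatively stronger statement that $\Delta(\overline{X})$ is $p$-convex with constant $\max(M_0,M_1)$. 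For $\Sigma(\overline{X})$ the paper takes exactly your route --- fix a common $p$, show the sum is $p$-convex, invoke Kalton --- but it outsources the $p$-convexity of the sum to \cite[Proposition~3.2]{AsNi21}, while you reprove that fact from scratch via near-optimal decompositions $x_k=x_k^0+x_k^1$, the elements $u_i=(\sum_k|x_k^i|^p)^{1/p}\in X_i$, and the scalar inequalities $(a+b)^p\le a^p+b^p$ and $(a^p+b^p)^{1/p}\le 2^{1/p-1}(a+b)$. That self-contained argument is sound; the two points you flag as routine really are the only ones needing care, namely that the functional calculus transfers the pointwise inequalities (it does, since the embeddings into the ambient lattice are lattice homomorphisms) and that the quasi-norm of $\Sigma(\overline{X})$ is monotone, which is precisely the assertion, made in Section~\ref{Prel2} and resting on the interval-preserving property of the embeddings, that $\Sigma(\overline{X})$ is a quasi-Banach lattice. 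Your preliminary reduction to a single exponent $p=\min(r_0,r_1,1)$ also tacitly uses that $r$-convexity implies $q$-convexity for $q<r$ in the quasi-Banach range; this is true (e.g.\ via convexification and the renorming in Proposition~\ref{renorming}) and is implicitly assumed by the paper as well, so it is not a gap.
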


\begin{proof}
We show first that $\Delta \left( \overline{X}\right) $ is $L$-convex.
Suppose that $\varepsilon_i>0$, $i=0,1$, is the constant appearing in the
definition of the $L$-convexity of the space $X_i$, $i=0,1$. We put $%
\varepsilon:= \min_{i=0,1}\varepsilon_i$.

Assume now that $0\le x\in \Delta\left( \overline{X}\right) $, $\left\Vert
x\right\Vert _{\Delta \left(\overline{X}\right) }=1$, $x_{i}\in \Delta
\left( \overline{X}\right)$ such that $0\leq x_{i}\leq x$ and $\left(
x_{1}+...+x_{n}\right) /n\geq (1- \varepsilon) x.$ Let say $\left\Vert
x\right\Vert _{X_{0}}=1.$ Since by the assumption and the choice of $%
\varepsilon $ 
\begin{equation*}
\max_{i=1,..n}\left\Vert x_{i}\right\Vert _{X_{0}}\geq \varepsilon_0\ge
\varepsilon,
\end{equation*}
we have 
\begin{equation*}
\max_{i=1,...n}\left\Vert x_{i}\right\Vert _{\Delta \left( \overline{X}%
\right) }\geq \max_{i=1,..n}\left\Vert x_{i}\right\Vert _{X_{0}}\geq
\varepsilon.
\end{equation*}%
Hence, $\Delta \left( \overline{X}\right) $ is $L$-convex.

To get the same result for $\Sigma \left( \overline{X}\right)$ it suffices
to observe that there exists $p>0$ such that both $X_{0}$ and $X_{1}$ are
p-convex, and then, by \cite[Proposition~3.2]{AsNi21}, we conclude that $%
\Sigma \left( \overline{X}\right)$ is $p$-convex as well. Thus, $\Sigma
\left( \overline{X}\right)$ is $L$-convex \cite[Theorem~2.2]{Kal84}.
\end{proof}

\begin{lemma}
\label{L5} Suppose that $\overline{X}=(X_{0},X_{1})$ is a $L-$convex
quasi-Banach lattice couple and $E\in Int\left( \overline{L^{\infty }}%
\right) $ is a $L$-convex lattice. Then, the space $\overline{X}_{E:K}$ is $%
L $-convex as well.
\end{lemma}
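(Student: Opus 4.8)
The plan is to show directly that $\overline{X}_{E:K}$ is $p$-convex for a suitable $p\in(0,1]$; by Kalton's theorem (cited above, \cite[Theorem~2.2]{Kal84}) this is equivalent to $L$-convexity. First I would fix the exponent. Since $X_0$, $X_1$ and $E$ are each $L$-convex, each of them is $r$-convex for some $r>0$; using the standard fact that $q$-convexity of a quasi-Banach lattice entails $p$-convexity for every $0<p\le q$, I can choose a single $p\in(0,1]$ for which $X_0$, $X_1$ and $E$ are simultaneously $p$-convex, with constants $M_0:=M^{(p)}(X_0)$, $M_1:=M^{(p)}(X_1)$ and $M^{(p)}(E)$. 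I would also record that $\overline{X}_{E:K}$ is genuinely a quasi-Banach lattice: since $\overline{X}$ is a lattice couple, $K(t,\cdot;\overline{X})$ is monotone for the order of $\Sigma(\overline{X})$ and satisfies $K(t,x;\overline{X})=K(t,|x|;\overline{X})$, so $x\mapsto\|K(\cdot,x;\overline{X})\|_E$ is a lattice quasi-norm.

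The heart of the argument is a pointwise (in $t$) upper $p$-estimate for the $K$-functional: for arbitrary $x_1,\dots,x_n\in\Sigma(\overline{X})$,
\[
K\Big(t,\Big(\sum_{k=1}^n|x_k|^p\Big)^{1/p};\overline{X}\Big)\le C\Big(\sum_{k=1}^n K(t,x_k;\overline{X})^p\Big)^{1/p},\qquad t>0,
\]
with $C=2^{1/p+1}\max(M_0,M_1)$. To prove it, fix $t>0$ and $\varepsilon>0$ and choose, for each $k$, a decomposition $|x_k|=y_k+z_k$ with $0\le y_k\in X_0$, $0\le z_k\in X_1$ and $\|y_k\|_{X_0}+t\|z_k\|_{X_1}\le K(t,x_k;\overline{X})+\varepsilon/n$ (using $K(t,x_k)=K(t,|x_k|)$). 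Put $u:=(\sum_k y_k^p)^{1/p}\in X_0$ and $v:=(\sum_k z_k^p)^{1/p}\in X_1$ via homogeneous calculus. Because $p\le1$, subadditivity of $s\mapsto s^p$ gives $\sum_k|x_k|^p\le\sum_k y_k^p+\sum_k z_k^p=u^p+v^p$, whence $w:=(\sum_k|x_k|^p)^{1/p}\le(u^p+v^p)^{1/p}\le 2^{1/p}(u+v)$.

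Now I would invoke the Riesz decomposition property of the vector lattice $\Sigma(\overline{X})$: from $0\le w\le 2^{1/p}u+2^{1/p}v$ one obtains $w=w_0+w_1$ with $0\le w_0\le 2^{1/p}u$ and $0\le w_1\le 2^{1/p}v$. Since the embeddings $X_i\hookrightarrow\Sigma(\overline{X})$ are interval preserving, $w_0\in X_0$ and $w_1\in X_1$ with $\|w_0\|_{X_0}\le 2^{1/p}\|u\|_{X_0}$ and $\|w_1\|_{X_1}\le 2^{1/p}\|v\|_{X_1}$. Hence $K(t,w;\overline{X})\le 2^{1/p}(\|u\|_{X_0}+t\|v\|_{X_1})$; applying the $p$-convexity of $X_0$ and $X_1$ together with the elementary bound $(\sum_k a_k^p)^{1/p}+(\sum_k b_k^p)^{1/p}\le 2(\sum_k(a_k+b_k)^p)^{1/p}$ (valid since $a_k,b_k\le a_k+b_k$) for $a_k=\|y_k\|_{X_0}$, $b_k=t\|z_k\|_{X_1}$, and letting $\varepsilon\to0$, yields the claimed pointwise inequality.

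Finally, applying the monotone lattice quasi-norm $\|\cdot\|_E$ to both sides and then the $p$-convexity of $E$ gives
\[
\Big\|\Big(\sum_k|x_k|^p\Big)^{1/p}\Big\|_{\overline{X}_{E:K}}\le C\,\Big\|\Big(\sum_k K(\cdot,x_k;\overline{X})^p\Big)^{1/p}\Big\|_E\le C\,M^{(p)}(E)\Big(\sum_k\|x_k\|_{\overline{X}_{E:K}}^p\Big)^{1/p},
\]
so $\overline{X}_{E:K}$ is $p$-convex and therefore $L$-convex. I expect the genuine obstacle to be the middle step, namely producing an $X_0$/$X_1$-decomposition of $(\sum_k|x_k|^p)^{1/p}$ with the right norm control; the clean way through is precisely the combination of the $p\le1$ subadditivity, the Riesz decomposition property, and interval preservation of the embeddings, after which the two convexity estimates are routine.
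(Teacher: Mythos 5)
Your proof is correct and follows essentially the same route as the paper: reduce to a common $p$ for which $X_0$, $X_1$ and $E$ are all $p$-convex, establish the pointwise estimate $K\bigl(t,(\sum_k|x_k|^p)^{1/p};\overline{X}\bigr)\preceq(\sum_k K(t,x_k;\overline{X})^p)^{1/p}$, and then apply the $p$-convexity of $E$. The only difference is that the paper simply cites this pointwise inequality (as the $p$-convexity of $\Sigma(\overline{X})$ under the $K$-functional quasi-norm, \cite[Proposition~3.2]{AsNi21}), whereas you prove it directly via near-optimal positive decompositions, the subadditivity of $s\mapsto s^p$, the Riesz decomposition property and interval preservation of the embeddings --- a correct and self-contained substitute for that citation.
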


\begin{proof}
Let $x_{k}\in \overline{X}_{E:K}$, $k=1,2,\dots ,n$, be arbitrary. By \cite[%
Theorem~2.2]{Kal84}, there is $r>0$ such that the couple $\overline{X}$ and
the lattice $E$ are $r$-convex. Consequently, applying \cite[Proposition~3.2]%
{AsNi21}, we have 
\begin{equation*}
K\left( t,\left( \sum_{k=1}^{n}|x_{k}|^{r}\right) ^{1/r};\overline{X}\right)
\leq M^{\left( r\right) }\left( \Sigma \left( \overline{X}\right) \right)
\left( \sum_{k=1}^{n}K\left( t,|x_{k}|;\overline{X}\right) ^{r}\right)
^{1/r}.
\end{equation*}%
Hence, by \cite[Lemma~3.3]{AsNi21}, 
\begin{eqnarray*}
\left\Vert \left( \sum_{k=1}^{n}|x_{k}|^{r}\right) ^{1/r}\right\Vert _{%
\overline{X}_{E:K}} &\leq &M^{\left( r\right) }\left( \Sigma \left( 
\overline{X}\right) \right) \left\Vert \left( \sum_{k=1}^{n}K\left( \cdot
,|x_{k}|;\overline{X}\right) ^{r}\right) ^{1/r}\right\Vert _{E} \\
&\leq &M^{\left( r\right) }\left( \Sigma \left( \overline{X}\right) \right)
M^{\left( r\right) }\left( E\right) \left( \sum_{k=1}^{n}\left\Vert K\left(
\cdot ,|x_{k}|;\overline{X}\right) \right\Vert _{E}^{r}\right) ^{1/r} \\
&=&M^{\left( r\right) }\left( \Sigma \left( \overline{X}\right) \right)
M^{\left( r\right) }\left( E\right) \left( \sum_{k=1}^{n}\left\Vert
x_{k}\right\Vert _{\overline{X}_{E:K}}^{r}\right) ^{1/r}.
\end{eqnarray*}%
Thus, $\overline{X}_{E:K}$ is $r$-convex, whence it is $L$-convex.
\end{proof}

\section{$K$-Divisibility for Banach lattice couples}

\label{Divisibility for Banach}

Regarding the $K$-divisibility theorem for Banach couples, one of the
central result of the Brudnyi-Kruglyak interpolation theory, see \cite[%
Theorem 3.2.7]{BK91} and \cite{Cw84}.

\vskip0.2cm

Here, we will show that in the case of Banach lattice couples the $K$
-divisibility property has the additional useful feature that elements $%
x_{n} $, $n=1,2,\dots$, in a representation of a nonnegative element $x\in \Sigma \left( 
\overline{X}\right) $ can be selected also to be nonnegative. Note that in
the more restricted case of function lattices on a $\sigma $-finite measure
space a close result can be proved in \cite[Theorem~4.1]{CwNi03}.

\begin{theorem}
\label{K-Divisibility for Lattices} Suppose $\overline{X}=(X_{0},X_{1})$ is
a Banach lattice couple and $x\in \Sigma \left( \overline{X}\right) $, $%
x\geq 0.$ Assume that 
\begin{equation}  \label{the main ineq}
K\left( t,x;\overline{X}\right) \leq \sum_{n=1}^{\infty }\varphi
_{n}(t),\;\;t>0,
\end{equation}%
where $\varphi _{n}\in Conv$ such that $\sum_{n=1}^{\infty }\varphi \left(
1\right) <\infty $.

Then there exist $x_{n}\in \Sigma \left( \overline{X}\right) $, $x_{n}\geq 0$%
, satisfying 
\begin{equation*}
x=\sum_{n=1}^{\infty }x_{n}\;\;\mbox{(convergence in}\;\Sigma \left( 
\overline{X}\right) )\;\;\mbox{and}\;\;K\left( t,x_{n};\overline{X}\right)
\leq \gamma ^{\prime }\varphi _{n}(t),\;n\in \mathbb{N},\;t>0,
\end{equation*}%
where $\gamma ^{^{\prime }}$ is a universal constant.
\end{theorem}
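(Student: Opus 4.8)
The plan is to reduce the lattice version to the already-known $K$-divisibility theorem for Banach couples (\cite[Theorem~3.2.7]{BK91}), and then upgrade the resulting decomposition so that every summand is nonnegative. First I would invoke the classical $K$-divisibility theorem: under the hypothesis \eqref{the main ineq}, there exist elements $y_n\in\Sigma(\overline{X})$ with $x=\sum_{n=1}^\infty y_n$ (convergence in $\Sigma(\overline{X})$) and $K(t,y_n;\overline{X})\le\gamma\,\varphi_n(t)$ for all $t>0$, where $\gamma$ is the universal $K$-divisibility constant. These $y_n$ need not be nonnegative, so the whole difficulty lies in replacing them by nonnegative elements without destroying either the summation identity or the $K$-functional control (at the cost of enlarging the constant from $\gamma$ to some $\gamma'$).

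The natural device is to project onto the positive cone using the lattice structure of $\Sigma(\overline{X})$, which is a Banach lattice because $\overline{X}$ is a Banach lattice couple (as noted in Subsection~\ref{Prel2}). Since $x\ge 0$ and $x=\sum_n y_n$, I would set $x_n:=(y_n\vee 0)\wedge r_n$ for a suitable running truncation, or more cleanly proceed by a telescoping positive decomposition: define the partial sums $s_N:=\sum_{n=1}^N y_n$ and then take positive parts of consecutive differences of $x\wedge s_N^+$-type quantities. The cleanest route is probably to observe that $0\le x=\sum_n y_n$ and to use the lattice identity that lets one write $x$ as a sum of the nonnegative elements $x_n:=\big(x-\sum_{k<n}x_k\big)\wedge y_n^{+}$, chosen so that $0\le x_n\le y_n^{+}\le|y_n|$; then $\sum x_n=x$ by a monotone-convergence/telescoping argument in $\Sigma(\overline{X})$. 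The key point making this work is the monotonicity of the $K$-functional on a Banach lattice couple: whenever $0\le u\le v$ pointwise in the lattice $\Sigma(\overline{X})$, one has $K(t,u;\overline{X})\le K(t,v;\overline{X})$, since in the definition of $K(t,v)=\inf\{\|v_0\|+t\|v_1\|\}$ one may replace any decomposition $v=v_0+v_1$ by the lattice-truncated decomposition dominating $u$. This monotonicity, combined with $|x_n|\le|y_n|$, gives
\begin{equation*}
K(t,x_n;\overline{X})\le K(t,|y_n|;\overline{X})\le C\,K(t,y_n;\overline{X})\le C\gamma\,\varphi_n(t),
\end{equation*}
where $C$ accounts for the (at most bounded) distortion between $K(t,|y_n|)$ and $K(t,y_n)$ in a general Banach lattice couple; setting $\gamma':=C\gamma$ yields the claimed universal constant.

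I would carry out the steps in this order: (i) apply classical $K$-divisibility to obtain the (possibly signed) $y_n$; (ii) verify the lattice monotonicity $0\le u\le v\Rightarrow K(t,u)\le K(t,v)$ and the comparison $K(t,|y_n|)\le C\,K(t,y_n)$ using the interval-preserving lattice embeddings of $\overline{X}$; (iii) build the nonnegative $x_n$ by the telescoping truncation against $y_n^{+}$, checking both $0\le x_n\le|y_n|$ and $\sum x_n=x$ in $\Sigma(\overline{X})$; and (iv) assemble the final $K$-estimate and name the universal constant $\gamma'$. The main obstacle I anticipate is step (iii): ensuring that the truncated nonnegative pieces still sum to exactly $x$ with convergence in $\Sigma(\overline{X})$, and that the truncation against the positive parts does not lose mass. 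The delicate part is verifying that $x-\sum_{n=1}^N x_n\to 0$ in $\Sigma(\overline{X})$; this should follow from $0\le x-\sum_{n\le N}x_n\le\sum_{n>N}|y_n|$ together with the summability $\sum_n K(1,y_n)\le\gamma\sum_n\varphi_n(1)<\infty$ controlling the tail in the $\Sigma(\overline{X})$-norm (recall $\|h\|_{\Sigma(\overline{X})}\cong K(1,h;\overline{X})$). Once these lattice-theoretic bookkeeping points are pinned down, the rest is a direct transcription of the signed estimates into their nonnegative analogues.
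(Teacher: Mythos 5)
Your argument is correct, but it takes a genuinely different route from the paper's. You use the classical $K$-divisibility theorem for Banach couples as a black box to get signed pieces $y_n$ with $x=\sum y_n$ and $K(t,y_n;\overline{X})\leq\gamma\varphi_n(t)$, and then positivize the output: since $\sum_n\Vert y_n\Vert_{\Sigma(\overline{X})}\leq\gamma\sum_n\varphi_n(1)<\infty$, the series $\sum_n|y_n|$ converges absolutely in $\Sigma(\overline{X})$ and dominates $x$, so the infinite Riesz decomposition (which is exactly Lemma \ref{inf decomposition} of the paper, applied in $\Sigma(\overline{X})$) produces $0\leq x_n\leq|y_n|$ with $\sum x_n=x$; the $K$-estimate then follows from the two lattice facts you isolate, namely that $0\leq u\leq v$ implies $K(t,u;\overline{X})\leq K(t,v;\overline{X})$ and that $K(t,|y|;\overline{X})\leq K(t,y;\overline{X})$, both of which hold with constant $1$ for a Banach lattice couple by combining the finite Riesz decomposition property with the interval-preserving embeddings, so in fact $\gamma'=\gamma$. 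The paper instead never invokes full $K$-divisibility: it re-runs its proof while keeping positivity throughout, applying the strong form of the fundamental lemma, positivizing that output via Lemma \ref{inf decomposition}, encoding the resulting decomposition as a positive operator $T$ from a couple of weighted $l^1$-spaces with $T(e)=x$, realizing each $\varphi_n$ as the $K$-functional of a positive element $b_n$ of a second weighted $l^1$-couple, and invoking Cwikel's positive-operator version of the Sedaev--Semenov theorem to get a positive $S$ with $S(b)=e$, after which $x_n:=TS(b_n)\geq 0$. Your route is shorter and yields a better constant; the paper's route additionally produces an explicit positive factorization through weighted $l^1$-couples and avoids having to compare $K(t,|y|)$ with $K(t,y)$ for signed $y$. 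One point you should nail down rather than assert: for your recursion $x_n=\bigl(x-\sum_{k<n}x_k\bigr)\wedge y_n^{+}$, the convergence $x-\sum_{n\leq N}x_n\to 0$ follows from the invariant $0\leq x-\sum_{n\leq N}x_n\leq\sum_{n>N}y_n^{+}$, which propagates because $0\leq a\leq b+c$ with $b,c\geq 0$ forces $(a-b)^{+}\leq c$; this is precisely the inductive step in the proof of Lemma \ref{inf decomposition}.
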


We show first that the standard decomposition property in Banach lattices (see e.g. \cite[p.~2]{LT79-II}) can be easily extended to infinite sums.

\begin{lemma}
\label{inf decomposition} Let $X$ be a Banach lattice and let $y,y_{n}\in X$, $n=1,2,\dots$, be non-negative elements such that $\sum_{n=1}^\infty
y_{n}\in X$ and $y\leq \sum_{n=1}^\infty y_{n}.$ Then there exists $z_{n}\in
X$ with $0\leq z_{n}\leq y_{n}$, $n=1,2,\dots$, and $y=\sum_{n=1}^\infty
z_{n}.$
\end{lemma}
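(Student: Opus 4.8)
The plan is to extend the finite decomposition property for Banach lattices to a countable sum by a limiting argument, defining the $z_n$ one at a time and controlling the remainder. Recall the standard finite version (see \cite[p.~2]{LT79-II}): if $0\le y\le a+b$ with $a,b\ge 0$, then $y=y'+y''$ with $0\le y'\le a$ and $0\le y''\le b$; concretely one may take $y'=y\wedge a$ and $y''=y-y\wedge a=(y-a)^+\le b$. First I would set $s_n:=\sum_{k=1}^n y_k$ and $s:=\sum_{k=1}^\infty y_k\in X$, and define the $z_n$ recursively so that the partial sums of the $z_n$ track $y$ against the partial sums of the $y_n$.

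The key step is the recursion. Set $r_0:=y$ and, having defined $z_1,\dots,z_{n-1}$, put
\begin{equation*}
z_n:=\Bigl(y-\sum_{k=1}^{n-1}z_k\Bigr)\wedge y_n,\qquad r_n:=y-\sum_{k=1}^{n}z_k.
\end{equation*}
By construction $0\le z_n\le y_n$, and I would prove by induction the two invariants $0\le r_n$ and $r_n\le \sum_{k>n} y_k=s-s_n$. The lower bound $r_n\ge0$ follows from $z_n\le r_{n-1}$; for the upper bound one uses the elementary lattice identity $r_n=r_{n-1}-r_{n-1}\wedge y_n=(r_{n-1}-y_n)^+$, so that from $r_{n-1}\le s-s_{n-1}=y_n+(s-s_n)$ one gets $r_n=(r_{n-1}-y_n)^+\le s-s_n$ since $s-s_n\ge0$. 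This is exactly the finite decomposition step applied at stage $n$ to the bound $r_{n-1}\le y_n+(s-s_n)$, and it is the heart of the argument.

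It remains to pass to the limit. From $0\le r_n\le s-s_n$ and order-continuity of the norm — or, more safely in a general Banach lattice, simply from $\Vert r_n\Vert\le\Vert s-s_n\Vert=\Vert\sum_{k>n}y_k\Vert\to0$, which holds because $s_n\to s$ in $X$ — I conclude $r_n\to0$ in $X$, i.e. $\sum_{k=1}^n z_k=y-r_n\to y$. Hence $\sum_{n=1}^\infty z_n=y$ in $X$ with $0\le z_n\le y_n$ for every $n$, as required. I expect the only genuine obstacle to be the convergence justification: the cleanest route is the norm estimate $\Vert r_n\Vert\le\Vert s-s_n\Vert$ just noted, which avoids invoking any order-continuity hypothesis and uses only that $\sum y_n$ converges to $s$ in the norm of $X$ together with the lattice monotonicity $\Vert\cdot\Vert$ of the norm applied to $0\le r_n\le s-s_n$.
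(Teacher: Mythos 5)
Your proof is correct and follows essentially the same route as the paper's: a recursive application of the finite decomposition property in which the remainder after $n$ steps is dominated by the tail $\sum_{k>n}y_{k}$, whose norm tends to zero because the series converges in $X$. The only difference is cosmetic: you make the decomposition explicit via $z_{n}=r_{n-1}\wedge y_{n}$ and the identity $r_{n}=(r_{n-1}-y_{n})^{+}$, whereas the paper invokes the abstract Riesz decomposition from \cite[p.~2]{LT79-II} at each stage.
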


\begin{proof}
Let $N$ be a fixed positive integer. By recursion, by \cite[p.~2]{LT79-II}, we can select $z_{n}\in X$, $n=1,2,\dots,N$ and $z_{N}\in X$ such that $0\leq z_{n}\leq y_{n}$, $%
n=1,2,\dots,N$, $0\leq z_{N}\leq \sum_{n=N+1}^{\infty }y_{n}$ and $y=\sum_{n=1}^{N}z_{n}+z_{N}.$ Since $z_{N}\rightarrow 0$ in $X$, it follows
that $y=\sum_{n=1}^\infty z_{n}$, and the desired result follows.
\end{proof}

\begin{proof}[Proof of Theorem \protect\ref{K-Divisibility for Lattices}]
Without loss of generality, we can assume that $\overline{X}$ is a mutually closed couple. Next, we use some arguments similar to those from the proof of \cite[Theorem 4.1, p.~44]{CwNi03}.

Let $x\in \Sigma \left( \overline{X}\right) $ be such that $x\geq 0$ and let 
$\varepsilon >0.$ Then, by the strong form of the fundamental lemma  \cite[Theorem 1.7]{CJM90} (for another version of this result see  \cite[Theorem 2.10]{KrSaSh05}), there exist $y_{n}\in \Delta \left( \overline{X}\right) $, $n\in \mathbb{Z}$, $y_{-\infty }\in X_{0}$, $y_{\infty }\in X_{1}$
such that $x=\sum_{n\in \mathbb{Z}}y_{n}+y_{-\infty }+y_{\infty }$ and 
\begin{equation}
\left\Vert y_{-\infty }\right\Vert _{X_{0}}+\sum_{n\in \mathbb{Z}}\min
\left( \left\Vert y_{n}\right\Vert _{X_{0}},t\left\Vert y_{n}\right\Vert
_{X_{1}}\right) +t\left\Vert y_{\infty }\right\Vert _{X_{1}}\leq \gamma
\left( 1+\varepsilon \right) K\left( t,x;\overline{X}\right),
\label{fund lemma}
\end{equation}
where $\gamma $ is the $K$-divisibility constant for the class of Banach
spaces. It is clear that $y=\sum_{n\in \mathbb{Z}}\left\vert
y_{n}\right\vert +\left\vert y_{-\infty }\right\vert +\left\vert y_{\infty
}\right\vert $ is an absolutely convergent series in $\Sigma \left( 
\overline{X}\right) $. Since $x\leq y,$ it follows from Lemma \ref{inf
decomposition} that there exist non-negative elements $z_{n},z_{-\infty
},z_{\infty }\in X$ such that $z_{n}\leq \left\vert y_{n}\right\vert
,z_{-\infty }\leq \left\vert y_{-\infty }\right\vert ,z_{\infty }\leq
\left\vert y_{\infty }\right\vert $ and $x=z_{-\infty }+\sum_{n\in \mathbb{Z}%
}z_{n}+z_{\infty }.$ Therefore, in view of \eqref{fund lemma}, we get 
\begin{equation}
\left\Vert z_{-\infty }\right\Vert _{X_{0}}+\sum_{n\in \mathbb{Z}}\min
\left( \left\Vert z_{n}\right\Vert _{X_{0}},t\left\Vert z_{n}\right\Vert
_{X_{1}}\right) +t\left\Vert z_{\infty }\right\Vert _{X_{1}}\leq \gamma
\left( 1+\varepsilon \right) K\left( t,x;\overline{X}\right) ,
\label{fund lemma1}
\end{equation}

Let $\mathcal{A}$ be the set of all $n\in \mathbb{Z}$, for which $z_{n}\neq
0 $, and let $\widetilde{\mathcal{A}}$ be obtained by the addition to $%
\mathcal{A}$ two atoms $-\infty $ and $\infty $, i.e., $\widetilde{\mathcal{A%
}}=\mathcal{A}\cup \{-\infty \}\cup \{\infty \}.$ Consider the linear space $%
l^{1}\left( w_{0}\right) $ of all sequences $a=\left( a_{n}\right) _{n\in 
\widetilde{\mathcal{A}}}$ of real numbers such that $a_{\infty }=0$, which
is equipped with the norm 
\begin{equation*}
\Vert a\Vert _{l^{1}\left( w_{0}\right) }:=\left\vert a_{-\infty
}\right\vert \left\Vert z_{-\infty }\right\Vert _{X_{0}}+\sum_{n\in \mathcal{%
A}}\left\vert a_{n}\right\vert \left\Vert z_{n}\right\Vert _{X_{0}}.
\end{equation*}%
In other words, the latter space is the usual weighted $l^{1}$-space over
the set $\widetilde{\mathcal{A}}$ with the counting measure such that the
corresponding weight $w_{0}$ may assume the value $+\infty $ on some set of positive measure. Namely, the weight $w_{0}$ is defined by $w_{0}(n)=\left\Vert
z_{n}\right\Vert _{X_{0}}$ if $n\in \mathcal{A} $, $w_{0}(-\infty
)=\left\Vert z_{-\infty }\right\Vert _{X_{0}}$ and $w_{0}(\infty )=\infty $.

Similarly, let $l^{1}\left( w_{1}\right) $ denote the weighted $l^{1}$-space
over the set $\widetilde{\mathcal{A}}$, where $w_{1}(n)=\left\Vert
z_{n}\right\Vert _{X_{1}}$ if $n\in \mathcal{A}$, $w_{1}(-\infty )=\infty $
and $w_{1}(\infty )=\left\Vert z_{\infty }\right\Vert _{X_{1}}$. Thus, $%
l^{1}\left( w_{1}\right) $ consists of all sequences $a=\left( a_{n}\right)
_{n\in \widetilde{\mathcal{A}}}$ such that $a_{-\infty }=0$ and 
\begin{equation*}
\Vert a\Vert _{l^{1}\left( w_{1}\right) }:=\left\vert a_{\infty }\right\vert
\left\Vert z_{\infty }\right\Vert _{X_{1}}+\sum_{n\in \mathcal{A}}\left\vert
a_{n}\right\vert \left\Vert z_{n}\right\Vert _{X_{1}}<\infty .
\end{equation*}

Let $\overline{A}:=\left(l^{1}\left( w_{0}\right),l^{1}\left( w_{1}\right)\right)$, $e_{n}$, $n\in \widetilde{\mathcal{A}}$, be the standard basis vectors, and let $e$ be the element of $\Sigma \left( \overline{A}\right) $ with all
entries equal to $1$. Define on $\Sigma \left( \overline{A}\right) $ the
linear operator $T$ by $T\left( e_{n}\right) =z_{n}$, $n\in \mathcal{A}$, $%
T\left( e_{-\infty }\right) =z_{-\infty }$ and $T\left( e_{\infty }\right)
=z_{\infty }.$ Then, $T$ is a positive operator such that $T:\overline{A}%
\rightarrow \overline{X}$ and $T\left( e\right) =x.$ Moreover, from %
\eqref{fund lemma1} and the well-known equivalence formula for the $K$-functional for a couple of weighted $l^{1}$-spaces (see e.g. \cite[%
Lemma~3.4.1]{Ovc84}) it follows that 
\begin{equation}
K\left( t,e;\overline{A}\right) \cong \left\Vert z_{-\infty }\right\Vert
_{X_{0}}+\sum_{n\in \mathcal{A}}\min \left( \left\Vert z_{n}\right\Vert
_{X_{0}},t\left\Vert z_{n}\right\Vert _{X_{1}}\right) +t\left\Vert z_{\infty
}\right\Vert _{X_{1}}\preceq K\left( t,x;\overline{X}\right)
\label{fund lemma2}
\end{equation}

Assume now that $\varphi \in Conv.$ By the representation theorem for
concave functions \cite[Lemma~5.4.3]{BL76}, we can write 
\begin{equation}
\varphi \left( t\right) =\alpha +\beta t+\varphi _{0}\left( t\right) ,
\label{conc1}
\end{equation}%
where $\varphi _{0}\in Conv^{0}$, that is, $\varphi _{0}$ is a concave
nondecreasing function satisfying the condition: $\lim_{t\rightarrow
0,\infty }\min \left( 1,1/t\right) \varphi _{0}\left( t\right) =0$.
According to \cite[p.~398]{Ovc84}, one can select a sequence $b^{\prime }\in
\Sigma( \overline{l^{1}}):= l^{1}+l^{1}(2^{-k})$, where $l^{1}=l^{1}(\mathbb{Z})$, such that $b^{\prime }\geq 0$ and 
\begin{equation}
K( t,b^{\prime };\overline{l^{1}}) \cong \varphi _{0}\left( t\right)
,\;\;t>0.  \label{conc2}
\end{equation}

Let $\overline{B}$ be the couple obtained from the couple $\overline{l^{1}}$
by the addition to $\mathbb{Z}$ two atoms $-\infty $ and $\infty $,
similarly as in the construction of the couple $\overline{A}$ above. More
precisely, $\overline{B}=(l^{1}(v_{0}),l^{1}(v_{1}))$, where $l^{1}(v_{0})$
and $l^{1}(v_{1})$ are weighted $l^{1}$-spaces over the set $\mathbb{Z}\cup
\{\pm \infty \}$ with the counting measure, where $v_{0}(n)=1$ if $n\in 
\mathbb{Z}\cup \{-\infty \}$, $v_{0}(\infty )=\infty $, $v_{1}(n)=2^{-n}$ if 
$n\in \mathbb{Z}$, $v_{1}(-\infty )=\infty $, $v_{1}(\infty )=1$.

Also, let $b=(b_{n})_{\mathbb{Z}\cup \{\pm \infty \}}$ be such that $%
b_{n}=b_{n}^{\prime }$ if $n\in \mathbb{Z}$, $b_{-\infty }=\alpha $ and $%
b_{\infty }=\beta $. Then, by the above-mentioned formula for the $K$%
-functional, we have 
\begin{equation*}
K\left( t,b;\overline{B}\right) \cong \alpha +t\beta +\sum_{n\in \mathbb{Z}%
}b_{n}\min \left( 1,t2^{-n}\right) \cong \alpha +t\beta +K(t,b^{\prime };%
\overline{l^{1}}).
\end{equation*}%
Combining this together with \eqref{conc1} and \eqref{conc2} yields $K\left(
t,{b}:\overline{B}\right) \cong \varphi \left( t\right) $, $t>0$. Hence,
from \eqref{fund lemma2} it follows that the inequality $K\left( t,x;%
\overline{X}\right) \preceq \varphi \left( t\right) $, $t>0$, implies that $%
K\left( t,e;\overline{A}\right) \preceq K\left( t,b;\overline{B}\right) $, $%
t>0$. Therefore, using the extended version of the Sadaev-Semenov theorem
(see \cite{SaSe71}) due to Cwikel in \cite{Cw84} (see also \cite[Remark~4.1]%
{CwKo02}), we can find a positive bounded linear operator \thinspace $S:\,%
\overline{B}\rightarrow \overline{A}$ such that $S\left( b\right) =e$.

Assume now that all conditions of the theorem to be hold. As we observed
above, for each ${n}\in \mathbb{N}$ one can select $b_{n}\in \Sigma \left( 
\overline{B}\right) $ such that $b_{n}\geq 0$ and $\varphi _{n}(t)\cong
K\left( t,b_{n};\overline{B}\right) $, $t>0$. Since 
\begin{equation*}
K\left( t,\sum_{n=1}^{\infty }b_{n};\overline{B}\right) =\sum_{n=1}^{\infty
}K\left( t,b_{n};\overline{B}\right) \cong \sum_{n=1}^{\infty }\varphi
_{n}(t),\;\;t>0,
\end{equation*}%
and, by condition, $\sum_{n=1}^{\infty }\varphi _{n}(1)<\infty $, then $%
b:=\sum_{n=1}^{\infty }b_{n}\in \Sigma \left( \overline{B}\right) $ and 
\begin{equation*}
K\left( t,b;\overline{B}\right) \cong \sum_{n=1}^{\infty }\varphi
_{n}(t),\;\;t>0.
\end{equation*}
Hence, from assumption \eqref{the main ineq} and estimate 
\eqref{fund
lemma2} it follows that 
\begin{equation*}
K\left( t,e;\overline{A}\right) \preceq K\left( t,b;\overline{B}\right)
,\;\;t>0.
\end{equation*}%
Consequently, by the above discussion, there exists a positive linear
operator $S$ bounded from the couple $\overline{B}$ into the couple $%
\overline{A}$ such that $S\left( b\right) =e.$ Then, if $T$ is the operator
constructed in the first part of the proof, we have $TS\left( b\right) =x.$
Moreover, since $TS$ is a positive operator, then $x_{n}:=TS\left(
b_{n}\right) \geq 0$, $n=1,2,\dots $. As a result, we have 
\begin{equation*}
\sum_{n=1}^{\infty }x_{n}=TS\left( \sum_{n=1}^{\infty }b_{n}\right)
=TS\left( b\right) =x
\end{equation*}%
and for all $n\in \mathbb{N}$ 
\begin{equation*}
K\left( t,x_{n};\overline{X}\right) \preceq K\left( t,b_{n};\overline{B}%
\right) \cong \varphi _{n}\left( t\right) ,\;\;t>0,
\end{equation*}
with some universal constant. This completes the proof.
\end{proof}

\section{$K$-divisibility property for $L$-convex quasi-Banach lattice
couples}

\label{divisibility property for quasi-Banach}

In this section, by using Theorem \ref{K-Divisibility for Lattices}, we
prove a version of the $K$-divisibility property for $L$-convex quasi-Banach
lattice couples.

\begin{theorem}
\label{K-divisibility} Let $\overline{X}=(X_{0},X_{1})$ be a $p$-convex
quasi-Banach lattice couple, where $p\in (0,1]$. Suppose that $x\in \Sigma
\left( \overline{X}\right) $ and 
\begin{equation}
K\left( t,x;\overline{X}\right) \leq \left( \sum_{i=1}^{\infty }\varphi
_{i}\left( t\right) ^{p}\right) ^{1/p},\;\;t>0,  \label{eq1}
\end{equation}%
with $\varphi _{i}\in Conv$, $i=1,2,\dots $, satisfying 
\begin{equation*}
\sum_{i=1}^{\infty }\varphi _{i}\left( 1\right) ^{p}<\infty .
\end{equation*}

Then, there exist elements $0\leq x_{i}\in \Sigma \left( \overline{X}\right) 
$ such that for all $i=1,2,\dots $ 
\begin{equation}
K\left( t,x_{i};\overline{X}\right) \leq \gamma ^{\prime \prime }\cdot
\varphi _{i}\left( t\right) ,\;\;t>0,  \label{eq3}
\end{equation}%
where the constant $\gamma ^{\prime \prime }$ depends only on $p$ and the $p$%
-convexity constant of the couple $\overline{X}$, and 
\begin{equation}
\left\vert x\right\vert =\sup_{n=1,2,\dots }\left(
\sum_{i=1}^{n}x_{i}^{p}\right) ^{1/p}\;\;\mbox{(in}\;\;\Sigma \left( 
\overline{X}\right) ),\;\;\mbox{with}\;\;\lim_{n,m\rightarrow \infty }\left(
\sum_{i=n}^{m}x_{i}^{p}\right) ^{1/p}=0.  \label{eq2}
\end{equation}

In particular, if $\overline{X}=(X_0,X_1)$ is a $L$-convex quasi-Banach
lattice couple, then there is $p>0$, for which this couple possesses the
above $p$-$K$-divisibility property.
\end{theorem}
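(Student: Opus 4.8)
The plan is to reduce everything to the Banach lattice setting, where Theorem~\ref{K-Divisibility for Lattices} is available, by passing to the $1/p$-convexification of the couple. Since $\overline{X}$ is $p$-convex, the remark after the convexification definition shows that each $X_i^{(1/p)}$ is $1$-convex, hence by Proposition~\ref{renorming} lattice isomorphic to a Banach lattice; convexifying a common ambient Hausdorff topological vector lattice $\mathcal{H}$ for $\overline{X}$ one checks that $\overline{Y}:=\overline{X}^{(1/p)}=(X_0^{(1/p)},X_1^{(1/p)})$ is a genuine Banach lattice couple. For $0\le x\in X_i$ its image under the convexification map is $x^{p}:=|x|^{p}$, and by the norm formula for $X_i^{(1/p)}$ one has $|||x^{p}|||_{X_i^{(1/p)}}=\Vert x\Vert_{X_i}^{p}$. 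I would therefore work with $x^{p}\ge 0$ in $\overline{Y}$ and pull the resulting decomposition back.

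The technical heart is a two-sided transformation rule for the $K$-functional under convexification, namely
\[
K\!\left(t,x^{p};\overline{Y}\right)\cong K\!\left(t^{1/p},x;\overline{X}\right)^{p},\qquad x\ge 0,
\]
with constants depending only on $p$ and the $p$-convexity constant of $\overline{X}$. For the upper estimate I would start from a decomposition $x=x_0+x_1$ with $0\le x_j\le x$ (admissible for the $K$-functional of a lattice couple), use the $p$-subadditivity $x^{p}\le x_0^{p}+x_1^{p}$ valid for $0<p\le 1$, apply the Riesz decomposition property (as in Lemma~\ref{inf decomposition}, here in the Banach lattice $\Sigma(\overline{Y})$) to split $x^{p}=u_0+u_1$ with $0\le u_j\le x_j^{p}$, and finally invoke the elementary equivalence $a^{p}+t\,b^{p}\cong (a+t^{1/p}b)^{p}$ before taking the infimum over decompositions. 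For the reverse estimate I would run the symmetric argument: from $x^{p}=v_0+v_1$ in $\overline{Y}$ use $(v_0+v_1)^{1/p}\le 2^{1/p-1}(v_0^{1/p}+v_1^{1/p})$ (since $1/p\ge 1$), apply Riesz decomposition in $\Sigma(\overline{X})$, and again compare sums of powers with powers of sums.

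With this rule in hand the hypothesis \eqref{eq1} transfers to $K(t,x^{p};\overline{Y})\preceq\sum_i\psi_i(t)$ where $\psi_i(t):=\varphi_i(t^{1/p})^{p}$. A short check shows each $\psi_i$ is quasiconcave, so replacing it by a fixed multiple of its least concave majorant $\widehat{\psi}_i$ (with $\psi_i\le\widehat{\psi}_i\le 2\psi_i$) lands us in the cone $Conv$ while keeping $\sum_i\widehat{\psi}_i(1)\preceq\sum_i\varphi_i(1)^{p}<\infty$. Applying Theorem~\ref{K-Divisibility for Lattices} to $x^{p}\ge 0$ in $\overline{Y}$ yields $y_i\ge 0$ with $x^{p}=\sum_i y_i$ in $\Sigma(\overline{Y})$ and $K(t,y_i;\overline{Y})\preceq\varphi_i(t^{1/p})^{p}$. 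Setting $x_i:=y_i^{1/p}\ge 0$, the reverse transformation rule gives $K(t,x_i;\overline{X})\preceq K(t^{p},y_i;\overline{Y})^{1/p}\preceq\varphi_i(t)$, which is \eqref{eq3}. For \eqref{eq2} I would note that $\sum_{i=1}^{n}x_i^{p}=\sum_{i=1}^{n}y_i$ increases to $x^{p}$ in $\Sigma(\overline{Y})$, whence $\sup_n(\sum_{i=1}^{n}x_i^{p})^{1/p}=|x|$, and the tail estimate follows from evaluating the transformation rule at $t=1$: $\Vert(\sum_{i=n}^{m}x_i^{p})^{1/p}\Vert_{\Sigma(\overline{X})}=K(1,(\sum_{i=n}^{m}y_i)^{1/p};\overline{X})\preceq K(1,\sum_{i=n}^{m}y_i;\overline{Y})^{1/p}\to 0$. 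The final assertion is immediate: an $L$-convex couple is $r$-convex for some $r>0$ by \cite[Theorem~2.2]{Kal84}, so one applies the above with $p=\min(r,1)$.

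The main obstacle I anticipate is the convexification rule for the $K$-functional, and specifically its reverse inequality, which is the one powering \eqref{eq3}. There the exponent $1/p\ge 1$ forces the quasi-triangle constant $2^{1/p-1}$ and a genuinely nonlinear use of the Riesz decomposition, so one must verify carefully that all constants remain controlled by $p$ and the $p$-convexity constant; a secondary subtlety is checking that passing to least concave majorants keeps the summability $\sum_i\widehat{\psi}_i(1)<\infty$ intact so that Theorem~\ref{K-Divisibility for Lattices} genuinely applies.
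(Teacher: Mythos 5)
Your proposal is correct and follows essentially the same route as the paper: reduce to a Banach lattice couple via the $1/p$-convexification and Proposition \ref{renorming}, pass from the quasi-concave functions $\varphi_i(t^{1/p})^p$ to their least concave majorants, apply Theorem \ref{K-Divisibility for Lattices}, and transform the decomposition back. The only real difference is that the two-sided convexification rule for the $K$-functional, which you derive by hand via Riesz decomposition and the elementary power inequalities, is precisely what the paper imports as \cite[Proposition~3.4]{AsNi21} (together with \cite[Lemma~3.3]{AsNi21} for the reduction to $x\geq 0$), and your sketch of that rule, with constants depending only on $p$ and the $p$-convexity constants, is sound.
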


\begin{proof}
First, one can easily check that the functions $\psi_i(t):=\varphi_i \left(
t^{1/p}\right) ^{p}$, $i=1,2,\dots$, are quasi-concave on $[0,\infty)$.
Hence, we have 
\begin{equation}
\label{quasi-concavity}
\frac12\tilde{\psi}_i(t)\le \psi_i(t)\le \tilde{\psi}_i(t),
\;\;i=1,2,\dots,\;\;\mbox{and}\;\;t>0,
\end{equation}
where $\tilde{\psi}_i$ is the least concave majorant of $\psi_i$, $i=1,2,\dots$ \cite[Theorem~II.1.1]{KPS82}.

Let $x\in \Sigma\left( \overline{X}\right) $ satisfy the conditions of the theorem. Again we can assume that $x\geq 0$ (see \cite[Lemma~3.3]{AsNi21}).
Then, rewriting \eqref{eq1} in the form 
\begin{equation*}
K\left( t^{1/p},x;\overline{X}\right) ^{p}\leq \sum_{i=1}^{\infty }\tilde{\psi}_{i}(t),\;\;t>0,
\end{equation*}%
by \cite[Proposition~3.4]{AsNi21}, we obtain 
\begin{equation}
K\left( t,x;\overline{X^{(1/p)}}\right) \leq 2^{1-p}\sum_{i=1}^{\infty
}\tilde{\psi}(t),\;\;t>0.  \label{p-conf}
\end{equation}

Next, according to Proposition \ref{renorming}, each of the spaces $%
X_{0}^{\left( 1/p\right) }$ and $X_{1}^{\left( 1/p\right) }$ can be equipped
with an equivalent lattice norm such that the new lattices $Y_{0}$ and $%
Y_{1} $ are lattice isomorphic to some Banach lattices. Moreover (see the
proof of Proposition \ref{renorming}), 
\begin{equation*}
\Vert x\Vert _{Y_{k}}\leq \Vert x\Vert _{X_{k}^{\left( 1/p\right) }}\leq
(M_{k}^{(p)})^{p}\Vert x\Vert _{Y_{k}},\;\;k=0,1,
\end{equation*}%
where $M_{k}^{(p)}$ is the $p$-convexity constant of the space $X_{k}$, $%
k=0,1$. Consequently, 
\begin{equation}
K\left( t,x;\overline{Y}\right) \leq K\left( t,x;\overline{X^{(1/p)}}\right)
\leq \max_{k=0,1}(M_{k}^{(p)})^{p}K\left( t,x;\overline{Y}\right) ,
\label{ineq for K-f}
\end{equation}%
whence, by \eqref{p-conf}, 
\begin{equation*}
K\left( t,x;\overline{Y}\right) \leq 2^{1-p}\sum_{i=1}^{\infty }\tilde{\psi}_{i}(t),\;\;t>0.
\end{equation*}%
Now, applying Theorem \ref{K-Divisibility for Lattices} to the Banach couple 
$\overline{Y}$, we find $0\leq x_{i}\in \Sigma (\overline{Y})=\Sigma (%
\overline{X^{\left( 1/p\right) }})=\Sigma \left( \overline{X}\right)
^{\left( 1/p\right) }$,  $i=1,2,\dots $ (see \cite[Proposition~3.4]{AsNi21}), such that 
\begin{equation*}
x=\sum_{i=1}^{\infty }\oplus x_{i}\;\;\mbox{(convergence in}\;\;\Sigma
\left( \overline{X}\right) ^{\left( 1/p\right) })
\end{equation*}%
and 
\begin{equation*}K(t,x_{i};\overline{Y})\leq 2^{1-p}\gamma ^{\prime }\tilde{\psi}_{i}(t).
\end{equation*}%
Hence, we immediately imply \eqref{eq2}. Moreover, using successively \cite[Proposition~3.4]{AsNi21}, inequality \eqref{ineq for K-f} (replacing $x$ with $x_{i}$), the latter inequality, and \eqref{quasi-concavity} with the definition of $\psi _{i}$, we
get for all $i=1,2,\dots $ and $t>0$
\begin{eqnarray*}
K(t^{1/p},x_{i};\overline{X})^{p} &\leq
&\max_{k=0,1}(M_{k}^{(p)})^{p}K\left( t,x_{i};\overline{X^{(1/p)}}\right)
\leq \max_{k=0,1}(M_{k}^{(p)})^{2p}K\left( t,x_{i};\overline{Y}\right) \\
&\leq &2^{1-p}\gamma ^{\prime }\max_{k=0,1}(M_{k}^{(p)})^{2p}\tilde{\psi}_{i}\left(t\right)\leq 2^{2-p}\gamma ^{\prime }\max_{k=0,1}(M_{k}^{(p)})^{2p}\varphi_{i}\left( t^{1/p}\right) ^{p}.
\end{eqnarray*}%
Therefore, we come to inequality \eqref{eq3} with the constant 
$$
\gamma^{\prime \prime }:=4^{1/p}(\gamma ^{\prime})^{1/p}\max_{k=0,1}(M_{k}^{(p)})^{2}.$$

Finally, if $\overline{X}=(X_0,X_1)$ is a couple of $L$-convex quasi-Banach
lattices, choosing $p>0$ so that both lattices $X_0$ and $X_1$ are $p$%
-convex, we get the last assertion of the theorem.
\end{proof}

\begin{remark}
\label{special1} From the proof of Theorem \ref{K-divisibility} it follows
that 
\begin{equation*}
|x|=\sum_{i=1}^{\infty }\oplus x_{i}\;\;\mbox{(convergence in}%
\;\;\Sigma\left( \overline{X}\right) ^{\left( 1/p\right) }).
\end{equation*}
\end{remark}

In the next section we introduce a concept, which will allow us to get a new
description of $K$-monotone spaces with respect to both Banach and $L$%
-convex quasi-Banach lattice couples.

\section{$K\left( p,q\right) $-monotone spaces}

\label{K( p,q)-monotone spaces}

For each $q>0$ and any quasi-Banach space $X$ by $l^{q}(X)$ we denote the set of all sequences $\left\{ x_{i}\right\} _{i=1}^{\infty }\subset X$ such that $\sum_{i=1}^\infty \|x_i\|^q<\infty$.

\begin{definition}
\label{def1} Let $0<q\leq p\leq 1$ and let $X$ be an intermediate quasi-Banach space with respect to a quasi-Banach couple $\overline{X}.$ We say that $X$ is a \textit{$K\left( p,q\right) $-monotone
space} with respect to $\overline{X}$ if the inequality 
\begin{equation}
K\left( t,x;\overline{X}\right) \leq \left( \sum_{i=1}^{\infty }K\left(
t,x_{i};\overline{X}\right) ^{p}\right) ^{1/p},\;\;t>0,  \label{equa1}
\end{equation}%
for some $\left\{ x_{i}\right\} _{i=1}^{\infty }\in l^{q}(X)$ and $x\in
\Sigma \left( \overline{X}\right) $ %
%\begin{eqnarray*}
%\left( 1\right)  &:&K\left( \cdot ,y;\overline{X}\right) \leq \left(
%\sum_{i=1}^{\infty }K\left( \cdot ,x_{i};\overline{X}\right) ^{p}\right)
%^{1/p}<\infty  \\
%&&\left( 2\right) :\left( \sum_{i=1}^{\infty }\left\Vert x_{i}\right\Vert
%_{X}^{q}\right) ^{1/q}<\infty 
%\end{eqnarray*}%
implies that $x\in X$ and 
\begin{equation*}
\left\Vert x\right\Vert _{X}\leq C\left( \sum_{i=1}^{\infty }\left\Vert
x_{i}\right\Vert _{X}^{q}\right) ^{1/q},
\end{equation*}%
with a constant $C>0$ independent of $\left\{ x_{i}\right\} _{i=1}^{\infty }$. Denote by $R_{\left( p,q\right) }\left( X\right) $ the minimal constant
satisfying the latter condition.

Similarly, $X$ is called a \textit{finitely } $K\left( p,q\right) $ \textit{%
monotone space }if the above condition holds uniformly for all finite
sequences $\left\{ x_{i}\right\} _{i=1}^{n}\subset X$. The corresponding
optimal constant (for all $n\in \mathbb{N}$) will be denoted by $R_{\left(
p,q\right) ,f}(X)$.

If $R_{\left( p,q\right) }\left( X\right) =1$ (resp. $R_{\left( p,q\right)
,f}\left( X\right) =1$), then $X$ will be called \textit{exact} (resp. 
\textit{finitely exact}) $K\left( p,q\right) $-monotone space with respect
to the couple $\overline{X}$. Moreover, $Int_{\left( p,q\right) }^{KM}\left( 
\overline{X}\right) $ (resp. $Int_{\left( p,q\right) ,f}^{KM}\left( 
\overline{X}\right) $) is the collection of all $K\left( p,q\right) $%
-monotone (resp. finitely $K\left( p,q\right) $-monotone) spaces with
respect to $\overline{X}$.
\end{definition}

The next result is immediate.

\begin{proposition}
\label{pr6.1} Every finitely $K\left( p,q\right)$-monotone space with
respect to a quasi-Banach couple $\overline{X}$ is uniformly $K$-monotone
with respect to $\overline{X}$.
\end{proposition}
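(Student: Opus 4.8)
The plan is to recognize that uniform $K$-monotonicity is nothing more than the special case of the finite $K(p,q)$-monotonicity condition in which the comparison involves a single element. First I would take arbitrary $x\in X$ and $y\in \Sigma\left(\overline{X}\right)$ satisfying the pointwise domination
\[
K\left(t,y;\overline{X}\right)\leq K\left(t,x;\overline{X}\right),\quad t>0,
\]
which is exactly the hypothesis in the definition of (uniform) $K$-monotonicity, with $y$ playing the role of the dominated element.

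The key step is to invoke Definition \ref{def1} with the one-term finite sequence $x_{1}:=x\in X$ (that is, $n=1$). For such a single-element sequence the right-hand side of \eqref{equa1} collapses to $\left(K\left(t,x;\overline{X}\right)^{p}\right)^{1/p}=K\left(t,x;\overline{X}\right)$, so that the required inequality \eqref{equa1} becomes precisely the assumed domination $K\left(t,y;\overline{X}\right)\leq K\left(t,x;\overline{X}\right)$. Since $X$ is finitely $K(p,q)$-monotone, it follows that $y\in X$ and
\[
\left\Vert y\right\Vert _{X}\leq R_{\left(p,q\right),f}\left(X\right)\left(\left\Vert x\right\Vert _{X}^{q}\right)^{1/q}=R_{\left(p,q\right),f}\left(X\right)\left\Vert x\right\Vert _{X}.
\]

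Because the constant $R_{\left(p,q\right),f}\left(X\right)$ does not depend on the particular $x$ and $y$, this is exactly the uniform $K$-monotonicity of $X$ with respect to $\overline{X}$, with $K$-monotonicity constant bounded by $R_{\left(p,q\right),f}\left(X\right)$. I do not expect any genuine obstacle here beyond aligning the notation of the two definitions: the whole content of the argument is the elementary observation that the $K(p,q)$-domination inequality, which is a reverse-triangle comparison of $K$-functionals, reduces to the plain $K$-functional comparison as soon as only one summand is retained. This is why the statement can be recorded as immediate.
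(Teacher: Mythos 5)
Your argument is correct and is exactly the observation the paper has in mind when it records this proposition as immediate without proof: specializing the finite $K(p,q)$-monotonicity condition to a single-term sequence $x_{1}=x$ collapses \eqref{equa1} to the plain domination $K(t,y;\overline{X})\leq K(t,x;\overline{X})$ and yields $\Vert y\Vert_{X}\leq R_{(p,q),f}(X)\Vert x\Vert_{X}$. Nothing further is needed.
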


\begin{proposition}
\label{pr6.1a} If $X$ is a finitely $K\left( p,q\right)$-monotone space with
respect to a $p$-normable quasi-Banach couple $\overline{X}$, then $X$ is $q$%
-normable.
\end{proposition}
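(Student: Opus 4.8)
The plan is to verify the defining inequality \eqref{equ0} of $q$-normability (with $q$ in place of $p$) directly, by manufacturing an instance of the finite $K(p,q)$-monotonicity hypothesis. Fix arbitrary $x_1,\dots ,x_n\in X$ and put $x:=\sum_{k=1}^{n}x_{k}$. Since $X\subset \Sigma (\overline{X})$ and $\Sigma (\overline{X})$ is a vector space, we have $x\in \Sigma (\overline{X})$, so it suffices to bound $\Vert x\Vert _{X}$ by a constant times $(\sum_{k}\Vert x_{k}\Vert _{X}^{q})^{1/q}$.

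The first and central step is to establish the pointwise estimate
\[
K\left( t,x;\overline{X}\right) \leq B^{\prime }\Big(\sum_{k=1}^{n}K\left( t,x_{k};\overline{X}\right) ^{p}\Big)^{1/p},\quad t>0,
\]
where $B^{\prime }$ denotes the larger of the two $p$-normability constants of $X_{0}$ and $X_{1}$. To this end I would choose, for each $k$, an arbitrary decomposition $x_{k}=x_{k}^{0}+x_{k}^{1}$ with $x_{k}^{i}\in X_{i}$; then $x=\sum_{k}x_{k}^{0}+\sum_{k}x_{k}^{1}$ is an admissible decomposition of $x$, and the $p$-normability of $X_{0}$ and $X_{1}$ gives
\[
K\left( t,x;\overline{X}\right) \leq B^{\prime }\Big[\Big(\sum_{k}\Vert x_{k}^{0}\Vert _{X_{0}}^{p}\Big)^{1/p}+\Big(\sum_{k}(t\Vert x_{k}^{1}\Vert _{X_{1}})^{p}\Big)^{1/p}\Big].
\]
The crucial ingredient is now the reverse Minkowski (superadditivity) inequality for the $\ell ^{p}$-quasinorm valid precisely when $0<p\leq 1$: for nonnegative $a_{k},b_{k}$ one has $(\sum_{k}a_{k}^{p})^{1/p}+(\sum_{k}b_{k}^{p})^{1/p}\leq (\sum_{k}(a_{k}+b_{k})^{p})^{1/p}$. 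Applying it with $a_{k}=\Vert x_{k}^{0}\Vert _{X_{0}}$ and $b_{k}=t\Vert x_{k}^{1}\Vert _{X_{1}}$ yields $K(t,x;\overline{X})\leq B^{\prime }(\sum_{k}(\Vert x_{k}^{0}\Vert _{X_{0}}+t\Vert x_{k}^{1}\Vert _{X_{1}})^{p})^{1/p}$. Since the decompositions of distinct $x_{k}$ are independent and $y\mapsto y^{1/p}$ is increasing, I can pass to the infimum term by term, and each $\inf_{x_{k}=x_{k}^{0}+x_{k}^{1}}(\Vert x_{k}^{0}\Vert _{X_{0}}+t\Vert x_{k}^{1}\Vert _{X_{1}})$ equals $K(t,x_{k};\overline{X})$, giving the displayed estimate.

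Finally I would absorb the constant $B^{\prime }$ using homogeneity of the $K$-functional. Setting $\widetilde{x}_{k}:=B^{\prime }x_{k}$, we have $K(t,\widetilde{x}_{k};\overline{X})=B^{\prime }K(t,x_{k};\overline{X})$, so the estimate becomes $K(t,x;\overline{X})\leq (\sum_{k}K(t,\widetilde{x}_{k};\overline{X})^{p})^{1/p}$, which is exactly the hypothesis \eqref{equa1} of Definition \ref{def1} for the finite sequence $\{\widetilde{x}_{k}\}_{k=1}^{n}$. Finite $K(p,q)$-monotonicity then gives $x\in X$ together with $\Vert x\Vert _{X}\leq R_{(p,q),f}(X)(\sum_{k}\Vert \widetilde{x}_{k}\Vert _{X}^{q})^{1/q}=R_{(p,q),f}(X)\,B^{\prime }(\sum_{k}\Vert x_{k}\Vert _{X}^{q})^{1/q}$, which is precisely the $q$-normability estimate \eqref{equ0} with $B=R_{(p,q),f}(X)\,B^{\prime }$.

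I expect the only genuine obstacle to be getting the $K$-functional inequality right: one must invoke the superadditivity of the $\ell ^{p}$-quasinorm in the correct direction (legitimate exactly because $p\leq 1$) and justify that the infimum over the independent decompositions may be taken inside the sum. Everything else is routine bookkeeping with constants.
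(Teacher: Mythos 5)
Your proof is correct and follows essentially the same route as the paper, whose proof is the one-line remark that it suffices to apply Definition \ref{def1} to the sum $x=\sum_{k=1}^{n}x_{k}$; you have simply filled in the implicit step that $p$-normability of $X_0$ and $X_1$, together with the superadditivity of the $\ell^{p}$-quasinorm for $p\le 1$ and a term-by-term passage to the infimum, yields $K(t,x;\overline{X})\le B'\bigl(\sum_{k}K(t,x_{k};\overline{X})^{p}\bigr)^{1/p}$. The rescaling trick to absorb $B'$ via homogeneity of the $K$-functional is also the natural way to land exactly in the hypothesis \eqref{equa1}.
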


\begin{proof}
It suffices to apply Definition \ref{def1} to a sum $x=\sum_{i=1}^{n}x_{i}$, 
$x_i\in X$, $i=1,\dots,n$.
\end{proof}

\begin{proposition}
\label{pr6.2} Let $X$ be a mutually closed intermediate quasi-Banach space
with respect to a quasi-Banach couple $\overline{X}$. If $X$ is finitely $%
K\left( p,q\right) $-monotone space then $X$ is $K\left( p,q\right) $%
-monotone.
\end{proposition}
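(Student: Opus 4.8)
The plan is to use the relative closedness of $X$ (I read ``mutually closed'' as $X=X^{c}$ isometrically) to reduce the statement to the construction of a family in $X$ that is \emph{bounded} in the $X$-norm and \emph{approximates} $x$ in $\Sigma(\overline{X})$, and then to feed suitable members of this family into the finitely $K(p,q)$-monotone hypothesis. Write $f_{i}(t)=K(t,x_{i};\overline{X})$, $\Phi_{m}(t)=\big(\sum_{i=1}^{m}f_{i}(t)^{p}\big)^{1/p}$ and $\Phi(t)=\big(\sum_{i=1}^{\infty}f_{i}(t)^{p}\big)^{1/p}$. Since $\{x_{i}\}\in l^{q}(X)$ and $q\le p$ we have $\sum_{i}f_{i}(1)^{p}<\infty$, so $\Phi_{m}\uparrow\Phi$ pointwise, $\Phi$ is finite and quasi-concave, and \eqref{equa1} reads $K(\cdot,x;\overline{X})\le\Phi$. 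After subtracting genuine $X_{0}$- and $X_{1}$-components (a routine reduction based on the representation $K(\cdot,x)=\alpha+\beta t+\varphi_{0}$ with $\varphi_{0}\in Conv^{0}$) I would assume $K(\cdot,x;\overline{X})\in Conv^{0}$, i.e. $\lim_{t\to0}K(t,x)=\lim_{t\to\infty}K(t,x)/t=0$.

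Next I would localize $x$ in the parameter $t$. For $0<a<b<\infty$ choose near-optimal decompositions $x=g_{a}+h_{a}=g_{b}+h_{b}$ with $\|g_{s}\|_{X_{0}}+s\|h_{s}\|_{X_{1}}\le 2K(s,x)$ for $s\in\{a,b\}$, and set $y_{a,b}:=g_{b}-g_{a}=h_{a}-h_{b}$, so that $y_{a,b}\in\Delta(\overline{X})\subset X$ and $x-y_{a,b}=g_{a}+h_{b}$. Then
\[
\|x-y_{a,b}\|_{\Sigma(\overline{X})}\le\|g_{a}\|_{X_{0}}+\|h_{b}\|_{X_{1}}\le 2K(a,x)+2K(b,x)/b,
\]
which tends to $0$ as $a\to0$, $b\to\infty$ by the $Conv^{0}$ normalization. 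Decomposing $y_{a,b}=(g_{t}-g_{a})+(h_{t}-h_{b})$ at each scale $t$ gives a fixed constant $C_{0}$ (depending only on the quasi-triangle constant of $\Sigma(\overline{X})$) with $K(t,y_{a,b};\overline{X})\le C_{0}K(t,x;\overline{X})\le C_{0}\Phi(t)$ for all $t$, together with the saturation estimates $K(t,y_{a,b})\le\|y_{a,b}\|_{X_{0}}\preceq K(b,x)$ and $K(t,y_{a,b})\le t\|y_{a,b}\|_{X_{1}}\preceq (t/a)K(a,x)$.

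The crux is then to show that for each fixed pair $(a,b)$ there exist a finite index $m$ and a constant $C_{1}$ \emph{independent of $(a,b)$} with $K(t,y_{a,b};\overline{X})\le C_{1}\Phi_{m}(t)$ for all $t>0$. On the compact range $t\in[a,b]$ this would follow from $K(t,y_{a,b})\le C_{0}\Phi(t)$ and the uniform (Dini) convergence $\Phi_{m}\to\Phi$ on $[a,b]$, which yields $\Phi(t)\le 2\Phi_{m}(t)$ there once $m$ is large. For $t\ge b$ the profile saturates, $K(t,y_{a,b})\preceq K(b,x)\le\Phi(b)\le 2\Phi_{m}(b)\le 2\Phi_{m}(t)$; and for $t\le a$ one compares $K(t,y_{a,b})\preceq(t/a)K(a,x)$ with the quasi-concavity lower bound $\Phi_{m}(t)\ge(t/a)\Phi_{m}(a)\ge\tfrac12(t/a)\Phi(a)$. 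Since $C_{1}\Phi_{m}(t)=\big(\sum_{i=1}^{m}K(t,C_{1}x_{i};\overline{X})^{p}\big)^{1/p}$, applying finite $K(p,q)$-monotonicity to $\{C_{1}x_{i}\}_{i=1}^{m}$ would give $y_{a,b}\in X$ with
\[
\|y_{a,b}\|_{X}\le C_{1}R_{(p,q),f}(X)\Big(\sum_{i=1}^{m}\|x_{i}\|_{X}^{q}\Big)^{1/q}\le C_{1}R_{(p,q),f}(X)\Big(\sum_{i=1}^{\infty}\|x_{i}\|_{X}^{q}\Big)^{1/q},
\]
a bound free of $(a,b)$. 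Letting $a\to0$, $b\to\infty$, the family $\{y_{a,b}\}$ is bounded in $X$ and converges to $x$ in $\Sigma(\overline{X})$, so $X=X^{c}$ forces $x\in X$ with the same bound; hence $R_{(p,q)}(X)\le C_{1}R_{(p,q),f}(X)<\infty$.

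The main obstacle is precisely this all-$t$ comparison $K(t,y_{a,b})\le C_{1}\Phi_{m}(t)$. A naive truncation of the series $\sum_{i}f_{i}^{p}$ fails because $\Phi-\Phi_{m}$ need not be uniformly small in $t$ (the tail may dominate at extreme scales), so one cannot simply dominate $x$ by finitely many $f_{i}$. The role of localizing $x$ first is exactly to replace the possibly unbounded growth of $K(\cdot,x)$ at $0$ and $\infty$ by the linear/flat endpoint behaviour of an element of $\Delta(\overline{X})$, which can be absorbed using only the monotonicity and quasi-concavity of $\Phi_{m}$; this confines the genuine approximation to a compact $t$-interval where Dini's theorem applies. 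I expect the endpoint reduction to $Conv^{0}$ and the verification that the localization estimates hold with a quasi-triangle-only constant to be the most delicate, though routine, technical points.
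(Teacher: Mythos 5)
Your localization-plus-Dini comparison is essentially sound where you flag it as the crux: on $[a,b]$ Dini's theorem does give $\Phi\leq 2\Phi_{m}$ for $m$ large (note $\Phi$ is concave, hence continuous, and $\Phi\geq\Phi(a)>0$ there unless $x=0$), and the saturation estimates outside $[a,b]$ combine with the monotonicity and quasi-concavity of $\Phi_{m}$ as you describe, yielding $K(t,y_{a,b};\overline{X})\leq C_{1}\Phi_{m}(t)$ for all $t>0$ with $C_{1}$ depending only on the quasi-triangle constants, and hence the uniform bound on $\|y_{a,b}\|_{X}$. The genuine gap is the step you dismiss as a routine reduction to $K(\cdot,x;\overline{X})\in Conv^{0}$. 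Without it the scheme breaks: if $\alpha:=\lim_{t\to 0}K(t,x;\overline{X})>0$, then every decomposition $x=g_{a}+h_{a}$ has $\|g_{a}\|_{X_{0}}\geq\alpha$, your upper bound for $\|x-y_{a,b}\|_{\Sigma(\overline{X})}$ tends to $2\alpha+2\beta$ rather than $0$, and indeed one may have $y_{a,b}=0$ for all $a,b$ (take $g_{a}=x$, $h_{a}=0$ whenever these are admissible near-optimal decompositions), so the family $\{y_{a,b}\}$ need not approximate $x$ in $\Sigma(\overline{X})$ at all. To perform the reduction you must decompose the \emph{element} $x$ --- not merely the function $K(\cdot,x;\overline{X})=\alpha+\beta t+\varphi_{0}(t)$ --- into three pieces realizing the three summands. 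That is exactly an instance of (finite, three-term) $K$-divisibility, not a formal manipulation; and even granting it, the two endpoint pieces land only in the Gagliardo completions of $X_{0}$ and $X_{1}$, so their membership in $X$ requires a separate argument (it does follow from finite $K(p,q)$-monotonicity, since each $K(t,x_{i};\overline{X})$ dominates its limit at $0$ and dominates $t$ times its slope at infinity, but you would have to write this out).

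Once finite $K$-divisibility is admitted, the paper uses it far more directly and your localization becomes unnecessary: from $K(t,x;\overline{X})\leq 2^{1/p}\Phi_{N}(t)+2^{1/p}\bigl(\sum_{i=N+1}^{\infty}K(t,x_{i};\overline{X})^{p}\bigr)^{1/p}$ and the finite $K$-divisibility theorem for quasi-Banach couples (\cite[Theorem~3.2.12]{BK91}) one obtains $x=y_{N}+(x-y_{N})$ with $K(\cdot,y_{N};\overline{X})$ dominated by the head and $K(\cdot,x-y_{N};\overline{X})$ dominated by the tail; finite $K(p,q)$-monotonicity then bounds $\|y_{N}\|_{X}$ uniformly, the tail estimate evaluated at $t=1$ gives $\|x-y_{N}\|_{\Sigma(\overline{X})}\to 0$, and mutual closedness finishes. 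Either adopt that argument or supply, explicitly, the three-term divisibility step and the endpoint membership argument that are missing from yours.
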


\begin{proof}
Assume that $\left\{ x_{i}\right\} _{i=1}^{\infty }\in l^{q}(X)$ and $x\in
\Sigma \left( \overline{X}\right) $ satisfy \eqref{equa1}. Then, for any
fixed integer $N$ we have 
\begin{eqnarray*}
K\left( t,x;\overline{X}\right) &\leq &2^{1/p}\left( \sum_{i=1}^{N}K\left(
t,x_{i};\overline{X}\right) ^{p}\right) ^{1/p} \\
&+&2^{1/p}\left( \sum_{i=N+1}^{\infty }K\left( t,x_{i};\overline{X}\right)
^{p}\right) ^{1/p},\;\;t>0.
\end{eqnarray*}%
By using the finite version of $K$-divisibility \cite[Theorem~3.2.12]{BK91}
for quasi-Banach couples, for any fixed positive integer $N$, we can find $%
y_{N}\in \Sigma \left( \overline{X}\right) $ such that 
\begin{eqnarray*}
K\left( t,y_{N};\overline{X}\right) &\leq &\gamma ^{\prime 1/p}\left(
\sum_{i=1}^{N}K\left( t,x_{i};\overline{X}\right) ^{p}\right) ^{1/p},\;\;t>0,
\\
K\left( t,x-y_{N};\overline{X}\right) &\leq &\gamma ^{\prime 1/p}\left(
\sum_{i=N+1}^{\infty }K\left( t,x_{i};\overline{X}\right) ^{p}\right)
^{1/p},\;\;t>0.
\end{eqnarray*}%
By the assumption, from the first inequality it follows that $y_{N}\in X$, $%
N=1,2,\dots $, and 
\begin{equation*}
\Vert y_{N}\Vert _{X}\leq \gamma ^{\prime 1/p}R_{(p,q),f}(X)\left(
\sum_{i=1}^{\infty }\left\Vert x_{i}\right\Vert _{X}^{q}\right)
^{1/q},\;\;N=1,2,\dots,
\end{equation*}
that is, the sequence $\{y_N\}_{N=1}^\infty$ is bounded in $X$. 
Furthermore, applying the second one for $t=1$, we get 
\begin{equation*}
\left\Vert x-y_{N}\right\Vert _{\Sigma \left( \overline{X}\right) }\leq
\gamma ^{\prime 1/p}\left( \sum_{i=N+1}^{\infty }\left\Vert x_{i}\right\Vert
_{\Sigma \left( \overline{X}\right) }^{p}\right) ^{1/p}\leq \gamma ^{\prime
1/p}M_{2}\left( \sum_{i=N+1}^{\infty }\left\Vert x_{i}\right\Vert
_{X}^{p}\right) ^{1/p},
\end{equation*}
where $M_{2}$ is the norm of the inclusion $X\subset \Sigma \left( \overline{%
X}\right) $. Consequently, since the right-hand side of this inequality
tends to zero as $N\rightarrow \infty $ and $X$ is mutually closed, we
conclude that $x\in X^{c}=X$ and 
\begin{equation*}
\Vert x\Vert _{X}\leq \gamma ^{\prime 1/p}R_{(p,q),f}(X)\left(
\sum_{i=1}^{\infty }\left\Vert x_{i}\right\Vert _{X}^{q}\right) ^{1/q}.
\end{equation*}
\end{proof}

Since $l^{s}\left( X\right) \subseteq l^{q}\left( X\right) $ if $0<s\leq q$, we get

\begin{proposition}
\label{pr6.3} Let $0<s\leq q$. Then, every $K\left( p,q\right)$-monotone
space with respect to a quasi-Banach couple $\overline{X}$ is also $K\left(
p,s\right)$-monotone with respect to $\overline{X}$.
\end{proposition}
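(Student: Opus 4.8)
The plan is to exploit the inclusion $l^{s}(X)\subseteq l^{q}(X)$ noted just above the statement, together with the elementary nesting inequality for $\ell^{r}$-norms. Suppose that $X$ is $K(p,q)$-monotone with respect to $\overline{X}$, and let $\{x_{i}\}_{i=1}^{\infty}\in l^{s}(X)$ and $x\in \Sigma\left(\overline{X}\right)$ satisfy
\begin{equation*}
K\left(t,x;\overline{X}\right)\leq \left(\sum_{i=1}^{\infty}K\left(t,x_{i};\overline{X}\right)^{p}\right)^{1/p},\;\;t>0.
\end{equation*}
Since $0<s\leq q$, the summability $\sum_{i=1}^{\infty}\|x_{i}\|_{X}^{s}<\infty$ forces $\|x_{i}\|_{X}\to 0$, hence $\|x_{i}\|_{X}^{q}\leq \|x_{i}\|_{X}^{s}$ for all large $i$; consequently $\{x_{i}\}_{i=1}^{\infty}\in l^{q}(X)$, and this very sequence satisfies the defining hypothesis of $K(p,q)$-monotonicity. (Note also that $s\leq q\leq p\leq 1$, so $K(p,s)$-monotonicity is a meaningful notion.)

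First I would apply the $K(p,q)$-monotonicity of $X$ to this sequence, which yields $x\in X$ together with the bound
\begin{equation*}
\|x\|_{X}\leq R_{(p,q)}(X)\left(\sum_{i=1}^{\infty}\|x_{i}\|_{X}^{q}\right)^{1/q}.
\end{equation*}
It then remains only to pass from the $l^{q}$-norm on the right to the $l^{s}$-norm. This is immediate from the standard nesting inequality $\left(\sum a_{i}^{q}\right)^{1/q}\leq \left(\sum a_{i}^{s}\right)^{1/s}$, valid for any non-negative sequence $(a_{i})$ whenever $0<s\leq q$, applied with $a_{i}=\|x_{i}\|_{X}$. Combining the two displays gives
\begin{equation*}
\|x\|_{X}\leq R_{(p,q)}(X)\left(\sum_{i=1}^{\infty}\|x_{i}\|_{X}^{s}\right)^{1/s},
\end{equation*}
so that $X$ is $K(p,s)$-monotone with $R_{(p,s)}(X)\leq R_{(p,q)}(X)$.

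There is no genuine obstacle here; the result is an essentially formal consequence of the two inclusions $l^{s}(X)\subseteq l^{q}(X)$ and $\|\cdot\|_{\ell^{q}}\leq \|\cdot\|_{\ell^{s}}$. The only point meriting a word of care is verifying that the membership $\{x_{i}\}\in l^{s}(X)$ indeed produces a sequence in $l^{q}(X)$, so that the hypothesis of $K(p,q)$-monotonicity applies to exactly the same data, and that the index constraints defining the two notions remain compatible.
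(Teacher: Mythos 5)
Your argument is correct and matches the paper's (the paper simply records the inclusion $l^{s}(X)\subseteq l^{q}(X)$ and states the proposition without further proof). Your additional details — verifying the inclusion via $\|x_{i}\|_{X}\to 0$ and invoking the nesting inequality $\|\cdot\|_{\ell^{q}}\leq\|\cdot\|_{\ell^{s}}$ to pass to the $l^{s}$-bound with $R_{(p,s)}(X)\leq R_{(p,q)}(X)$ — are exactly the routine steps the paper leaves implicit.
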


\begin{proposition}
\label{pr6.5} Every finitely $K\left( p,q\right) $-monotone (resp. $K\left(
p,q\right) $-monotone) space with respect to a quasi-Banach couple $%
\overline{X}$ can be renormed to be exact finitely $K\left( p,q\right) $%
-monotone (resp. $K\left( p,q\right) $-monotone).
\end{proposition}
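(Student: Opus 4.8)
The plan is to renorm $X$ by the natural \textquotedblleft minimal admissible representation\textquotedblright\ functional attached to the defining inequality \eqref{equa1}. In the $K(p,q)$-monotone case I would set, for $x\in\Sigma(\overline{X})$,
\[
\Vert x\Vert_X^{\circ}:=\inf\Big\{\Big(\sum_{i=1}^{\infty}\Vert x_i\Vert_X^q\Big)^{1/q}\ :\ \{x_i\}_{i=1}^{\infty}\in l^q(X),\ K(t,x;\overline{X})\le\Big(\sum_{i=1}^{\infty}K(t,x_i;\overline{X})^p\Big)^{1/p}\ \text{for all }t>0\Big\},
\]
with the convention $\inf\emptyset=+\infty$; for the finitely $K(p,q)$-monotone case I would take the very same infimum but restricted to finite admissible families $\{x_i\}_{i=1}^{n}$, $n\in\mathbb{N}$. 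The underlying set is unchanged: if $x\in X$ the one-term family $x_1=x$ is admissible, while if $\Vert x\Vert_X^{\circ}<\infty$ then some admissible $\{x_i\}\in l^q(X)$ exists and $K(p,q)$-monotonicity forces $x\in X$.

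First I would check that $\Vert\cdot\Vert_X^{\circ}$ is an equivalent quasi-norm. The trivial representation gives $\Vert x\Vert_X^{\circ}\le\Vert x\Vert_X$, while applying the $K(p,q)$-monotonicity hypothesis to an arbitrary admissible family and taking the infimum gives $\Vert x\Vert_X\le R_{(p,q)}(X)\,\Vert x\Vert_X^{\circ}$. Positive homogeneity holds because $\{\lambda x_i\}$ is admissible for $\lambda x$ whenever $\{x_i\}$ is admissible for $x$ (both $K(t,\cdot\,;\overline{X})$ and the two sums are positively homogeneous), and definiteness together with a quasi-triangle inequality follow at once from the two-sided equivalence with $\Vert\cdot\Vert_X$. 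Hence $(X,\Vert\cdot\Vert_X^{\circ})$ is again a quasi-Banach space, intermediate with respect to $\overline{X}$.

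The core of the argument is exactness. Given $x\in\Sigma(\overline{X})$ and $\{y_j\}\in l^q(X)$ with $K(t,x;\overline{X})\le\big(\sum_j K(t,y_j;\overline{X})^p\big)^{1/p}$, I would fix $\varepsilon>0$ and choose for each $j$ an admissible representation $\{y_j^{(i)}\}_i$ of $y_j$ with $\big(\sum_i\Vert y_j^{(i)}\Vert_X^q\big)^{1/q}\le\Vert y_j\Vert_X^{\circ}+\varepsilon 2^{-j}$. Raising each defining inequality to the $p$-th power and summing in $j$,
\[
K(t,x;\overline{X})^p\le\sum_{j}K(t,y_j;\overline{X})^p\le\sum_{i,j}K(t,y_j^{(i)};\overline{X})^p,\qquad t>0,
\]
so the reindexed double family $\{y_j^{(i)}\}_{i,j}$ is itself an admissible representation of $x$. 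Rearranging the nonnegative double series $\sum_{i,j}\Vert y_j^{(i)}\Vert_X^q=\sum_j\big(\sum_i\Vert y_j^{(i)}\Vert_X^q\big)$ then yields
\[
\Vert x\Vert_X^{\circ}\le\Big(\sum_{i,j}\Vert y_j^{(i)}\Vert_X^q\Big)^{1/q}\le\Big(\sum_{j}\big(\Vert y_j\Vert_X^{\circ}+\varepsilon 2^{-j}\big)^q\Big)^{1/q}.
\]
Letting $\varepsilon\to0$ gives $\Vert x\Vert_X^{\circ}\le\big(\sum_j(\Vert y_j\Vert_X^{\circ})^q\big)^{1/q}$, which is finite, so $x\in X$. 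Thus the optimal constant of $(X,\Vert\cdot\Vert_X^{\circ})$ is at most $1$; the one-term family forces it to be at least $1$, so it equals $1$. The finitely $K(p,q)$-monotone case is identical, concatenating finitely many finite admissible families into one finite family.

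The step I expect to be the main obstacle is the legitimacy of the concatenation together with the constant bookkeeping. One must confirm that passing to the $p$-th power and summing preserves admissibility; this uses only the additivity of $t\mapsto\sum K(t,\cdot\,;\overline{X})^p$ and \emph{not} any concavity, so it is clean, but it is exactly where the shape $\big(\sum K(t,\cdot)^p\big)^{1/p}$ of \eqref{equa1} (rather than a concave sum) is essential. One must also check that the double family remains in $l^q(X)$ and that the choice $\varepsilon 2^{-j}$ keeps $\sum_j(\Vert y_j\Vert_X^{\circ}+\varepsilon 2^{-j})^q$ convergent and tending to $\sum_j(\Vert y_j\Vert_X^{\circ})^q$ as $\varepsilon\to0$. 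A secondary point, already handled in the second paragraph, is to ensure that the infimum functional is a genuine quasi-norm rather than merely a quasi-seminorm, which is guaranteed by its equivalence with $\Vert\cdot\Vert_X$.
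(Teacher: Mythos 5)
Your proposal is correct and follows essentially the same route as the paper: the paper also renorms $X$ by the infimum functional $\psi(x)=\inf\bigl(\sum_i\Vert x_i\Vert_X^q\bigr)^{1/q}$ over admissible representations, establishes the two-sided equivalence $R_{(p,q),f}(X)^{-1}\Vert x\Vert_X\leq\psi(x)\leq\Vert x\Vert_X$, and obtains exactness by concatenating $\varepsilon$-optimal representations with geometrically decaying slack (the paper puts the slack $\varepsilon 2^{-i}$ on the $q$-th powers rather than on the roots, an immaterial difference). The only cosmetic divergence is that the paper writes out only the finite case and declares the infinite case "completely similar," whereas you treat both explicitly.
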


\begin{proof}
Since the proofs of both assertions are completely similar, we will deal
only with the first of them.

For each $x\in X$ we put 
\begin{equation*}
\psi \left( x\right) :=\inf \left( \sum_{i=1}^{n}\left\Vert x_{i}\right\Vert
_{X}^{q}\right) ^{1/q},
\end{equation*}%
where the infimum is taken over all finite sequences $\left\{ x_{i}\right\}
_{i=1}^{n}\subset X$ such that 
\begin{equation}
K\left( t,x;\overline{X}\right) \leq \left( \sum_{i=1}^{n}K\left( t,x_{i};%
\overline{X}\right) ^{p}\right) ^{1(p},\;\;t>0.  \label{renorm}
\end{equation}%
Then $R_{(p,q),f}(X)^{-1}\left\Vert x\right\Vert _{X}\leq \psi \left( x\right) \leq
\left\Vert x\right\Vert _{X}$ and hence the functional $x\mapsto \psi \left(
x\right) $ is a quasi-norm equivalent to the original quasi-norm $x\mapsto
\Vert x\Vert _{X}$.

Next, assume that we have \eqref{renorm} and $\left( \sum_{i=1}^{n}\psi
\left( x_{i}\right) ^{q}\right) ^{1/q}<\infty $. By the definition of $\psi
(x)$, for every $\varepsilon >0$ and each $i=1,2,\dots ,n$ we can select $%
m_{i}\in \mathbb{N}$ and $x_{i}^{j}\in X$, $j=1,2,\dots ,m_{i}$, so that 
\begin{equation*}
K\left( \cdot ,x_{i};\overline{X}\right) ^{p}\leq \sum_{j=1}^{m_{i}}K\left(
\cdot ,x_{i}^{j};\overline{X}\right) ^{p}\;\;\;\mbox{and}\;\;\psi \left(
x_{i}\right) ^{q}\geq \sum_{j=1}^{m_{i}}\left\Vert x_{i}^{j}\right\Vert
_{X}^{q}-\frac{\varepsilon }{2^{i}}.
\end{equation*}%
Hence, from \eqref{renorm} it follows 
\begin{equation*}
K\left( t,x;\overline{X}\right) \leq \left(
\sum_{i=1}^{n}\sum_{j=1}^{m_{i}}K\left( t,x_{i}^{j};\overline{X}\right)
^{p}\right) ^{1/p},\;\;t>0,
\end{equation*}%
and 
\begin{equation*}
\sum_{i=1}^{n}\psi \left( x_{i}\right) ^{q}\geq
\sum_{i=1}^{n}\sum_{j=1}^{m_{i}}\left\Vert x_{i}^{j}\right\Vert
^{q}-\varepsilon \geq \psi (x)^{q}-\varepsilon .
\end{equation*}%
Since $\varepsilon >0$ is arbitrary, these estimates imply that $X$ equipped
with the quasi-norm $\psi \left( x\right) $ is an exact finitely $K\left(
p,q\right) $-monotone space with respect to $\overline{X}$.
\end{proof}

\begin{proposition}
\label{pr6.6} Let $X$ be a $K\left( p,q\right) $-monotone quasi-Banach space
with respect to a quasi-Banach couple $\overline{X}=(X_{0},X_{1})$. Then $X$
can be equipped with an equivalent norm so that it becomes a normalized
intermediate and exact $K\left( p,q\right) $-monotone space with respect to $%
\overline{X}$ for some $s\in (0,q]$.
\end{proposition}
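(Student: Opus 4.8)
The plan is to first bring $X$ into exact $K(p,q)$-monotone form and then enlarge its quasi-norm by just enough to force the lower embedding inequality $\|x\|_{\Sigma(\overline{X})}\le\|x\|_X$, exploiting the key (and somewhat surprising) fact that the sum $\Sigma(\overline{X})$ is \emph{itself} an exact $K(p,q)$-monotone space. Taking a maximum then enforces normalization without spoiling exactness.

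First I would invoke Proposition \ref{pr6.5} to replace the original quasi-norm by an equivalent one $\|\cdot\|_{1}$ for which $X$ is exact $K(p,q)$-monotone. Since $X$ is intermediate, the inclusion $\Delta(\overline{X})\hookrightarrow(X,\|\cdot\|_{1})$ is bounded; letting $M$ be its norm and dividing $\|\cdot\|_{1}$ by $M$ (a rescaling by a positive constant, which scales both sides of the defining inequality \eqref{equa1} equally and so preserves exact $K(p,q)$-monotonicity) I may assume in addition that $\|x\|_{1}\le\|x\|_{\Delta(\overline{X})}$ for all $x\in\Delta(\overline{X})$.

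Next comes the heart of the argument. Recall that $\|x\|_{\Sigma(\overline{X})}=K(1,x;\overline{X})$. Hence, whenever $K(t,x;\overline{X})\le\big(\sum_{i}K(t,x_{i};\overline{X})^{p}\big)^{1/p}$ holds for all $t>0$, evaluation at $t=1$ gives
\[
\|x\|_{\Sigma(\overline{X})}\le\Big(\sum_{i}\|x_{i}\|_{\Sigma(\overline{X})}^{p}\Big)^{1/p}\le\Big(\sum_{i}\|x_{i}\|_{\Sigma(\overline{X})}^{q}\Big)^{1/q},
\]
where the last step uses $q\le p$ together with the inclusion $l^{p}\subset l^{q}$. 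Thus $\Sigma(\overline{X})$ is exact $K(p,q)$-monotone. I would then set $\|x\|^{*}:=\max\big(\|x\|_{\Sigma(\overline{X})},\,\|x\|_{1}\big)$, which is again a quasi-norm and is equivalent to the original one, since $\|x\|_{1}\le\|x\|^{*}\le\max(M_{2},1)\|x\|_{1}$, where $M_{2}$ is the norm of $(X,\|\cdot\|_{1})\hookrightarrow\Sigma(\overline{X})$. If $\{x_{i}\}$ satisfies \eqref{equa1} with $\sum_{i}(\|x_{i}\|^{*})^{q}<\infty$, then both $\|\cdot\|_{1}$ and $\|\cdot\|_{\Sigma(\overline{X})}$ obey the exact $K(p,q)$ estimate against $\big(\sum_{i}(\|x_{i}\|^{*})^{q}\big)^{1/q}$ (and $x\in X$ because $\{x_{i}\}\in l^{q}(X,\|\cdot\|_{1})$), so their maximum does as well; hence $R_{(p,q)}(X,\|\cdot\|^{*})=1$.

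Finally I would read off the normalization: by construction $\|x\|_{\Sigma(\overline{X})}\le\|x\|^{*}$, so $(X,\|\cdot\|^{*})\hookrightarrow\Sigma(\overline{X})$ is a contraction, while for $x\in\Delta(\overline{X})$ one has $\|x\|_{1}\le\|x\|_{\Delta(\overline{X})}$ and the universal bound $\|x\|_{\Sigma(\overline{X})}\le\|x\|_{\Delta(\overline{X})}$, whence $\|x\|^{*}\le\|x\|_{\Delta(\overline{X})}$ and $\Delta(\overline{X})\hookrightarrow(X,\|\cdot\|^{*})$ is a contraction too. This yields the assertion in the sharpest form $s=q$, and a fortiori for every $s\in(0,q]$ by Proposition \ref{pr6.3} (in its exact form, via $l^{s}(X)\subset l^{q}(X)$ and $(\sum a_{i}^{q})^{1/q}\le(\sum a_{i}^{s})^{1/s}$). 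I expect the only genuinely delicate points to be the two observations that underpin Step~3 of the construction --- that $\Sigma(\overline{X})$ is automatically exact $K(p,q)$-monotone and that the maximum of exact $K(p,q)$-monotone quasi-norms is again exact $K(p,q)$-monotone --- since these are exactly what make the lower normalization ``free''; the remaining verifications (equivalence of quasi-norms, contractivity of the two inclusions) are routine.
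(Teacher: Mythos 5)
Your proof is correct, and it reaches the stated conclusion in the sharper form $s=q$. The route differs from the paper's in how the normalization is enforced. The paper first shrinks $q$ to an $s\in(0,q]$ for which $X$ and $\overline{X}$ are $s$-normable, then runs the Aronszajn--Gagliardo renorming $Y_{X}=\left(X+\Delta(\overline{X})\right)\cap\Sigma(\overline{X})$ to make $X$ a normalized intermediate space, and only afterwards applies the $\psi$-functional of Proposition \ref{pr6.5}, checking that $\psi\le\Vert\cdot\Vert_{\Delta(\overline{X})}$ on $\Delta(\overline{X})$ and that $\Vert\cdot\Vert_{\Sigma(\overline{X})}\le\psi$ because every admissible decomposition controls $K(1,x;\overline{X})$. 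You invert the order: apply Proposition \ref{pr6.5} first, fix the upper embedding by a harmless rescaling (which indeed preserves exactness, since both sides of the defining inequality scale identically), and fix the lower embedding by taking the maximum with $\Vert\cdot\Vert_{\Sigma(\overline{X})}$. The two arguments share the same kernel --- evaluation of \eqref{equa1} at $t=1$ shows the $\Sigma(\overline{X})$-quasi-norm automatically satisfies the exact $K(p,q)$ estimate --- but your packaging (``$\Sigma(\overline{X})$ is itself exact $K(p,q)$-monotone, and a maximum of exact $K(p,q)$-monotone quasi-norms is exact $K(p,q)$-monotone'') replaces the Aronszajn--Gagliardo construction by elementary manipulations, and it avoids the passage to a smaller $s$ altogether: $s$-normability plays no role in your argument, and since the proposition only asks for \emph{some} $s\in(0,q]$, taking $s=q$ is legitimate (and a fortiori gives every smaller $s$, as you note). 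All the individual verifications --- that $x\in X$ follows because $\{x_{i}\}\in l^{q}(X,\Vert\cdot\Vert_{1})$, that $\Vert\cdot\Vert^{*}$ is equivalent to $\Vert\cdot\Vert_{1}$, and that $\Vert x\Vert_{\Sigma(\overline{X})}\le\Vert x\Vert_{\Delta(\overline{X})}$ keeps the maximum below $\Vert\cdot\Vert_{\Delta(\overline{X})}$ on the intersection --- check out.
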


\begin{proof}
First, we select $s\in (0,q]$ so that the couple $\overline{X}$ and the
space $X$ are $s$-normable. Then, according to Proposition \ref{pr6.3}, $X$
is $K\left( p,s\right) $-monotone with respect to $\overline{X}$. Next, we
follow a reasoning due to Aronszajn-Gagliardo \cite{AG65}.

Since $X$ is an intermediate space with respect to $\overline{X}$, the
canonical inclusions $j_{X}:X\rightarrow \Sigma \left( \overline{X}\right) $%
, $i_{X}:\Delta \left( \overline{X}\right) \rightarrow X$ are bounded.
Consider the quasi-normed space $Y_{X}:=\left( X+\Delta \left( \overline{X}%
\right) \right) \cap \Sigma \left( \overline{X}\right) .$ Then, we have%
\textbf{.}%
\begin{equation*}
\left\Vert x\right\Vert _{\Sigma \left( \overline{X}\right) }\leq \left\Vert
x\right\Vert _{Y_{X}}\leq \max \left\{ \left\Vert x\right\Vert _{\Sigma
\left( \overline{X}\right) },\left\Vert x\right\Vert _{\Delta \left( 
\overline{X}\right) }\right\} =\left\Vert x\right\Vert _{\Delta \left( 
\overline{X}\right) }.
\end{equation*}%
Also, from the inequalities 
\begin{equation*}
\left\Vert x\right\Vert _{\Sigma \left( \overline{X}\right) }\leq \left\Vert
j_{X}\right\Vert _{X}\left\Vert x\right\Vert _{X}\;\;\mbox{and}%
\;\;\left\Vert x\right\Vert _{X+\Delta \left( \overline{X}\right) }\leq
\left\Vert x\right\Vert _{X},
\end{equation*}%
it follows 
\begin{equation*}
\left\Vert x\right\Vert _{Y_{X}}\leq \max \left( 1,\left\Vert
j_{X}\right\Vert \right) \left\Vert x\right\Vert _{X}.
\end{equation*}%
On the other hand, if $x=x_{0}+x_{1}\in X+\Delta \left( \overline{X}\right) $%
, $x_{0}\in X$, $x_{1}\in \Delta \left( \overline{X}\right) $, then 
\begin{equation*}
\left\Vert x\right\Vert _{X}\leq \left\Vert x_{0}\right\Vert _{X}+\left\Vert
i_{X}\right\Vert \left\Vert x_{1}\right\Vert _{\Delta \left( \overline{X}%
\right) },
\end{equation*}%
which implies that 
\begin{equation*}
\left\Vert x\right\Vert _{X}\leq \max \left( 1,\left\Vert i_{X}\right\Vert
\right) \left\Vert x\right\Vert _{X+\Delta \left( \overline{X}\right) }\leq
\max \left( 1,\left\Vert i_{X}\right\Vert \right) \left\Vert x\right\Vert
_{Y_{X}}.
\end{equation*}%
Thus, we can assume (after a suitable renorming) that $X$ is a normalized
intermediate space with respect to the couple $\overline{X}$.

Let now $\psi \left( x\right) $ denote the functional defined in the proof
of Proposition \ref{pr6.5} that provides $X$ with an equivalent norm such
that $X$ is an exact $K\left( p,s\right) $-monotone space with respect to $%
\overline{X}$. Then, by construction, 
\begin{equation*}
\psi \left( x\right) \leq \left\Vert x\right\Vert _{X}\leq \left\Vert
x\right\Vert _{\Delta \left( \overline{X}\right) },\;\;x\in \Delta \left( 
\overline{X}\right) .
\end{equation*}%
Furthermore, if $\left\{ x_{i}\right\} \in l^{s}(X)$ and $K\left( \cdot ,x:%
\overline{X}\right) \leq \left( \sum_{i=1}^{\infty }K\left( \cdot ,x_{i};%
\overline{X}\right) ^{p}\right) ^{1/p}$, we have 
\begin{equation*}
\left\Vert x\right\Vert _{\Sigma \left( \overline{X}\right) }\leq \left\Vert
x\right\Vert _{X}\leq \left( \sum_{i=1}^{\infty }\left\Vert x_{i}\right\Vert
_{X}^{s}\right) ^{1/s}.
\end{equation*}%
Hence, by the definition of $\psi (x)$, we get $\left\Vert x\right\Vert
_{\Sigma \left( \overline{X}\right) }\leq \psi \left( x\right) .$ Thus, $%
(X,\psi (\cdot ))$ is a normalized intermediate space with respect to the
couple $\overline{X}$ as well. This completes the proof.
\end{proof}

\begin{definition}
A normalized intermediate and exact $K\left( p,q\right)$-monotone space with
respect to a quasi-Banach couple $\overline{X}$ will be called a \textit{{%
normalized } $K\left( p,q\right)$-monotone space with respect to $\overline{X%
}$. }
\end{definition}

\begin{corollary}
Every $K\left( p,q\right)$-monotone space with respect to a quasi-Banach
couple $\overline{X}$ has an equivalent norm so that it is a normalized $%
K\left( p,s\right)$-monotone space for some $0<s\leq q$.
\end{corollary}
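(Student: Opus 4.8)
The plan is to obtain this corollary as an immediate consequence of Proposition~\ref{pr6.6} together with the terminology fixed in the Definition just above it. Given a $K(p,q)$-monotone space $X$ with respect to $\overline{X}$, I would first invoke Proposition~\ref{pr6.6}: it produces an equivalent norm under which $X$ simultaneously becomes a normalized intermediate space and an exact $K(p,s)$-monotone space, for some $s\in(0,q]$ (namely any $s$ for which both $\overline{X}$ and $X$ are $s$-normable). Nothing further needs to be constructed, since the substantive work has already been done there.

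By the Definition immediately preceding this corollary, a space that is at once normalized intermediate and exact $K(p,s)$-monotone is precisely what we call a normalized $K(p,s)$-monotone space with respect to $\overline{X}$. Hence the renormed $X$ is a normalized $K(p,s)$-monotone space for some $0<s\le q$, which is exactly the assertion to be proved. In short, the corollary is a purely linguistic repackaging of Proposition~\ref{pr6.6} under the name introduced in the Definition.

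Accordingly, the only point to check is that the hypotheses and conclusions genuinely match: both the proposition and the corollary require only that $X$ be $K(p,q)$-monotone with respect to a quasi-Banach couple, and the conclusion of Proposition~\ref{pr6.6} lists exactly the two properties bundled together in the Definition. I therefore do not expect any real obstacle here; the genuine content—passing from $q$ to an $s$-normable exponent via Proposition~\ref{pr6.3}, the Aronszajn--Gagliardo renorming to achieve normalization, and the $\psi$-functional construction of Proposition~\ref{pr6.5} to achieve exactness—was already established in the proof of Proposition~\ref{pr6.6}, and this corollary simply records that result in the new terminology.
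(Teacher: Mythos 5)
Your proposal is correct and matches the paper exactly: the corollary is stated without proof precisely because it is the content of Proposition~\ref{pr6.6} restated in the terminology of the preceding Definition. You correctly identify that all the substantive work (passing to an $s$-normable exponent, the Aronszajn--Gagliardo normalization, and the $\psi$-functional for exactness) lives in the proof of Proposition~\ref{pr6.6}, so nothing further is needed here.
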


The next proposition establishes a connection between $K\left( p,q\right) $-monotonicity of a $K$-space with respect to a quasi-Banach couple and the
corresponding $K$-method parameter. We will say that a quasi-Banach lattice $E$ of
measurable functions on $(0,\infty )$ is $K\left( p,q\right) $-monotone if $%
E $ is $K\left( p,q\right) $-monotone with respect to the couple $\overline{%
L_{\infty }}$.

\begin{proposition}
\label{pr6.7} Let $E\in Int\left( \overline{L_\infty}\right) $ and let $%
\overline{X}$ be a quasi-Banach couple.

(i) If $E$ is $K\left( p,q\right)$-monotone, then so is $\overline{X}_{E:K}$
(with respect to $\overline{X}$).

(ii) If $\overline{X}_{E:K}$ is $K\left( p,q\right)$-monotone with respect
to $\overline{X}$ and $\overline{X}$ is $Conv$-abundant, then $E$ is $%
K\left( p,q\right)$-monotone.
\end{proposition}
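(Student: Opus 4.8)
The plan is to transfer $K(p,q)$-monotonicity across the correspondence $x\leftrightarrow K(\cdot ,x;\overline{X})$, using the basic fact recalled in Subsection \ref{Prel3} that for the couple $\overline{L^{\infty}}$ the $K$-functional of a function is its least concave majorant; in particular every $f\in Conv$ satisfies $K(t,f;\overline{L^{\infty}})=f(t)$. Part (i) is then almost immediate. Given $\{x_{i}\}\in l^{q}(\overline{X}_{E:K})$ and $x\in\Sigma(\overline{X})$ for which \eqref{equa1} holds, I would set $f:=K(\cdot ,x;\overline{X})$ and $f_{i}:=K(\cdot ,x_{i};\overline{X})$, both in $Conv$. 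Since these functions are concave and nonnegative, $K(t,f;\overline{L^{\infty}})=f(t)$ and $K(t,f_{i};\overline{L^{\infty}})=f_{i}(t)$, so \eqref{equa1} is precisely the defining inequality of $K(p,q)$-monotonicity for $f,f_{i}$ in the couple $\overline{L^{\infty}}$. As $\|f_{i}\|_{E}=\|x_{i}\|_{\overline{X}_{E:K}}$, we have $\{f_{i}\}\in l^{q}(E)$, and the $K(p,q)$-monotonicity of $E$ gives $f\in E$ with $\|f\|_{E}\leq C(\sum_{i}\|f_{i}\|_{E}^{q})^{1/q}$. Reading this back through $\|f\|_{E}=\|x\|_{\overline{X}_{E:K}}$ yields $x\in\overline{X}_{E:K}$ and the required estimate with the same constant $C$; note that this direction uses neither $Conv$-abundance nor any further property of $E$.

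For part (ii) I would argue in the reverse direction. Take $\{g_{i}\}\in l^{q}(E)$ and $g\in\Sigma(\overline{L^{\infty}})$ satisfying the defining inequality for $E$, and put $\tilde{g}:=K(\cdot ,g;\overline{L^{\infty}})$ and $\tilde{g}_{i}:=K(\cdot ,g_{i};\overline{L^{\infty}})$, all in $Conv$. Since $\overline{X}$ is $Conv$-abundant, I can choose $x,x_{i}\in\Sigma(\overline{X})$ with $K(\cdot ,x;\overline{X})\cong\tilde{g}$ and $K(\cdot ,x_{i};\overline{X})\cong\tilde{g}_{i}$, where the equivalence constant $\gamma$ is universal (independent of the function). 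Chaining these equivalences through the hypothesis $\tilde{g}(t)\leq(\sum_{i}\tilde{g}_{i}(t)^{p})^{1/p}$ gives $K(t,x;\overline{X})\leq\gamma^{2}(\sum_{i}K(t,x_{i};\overline{X})^{p})^{1/p}$, so after rescaling to $x':=\gamma^{-2}x$ the inequality \eqref{equa1} holds exactly for $x'$ and $\{x_{i}\}$. Because $\overline{L^{\infty}}$ is a Calder\'{o}n--Mityagin couple and $E$ is a function lattice in $Int(\overline{L^{\infty}})$, $E$ is (uniformly) $K$-monotone with respect to $\overline{L^{\infty}}$, which gives $\|\tilde{g}_{i}\|_{E}\cong\|g_{i}\|_{E}$; hence $\|x_{i}\|_{\overline{X}_{E:K}}=\|K(\cdot ,x_{i};\overline{X})\|_{E}\preceq\|g_{i}\|_{E}$ and $\{x_{i}\}\in l^{q}(\overline{X}_{E:K})$.

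Now applying the assumed $K(p,q)$-monotonicity of $\overline{X}_{E:K}$ to $x'$ and $\{x_{i}\}$ produces $x'\in\overline{X}_{E:K}$ together with $\|x'\|_{\overline{X}_{E:K}}\leq C(\sum_{i}\|x_{i}\|_{\overline{X}_{E:K}}^{q})^{1/q}$. Finally, since $|g|\leq\tilde{g}\leq\gamma^{3}K(\cdot ,x';\overline{X})\in E$, the lattice property of $E$ forces $g\in E$ with $\|g\|_{E}\leq\gamma^{3}\|x'\|_{\overline{X}_{E:K}}$; combining the last two estimates with $\|x_{i}\|_{\overline{X}_{E:K}}\preceq\|g_{i}\|_{E}$ yields the desired bound $\|g\|_{E}\leq C'(\sum_{i}\|g_{i}\|_{E}^{q})^{1/q}$. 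I expect the only real bookkeeping to be the tracking of the universal constant $\gamma$ coming from $Conv$-abundance together with the rescaling that brings the lifted inequality into the exact constant-$1$ form demanded by Definition \ref{def1}. The one genuinely external ingredient is the $K$-monotonicity of $E$ with respect to $\overline{L^{\infty}}$ (equivalently $\|g\|_{E}\cong\|K(\cdot ,g;\overline{L^{\infty}})\|_{E}$), which is exactly what lets us pass between the lattice norms $\|g_{i}\|_{E}$ and the norms $\|\tilde{g}_{i}\|_{E}$ of the least concave majorants; this is the main place where hypotheses beyond the mere intermediacy of $E$ enter.
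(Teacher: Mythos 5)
Your proposal is correct and follows essentially the same route as the paper: part (i) transfers the defining inequality through the identity $K(t,f;\overline{L^{\infty}})=f(t)$ for $f\in Conv$, and part (ii) uses $Conv$-abundance to lift $g$ and the $g_i$ to elements of $\Sigma(\overline{X})$ and then pulls the conclusion back via the lattice property of $E$. Your version is in fact slightly more explicit than the paper's about the rescaling needed to reduce the $\preceq$-inequality to the exact form of Definition \ref{def1} and about why $\Vert K(\cdot,g_i;\overline{L^{\infty}})\Vert_E\preceq\Vert g_i\Vert_E$, but these are details the paper treats implicitly rather than a different argument.
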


\begin{proof}
(i) Assume that for any $x\in \Sigma \left( \overline{X}\right) $ and a
sequence $\{x_i\}\in l^{q}(\overline{X}_{E:K})$ we have 
\begin{equation*}
K\left(t,x;\overline{X}\right) \leq \left( \sum_{i=1}^{\infty
}K\left(t,x_{i};\overline{X}\right) ^{p}\right)^{1/p},\;\;t>0.
\end{equation*}
Then, since any function of the form $t\mapsto K\left(t,y;\overline{X}%
\right) $ belongs to $Conv$ (see Section \ref{Prel1} or \cite[Proposition
3.1.17]{BK91}), we have 
\begin{equation*}
K\left(t,K\left( \cdot ,x;\overline{X}\right);\overline{L_\infty}\right)\le
\left(\sum_{i=1}^{\infty }K\left(t,K\left( \cdot ,x_i:\overline{X}\right);\overline{L_\infty}\right)^p\right)^{1/p},\;\;t>0,
\end{equation*}
and 
\begin{equation*}
\left( \sum_{i=1}^{\infty }\left\Vert K\left( \cdot ,x_{i};\overline{X}%
\right)\right\Vert _{E}^{q}\right) ^{1/q}=\left( \sum_{i=1}^{\infty
}\left\Vert x_{i}\right\Vert _{\overline{X}_{E:K}}^{q}\right) ^{1/q}<\infty.
\end{equation*}
Consequently, by the assumption, it follows that $K\left( \cdot ,x;\overline{X}\right) \in E$, i.e., $x\in \overline{X}_{E:K}$. Moreover, 
\begin{equation*}
\|x\|_{\overline{X}_{E:K}}\le R_{(p,q)}(E)\left( \sum_{i=1}^{\infty
}\left\Vert x_{i}\right\Vert _{\overline{X}_{E:K}}^{q}\right) ^{1/q}.
\end{equation*}

(ii) Let $f\in \Sigma \left( \overline{L_\infty}\right) $ and $\{f_i\}\in
l^{q}(E)$ satisfy 
\begin{equation*}
K\left(t,f;\overline{L_\infty}\right) \leq \left(\sum_{i=1}^{\infty
}K\left(t,f_{i};\overline{L_\infty}\right) ^{p}\right)^{1/p},\;\;t>0.
\end{equation*}
Since $E\in Int\left( \overline{L_\infty}\right) $, we have 
\begin{equation*}
g_i(t):=K\left(t,f_{i};\overline{L_\infty}\right)\in E\;\;\mbox{and}%
\;\;\{g_i\}\in l^{q}(E).
\end{equation*}
Moreover, taking into account that $\overline{X}$ is $Conv$-abundant, we can
find $x_{i},x\in \Sigma\left(\overline{X}\right) $, $i=1,2,\dots$, such that 
\begin{equation*}
K\left(t,f;\overline{L_\infty}\right)\cong K\left(t,x;\overline{X}\right)\;\;%
\mbox{and}\;\; g_i(t)\cong K\left(t,x_i:\overline{X}\right),\;\;i=1,2,\dots,%
\;t>0,
\end{equation*}
with constants depending only on the couple $\overline{X}$. Thus, from the last inequality it follows 
\begin{equation*}
K\left(t,x;\overline{X}\right) \preceq \left(\sum_{i=1}^{\infty
}K\left(t,x_{i};\overline{X}\right) ^{p}\right)^{1/p},\;\;t>0.
\end{equation*}
Moreover, since $\{g_i\}\in l^{q}(E)$, we have $\{x_i\}\in l^{q}(\overline{X}%
_{E:K})$. Hence, by the assumption, $x\in \overline{X}_{E:K}$ and so $f\in
E. $ One can easily check also that the required estimate for the norm $\|f\|_E$ holds.
\end{proof}

The next result shows that the $K\left( p,q\right)$-monotonicity of a quasi-Banach lattice is closely connected with its convexity properties.

\begin{proposition}
\label{pr6.8} Let $0<q\le p\le 1$. If $\overline{X}=(X_0,X_1)$ is a $p$%
-convex quasi-Banach lattice couple and $X$ is an intermediate quasi-Banach
lattice with respect to $\overline{X}$, then the following conditions are
equivalent:

(a) $X$ is finitely $K\left( p,q\right)$-monotone;

(b) $X$ is $K$-monotone with respect to $\overline{X}$ and $\left(
p,q\right) $-convex;

(c) $X$ is $K\left( p,q\right)$-monotone.
\end{proposition}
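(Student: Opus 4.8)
The plan is to prove the cycle of implications $(a)\Rightarrow(b)\Rightarrow(c)\Rightarrow(a)$. The implication $(c)\Rightarrow(a)$ is immediate: a finite sequence $\{x_i\}_{i=1}^n\subset X$ is a special case of a sequence in $l^{q}(X)$ (complete it by zeros), so the defining property of $K(p,q)$-monotonicity, restricted to finite sequences, is precisely finite $K(p,q)$-monotonicity, with $R_{(p,q),f}(X)\le R_{(p,q)}(X)$.

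For $(a)\Rightarrow(b)$ I would argue as follows. The $K$-monotonicity of $X$ is handed to us directly by Proposition \ref{pr6.1}. To get $(p,q)$-convexity, take arbitrary $x_1,\dots,x_n\in X$ and set $w:=\bigl(\sum_{k=1}^n|x_k|^p\bigr)^{1/p}\in\Sigma(\overline{X})$. Since the couple $\overline{X}$ is $p$-convex, \cite[Proposition~3.2]{AsNi21} gives $K(t,w;\overline{X})\le M^{(p)}(\Sigma(\overline{X}))\bigl(\sum_k K(t,x_k;\overline{X})^p\bigr)^{1/p}$; because the $K$-functional is positively homogeneous, absorbing the constant $M^{(p)}$ by replacing each $x_k$ with $M^{(p)}x_k$ puts this inequality in the exact form required by Definition \ref{def1}. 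Finite $K(p,q)$-monotonicity then yields $w\in X$ with $\|w\|_X\le R_{(p,q),f}(X)\,M^{(p)}(\Sigma(\overline{X}))\bigl(\sum_k\|x_k\|_X^q\bigr)^{1/q}$, which is $(p,q)$-convexity. Here I use freely that for a lattice couple $K(t,x_k;\overline{X})=K(t,|x_k|;\overline{X})$ and that $\|x_k\|_X=\||x_k|\|_X$.

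The core of the argument is $(b)\Rightarrow(c)$, which I expect to contain the main obstacle. Assume $X$ is $K$-monotone, hence \emph{uniformly} $K$-monotone with some constant $C_{KM}$ (since $X$ is an intermediate lattice over a lattice couple, by the remark in Subsection \ref{Prel2}), and $(p,q)$-convex. Given $\{x_i\}\in l^{q}(X)$ and $x\in\Sigma(\overline{X})$ satisfying \eqref{equa1}, set $\varphi_i:=K(\cdot,x_i;\overline{X})\in Conv$; as $q\le p$ and $\|x_i\|_{\Sigma(\overline{X})}\le M_2\|x_i\|_X$, one has $\sum_i\varphi_i(1)^p<\infty$. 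Applying the $p$-$K$-divisibility Theorem \ref{K-divisibility} produces $0\le z_i\in\Sigma(\overline{X})$ with $K(t,z_i;\overline{X})\le\gamma''\,K(t,x_i;\overline{X})$, with the representation $|x|=\sup_n\bigl(\sum_{i=1}^n z_i^p\bigr)^{1/p}$ and the tail condition of \eqref{eq2} (together with its convexified form in Remark \ref{special1}). Uniform $K$-monotonicity applied to $z_i$ against $\gamma''x_i\in X$ gives $z_i\in X$ with $\|z_i\|_X\le C_{KM}\gamma''\|x_i\|_X$, so $\{z_i\}\in l^{q}(X)$.

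It remains to pass from the bounded increasing partial sums $w_n:=\bigl(\sum_{i=1}^n z_i^p\bigr)^{1/p}$ to the conclusion $x\in X$ with the right norm bound; boundedness of $\{w_n\}$ in $X$ alone does not force convergence, so the hard part is to extract convergence. The device I would use is the $1/p$-convexification $X^{(1/p)}$, where $u\oplus v=(u^p+v^p)^{1/p}$ and $|||u|||_{X^{(1/p)}}=\|u\|_X^{\,p}$; under this passage the $(p,q)$-convexity of $X$ becomes exactly a $(1,q/p)$-convexity (an upper $q/p$-estimate) of $X^{(1/p)}$, with $q/p\le 1$. Writing $w_m=w_n\oplus\bigl(\sum_{i=n+1}^m z_i^p\bigr)^{1/p}$, so that the $X^{(1/p)}$-difference $w_m\ominus w_n$ (the inverse of $\oplus$) is the nonnegative tail, this upper estimate gives $|||w_m\ominus w_n|||_{X^{(1/p)}}\le \bigl(M^{(p,q)}(X)\bigr)^p\bigl(\sum_{i=n+1}^m\|z_i\|_X^q\bigr)^{p/q}\to0$ as $n,m\to\infty$, since $\{z_i\}\in l^{q}(X)$. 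Thus $\{w_n\}$ is Cauchy in the complete quasi-Banach lattice $X^{(1/p)}$ and converges there; comparing with the convergence $w_n\to|x|$ in $\Sigma(\overline{X})^{(1/p)}$ from Remark \ref{special1} identifies the limit as $|x|$, whence $x\in X$. Finally, continuity of the quasi-norm gives $\|x\|_X=\lim_n\|w_n\|_X\le C_{KM}\gamma''M^{(p,q)}(X)\bigl(\sum_i\|x_i\|_X^q\bigr)^{1/q}$, so $R_{(p,q)}(X)\le C_{KM}\gamma''M^{(p,q)}(X)$, and the cycle is complete.
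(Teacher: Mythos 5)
Your proposal is correct and follows essentially the same route as the paper: Proposition \ref{pr6.1} plus the $p$-convexity of $\Sigma(\overline{X})$ for $(a)\Rightarrow(b)$, and for $(b)\Rightarrow(c)$ the combination of uniform $K$-monotonicity, the $p$-$K$-divisibility Theorem \ref{K-divisibility} with Remark \ref{special1}, and the $(p,q)$-convexity used to make the partial sums $\bigl(\sum_{i=1}^n z_i^p\bigr)^{1/p}$ Cauchy in $X^{(1/p)}$. The only cosmetic differences are your rescaling trick to absorb the constant $M^{(p)}(\Sigma(\overline{X}))$ into the $x_k$'s (the paper simply carries the constant along) and your statement $\|x\|_X=\lim_n\|w_n\|_X$, which for a quasi-norm should be weakened to $\|x\|_X\preceq\sup_n\|w_n\|_X$ — exactly what the bound requires.
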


\begin{proof}
$(a)\Longrightarrow (b)$. By Proposition \ref{pr6.1}, we need to prove only
that $X$ is $\left( p,q\right) $-convex.

Let $x_{i}\in X$, $i=1,\dots,n$. Since the couple $\overline{X}$ is $p$-convex, then $\Sigma\left(\overline{X}\right)$ (with the quasi-norm $%
x\mapsto K\left(t,x;\overline{X}\right)$) is a $p$-convex quasi-Banach
lattice \cite[Proposition~3.2]{AsNi21}. Consequently, for some $C>0$ 
\begin{equation*}
K\left(t,\left( \sum_{i=1}^{n}\left\vert x_{i}\right\vert ^{p}\right) ^{1/p}:%
\overline{X}\right) \leq C\left( \sum_{i=1}^{n}K\left(t,x_{i};\overline{X}%
\right) ^{p}\right) ^{1/p},\;\;t>0.
\end{equation*}%
Hence, by the assumption, 
\begin{equation*}
\left\Vert\left( \sum_{i=1}^{n}\left\vert x_{i}\right\vert ^{p}\right)
^{1/p}\right\Vert_X \leq C\cdot R_{(p,q),f}(X)\left(\sum_{i=1}^{n}\left\Vert
x_i\right\Vert _{X}^{q}\right) ^{1/q},
\end{equation*}
i.e., $X$ is $\left( p,q\right) $-convex.

$(b)\Longrightarrow (c)$. Observe first that, by \cite[Theorem 6.1]{CwNi03}
(see also Section \ref{Prel2}), we can assume that $X$ is a uniform $K$%
-monotone space with respect to the couple $\overline{X}$. Denote by $C_{X}$
the $K$-monotonicity constant of $X.$

Let $x\in \Sigma\left(\overline{X}\right)$ and $\{y_{i}\}_{i=1}^\infty\in
l^q(X)$ satisfy 
\begin{equation*}
K\left( t,x;\overline{X}\right) \leq \left( \sum_{i=1}^{\infty}K\left(
t,y_{i};\overline{X}\right) ^{p}\right) ^{1/p},\;\;t>0.
\end{equation*}
Observe that 
$$
\sum_{i=1}^{\infty}K\left(1,y_{i};\overline{X}\right) ^{p}\le M_2^p\sum_{i=1}^{n}\left\Vert y_i\right\Vert _{X}^{p}\le M_2^p\left(\sum_{i=1}^{n}\left\Vert y_i\right\Vert _{X}^{q}\right)^{p/q}<\infty,$$
where $M_2$ is the norm of the inclusion $X\subset \Sigma\left(\overline{X}\right)$. Therefore, applying Theorem \ref{K-divisibility}, we can select elements $%
x_{i}\in \Sigma \left( \overline{X}\right) $, $x_i\ge 0$, with 
\begin{equation}
K\left( t,x_{i};\overline{X}\right) \leq \gamma ^{\prime\prime }K\left(
t,y_{i};\overline{X}\right) ,\;\;i=1,2,\dots,\;\;t>0,  \label{equat100}
\end{equation}%
and 
\begin{equation*}
\left\vert x\right\vert =\sup_{n=1,2,\dots}\left(
\sum_{i=1}^{n}x_{i}^{p}\right) ^{1/p}\;\;(\;\mbox{in}\;\Sigma \left( 
\overline{X}\right)).
\end{equation*}%
Let us show that $x\in X$.

Since $X$ is a uniform $K$-monotone space with respect to the couple $%
\overline{X}$, from inequality \eqref{equat100} we infer that $x_{i}\in X$
and $\Vert x_{i}\Vert _{X}\leq C_{X}\cdot \gamma ^{\prime\prime }\Vert
y_{i}\Vert _{X}$, $i=1,2,\dots .$ Therefore, the hypothesis of the $(p,q)$%
-convexity of $X$ implies that for positive integers $m<n$ 
\begin{eqnarray*}
\left\|\sum_{i=m}^{n}\oplus x_{i}\right\|_{X^{\left( 1/p\right) }} &=&
\left\|\left( \sum_{i=m}^{n}x_{i}^{p}\right) ^{1/p}\right\|_X \leq
M^{(p,q)}(X)\left(\sum_{i=m}^{n}\left\Vert x_{i}\right\Vert _{X}^{q}\right)
^{1/q} \\
&\leq& C_{X\ }\gamma ^{\prime\prime }M^{(p,q)}(X)\left(
\sum_{i=m}^{\infty}\left\Vert y_{i}\right\Vert _{X}^{q}\right) ^{1/q}.
\end{eqnarray*}%
Consequently, the series $\sum_{i=1}^{\infty}\oplus x_{i}$ converges in $%
X^{\left( 1/p\right) }$ and for all $n=1,2,\dots$ 
\begin{equation*}
\left\|\left( \sum_{i=1}^{n}x_{i}^{p}\right) ^{1/p}\right\|_X \leq C_{X\
}\gamma ^{\prime\prime }M^{(p,q)}(X)\left( \sum_{i=1}^{\infty}\left\Vert
y_{i}\right\Vert _{X}^{q}\right) ^{1/q}.
\end{equation*}
Combining this inequality with Remark \ref{special1}, we conclude that $x\in
X $ and 
\begin{equation*}
\|x\|_X\le C_{X\ }\gamma ^{\prime\prime }M^{(p,q)}(X)\left(
\sum_{i=1}^{\infty}\left\Vert y_{i}\right\Vert _{X}^{q}\right) ^{1/q}.
\end{equation*}

Since implication $(c)\Longrightarrow (a)$ is obvious, the proof is completed.
\end{proof}

%\begin{remark}
%\textbf{Perhaps connect next result also back to 6.4. Note that if }$E$ \textbf{\ is mutually closed then so is }$\overline{X}_{E:K}$\textbf{\ See \cite[Theorem 3.20]{Nil82} and likely also \cite{BK91}. For instance}
%$\left( iii\right) :$\textbf{\ If }$E$\textbf{\ is }$\left( p,q\right) $ \textbf{\ convex and mutually closed then }$\overline{X}_{E:K}$\textbf{\ is } $K\left( p,q\right) $\textbf{-monotone.}
%\end{remark}
%
%\begin{remark}
%\textbf{SA-21-12-23 I looked at \cite{BK91}, but there the necessary result for mutual closedness is proved only for Banach couples. I'm not sure if \cite{Nil82} contains this result for quasi-Banach couples, because I didn't find this paper. Please check this and if it is so add part (iii).}
%\end{remark}

\begin{proposition}
\label{pr6.8a} Suppose $0<q\leq p\leq 1$, $E\in Int\left( \overline{%
L_{\infty }}\right) $ and $\overline{X}=(X_{0},X_{1})$ is a quasi-Banach couple.

(i) If $E$ is finitely $K\left( p,q\right) $-monotone, then $E$ is $\left(
p,q\right) $-convex.

(ii) If $E$ is $\left( p,q\right) $-convex, then the space $\overline{X}%
_{E:K}$ is finitely $K\left( p,q\right) $-monotone.
\end{proposition}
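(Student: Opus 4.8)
The plan is to derive both parts from the $p$-convexity of the couple $\overline{L_{\infty}}=(L^{\infty},L^{\infty}(1/t))$: for the given $p\in(0,1]$ both $L^{\infty}$ and $L^{\infty}(1/t)$ are $p$-convex with constant $1$, so $\overline{L_{\infty}}$ is a $p$-convex quasi-Banach lattice couple. By \cite[Proposition~3.2]{AsNi21} (exactly as used in the proof of Proposition \ref{pr6.8}), this yields the pointwise bound
\begin{equation*}
K\Big(t,\Big(\sum_{k=1}^{n}|f_{k}|^{p}\Big)^{1/p};\overline{L_{\infty}}\Big)\leq C\Big(\sum_{k=1}^{n}K(t,f_{k};\overline{L_{\infty}})^{p}\Big)^{1/p},\quad t>0,
\end{equation*}
for every finite family $f_{1},\dots,f_{n}\in\Sigma(\overline{L_{\infty}})$, with $C=M^{(p)}(\Sigma(\overline{L_{\infty}}))<\infty$. (One may also argue directly: $K(\cdot,f;\overline{L_{\infty}})$ is the least concave majorant of $|f|$, and since $y\mapsto(\sum_{k}y_{k}^{p})^{1/p}$ is concave and nondecreasing on $[0,\infty)^{n}$ for $0<p\leq1$, the right-hand side is a concave majorant of $(\sum_{k}|f_{k}|^{p})^{1/p}$, which gives the bound with $C=1$.)

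For (i), fix $f_{1},\dots,f_{n}\in E$ and put $f:=(\sum_{k=1}^{n}|f_{k}|^{p})^{1/p}$. Each $K(\cdot,f_{k};\overline{L_{\infty}})\in E\subset\Sigma(\overline{L_{\infty}})$ is finite pointwise, so the displayed inequality shows first that $f\in\Sigma(\overline{L_{\infty}})$ and second that $f$ together with $\{f_{k}\}_{k=1}^{n}$ satisfies the defining inequality \eqref{equa1} of finite $K(p,q)$-monotonicity. Hence $f\in E$ and $\|f\|_{E}\leq R_{(p,q),f}(E)(\sum_{k=1}^{n}\|f_{k}\|_{E}^{q})^{1/q}$, which after absorbing the constant $C$ is precisely the $(p,q)$-convexity of $E$, with $M^{(p,q)}(E)\leq C\cdot R_{(p,q),f}(E)$.

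For (ii), take $\{x_{i}\}_{i=1}^{n}\subset\overline{X}_{E:K}$ and $x\in\Sigma(\overline{X})$ with $K(t,x;\overline{X})\leq(\sum_{i=1}^{n}K(t,x_{i};\overline{X})^{p})^{1/p}$ for all $t>0$. Writing $g:=K(\cdot,x;\overline{X})$ and $g_{i}:=K(\cdot,x_{i};\overline{X})$, we have $g_{i}\in E$ and $\|g_{i}\|_{E}=\|x_{i}\|_{\overline{X}_{E:K}}$. The $(p,q)$-convexity of $E$ gives $(\sum_{i=1}^{n}g_{i}^{p})^{1/p}\in E$ with norm at most $M^{(p,q)}(E)(\sum_{i=1}^{n}\|x_{i}\|_{\overline{X}_{E:K}}^{q})^{1/q}$. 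Since $0\leq g\leq(\sum_{i=1}^{n}g_{i}^{p})^{1/p}$ pointwise and $E$ is a function lattice, we conclude $g\in E$, i.e.\ $x\in\overline{X}_{E:K}$, and $\|x\|_{\overline{X}_{E:K}}=\|g\|_{E}\leq\|(\sum_{i=1}^{n}g_{i}^{p})^{1/p}\|_{E}$. This is the finite $K(p,q)$-monotonicity of $\overline{X}_{E:K}$, with $R_{(p,q),f}(\overline{X}_{E:K})\leq M^{(p,q)}(E)$.

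Neither direction is deep. The only steps requiring attention are the pointwise $K$-functional inequality for $\overline{L_{\infty}}$ in (i) (where one must check that the $p$-convexity constant of $\Sigma(\overline{L_{\infty}})$ is finite, which it is), and in (ii) the transfer of membership and the norm estimate from $(\sum_{i}g_{i}^{p})^{1/p}$ to $g$, which is immediate from the lattice majorization $0\leq g\leq(\sum_{i}g_{i}^{p})^{1/p}$.
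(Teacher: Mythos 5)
Your proof is correct and follows essentially the same route as the paper: part (i) is exactly the paper's appeal to the implication (a)$\Rightarrow$(b) of Proposition \ref{pr6.8} applied to the $p$-convex couple $\overline{L_{\infty}}$ (you simply unpack the pointwise $K$-functional bound from \cite[Proposition~3.2]{AsNi21} and absorb the constant by homogeneity), and part (ii) matches the paper's argument verbatim, using the lattice majorization $0\leq g\leq\bigl(\sum_{i}g_{i}^{p}\bigr)^{1/p}$ and the $(p,q)$-convexity of $E$. Your parenthetical direct argument via the least-concave-majorant description of $K(\cdot,f;\overline{L_{\infty}})$ and the concavity of $y\mapsto\bigl(\sum_{k}y_{k}^{p}\bigr)^{1/p}$ on the positive orthant is a valid refinement that yields constant $C=1$ in (i).
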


\begin{proof}
(i) Since $0<p\leq 1$, then the Banach couple $\overline{L}_{\infty }$ is $p$-convex. Therefore, the desired result follows from Proposition \ref{pr6.8}.

(ii) Assume that $x_{i}\in \overline{X}_{E:K}$, $i=1,\dots ,n$ and put $%
g(t):=\left( \sum_{i=1}^{n}K\left( t,x_{i};\overline{X}\right) ^{p}\right)
^{1/p}.$ If $K\left( t,x;\overline{X}\right) \leq g(t)$, $t>0$, we have 
\begin{equation*}
\left\Vert x\right\Vert _{\overline{X}_{E:K}}=\left\Vert K\left( \cdot ,x:%
\overline{X}\right) \right\Vert _{E}\leq \left\Vert g\right\Vert _{E}.
\end{equation*}%
Moreover, since $E$ is $\left( p,q\right) $-convex, we infer 
\begin{equation*}
\left\Vert g\right\Vert _{E}\leq M^{\left( p,q\right) }\left( E\right)
\left( \sum_{i=1}^{n}\left\Vert K\left( \cdot ,x_{i};\overline{X}\right)
\right\Vert _{E}^{q}\right) ^{1/q}=M^{\left( p,q\right) }\left( E\right)
\left( \sum_{i=1}^{n}\left\Vert x_{i}\right\Vert _{\overline{X}%
_{E:K}}^{q}\right) ^{1/q}.
\end{equation*}%
Combining these inequalities, we conclude that 
\begin{equation*}
\left\Vert x\right\Vert _{\overline{X}_{E:K}}\leq M^{\left( p,q\right)
}\left( E\right) \left( \sum_{i=1}^{n}\left\Vert x_{i}\right\Vert _{%
\overline{X}_{E:K}}^{q}\right) ^{1/q},
\end{equation*}%
and the proof completed.
\end{proof}

\section{A description of $K$-monotone lattices with respect to $L$-convex
quasi-Banach lattice couples by the real method.}

\label{description of K-monotone lattices}

One of the milestones of the interpolation theory developed by Brudnyi and
Kruglyak is the fundamental fact that every $K$-monotone Banach space with
respect to a Banach couple is described by the real $K$-method (see \cite[%
Theorem~4.1.11]{BK91}). In this section, we establish some results of such a
sort in the quasi-Banach setting provided if a lattice couple $\overline{X}$
possesses some non-trivial convexity. In addition, exploiting the above
notion of $K\left( p,q\right) $-monotonicity, we get a new description of $K$%
-monotone spaces with respect both Banach and $L$-convex quasi-Banach
couples.

\vskip0.2cm

Let $\overline{X}=(X_{0},X_{1})$ be a quasi-Banach couple and let $X$ be an
arbitrary intermediate quasi-Banach space with respect to $\overline{X}.$
Suppose $0<q\le p\le 1$. We begin with a construction of a suitable
parameter $\widehat{X}_{p,q}$ such that (under certain additional
conditions) $X=\overline{X}_{\widehat{X}_{p,q}:K}$. Our approach here is an
adoption of the arguments used in \cite[ Theorem~4.1.11]{BK91} (see also 
\cite[Theorem 4.1 and Corollary 4.3]{Nil83}).

\vskip0.2cm

We consider the linear space $\widehat{X}_{p,q}$ consisting of all
measurable functions $f$ on $[0,\infty )$, for which there exists a sequence 
$\left\{ x_{i}\right\} _{i=1}^{\infty }\in l^{q}\left( X\right) $ such that 
\begin{equation*}
\left\vert f(t)\right\vert \leq \left( \sum_{i=1}^{\infty }K\left( t,x_{i};%
\overline{X}\right) ^{p}\right) ^{1/p},\;\;t>0.
\end{equation*}%
Also, define the functional $f\mapsto \left\Vert f\right\Vert _{\widehat{X}%
_{p,q}}$ on $\widehat{X}_{p,q}$ by 
\begin{equation}
\left\Vert f\right\Vert _{\widehat{X}_{p,q}}:=\inf \left\{ \left(
\sum_{i=1}^{\infty }\left\Vert x_{i}\right\Vert _{X}^{q}\right)
^{1/q}\right\} ,  \label{p-norm}
\end{equation}%
where the infimum is taken over all sequences $\left\{ x_{i}\right\}
_{i=1}^{\infty }\in l^{q}\left( X\right) $ satisfying the preceding
inequality.

One can easily check that $\left\Vert f\right\Vert_{\widehat{X}_{p,q}}=0$ if
and only if $f=0$ and $\left\Vert \alpha f\right\Vert_{\widehat{X}%
_{p,q}}=|\alpha|\left\Vert f\right\Vert_{\widehat{X}_{p,q}}$ for each $%
\alpha\in\mathbb{R}$. In the next lemma we prove further properties of the
functional $f\mapsto \left\Vert f\right\Vert_{\widehat{X}_{p,q}}$.

\begin{lemma}
\label{properties of} Under the above conditions we have the following:

(i) $\widehat{X}_{p,q}$ is a $K\left( p,q\right)$-monotone quasi-Banach
lattice. Hence, $\widehat{X}_{p,q}$ is $(p,q)$-convex, and, in particular,
if $p=q$, it is $L$-convex.

(ii) $\widehat{X}_{p,q}$ is an exact interpolation space with respect to the
couple $\overline{L^{\infty }}.$
\end{lemma}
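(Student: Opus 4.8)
The plan is to verify directly, from the definition of the functional $\|\cdot\|_{\widehat{X}_{p,q}}$, the three components hidden in the statement: that $\widehat{X}_{p,q}$ is a quasi-Banach lattice, that it is $K(p,q)$-monotone with respect to $\overline{L^{\infty}}$, and that it is an exact interpolation space for $\overline{L^{\infty}}$. I would organize part (i) around these in turn, then invoke the already-proved machinery for the convexity consequences.

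First I would establish the lattice structure. The lattice order is the pointwise a.e.\ order on measurable functions on $(0,\infty)$; the monotonicity of the norm (if $|f|\le|g|$ then $\|f\|_{\widehat{X}_{p,q}}\le\|g\|_{\widehat{X}_{p,q}}$) is immediate because any admissible dominating sequence $\{x_i\}$ for $g$ also dominates $f$. For the quasi-triangle inequality I would take $f,g\in\widehat{X}_{p,q}$ with near-optimal dominating sequences $\{x_i\}$ and $\{y_j\}$, concatenate them into a single sequence, and use that, since $K(t,\cdot;\overline{X})$-type majorants add in $\ell^p$, the concatenated sequence dominates a constant multiple of $|f|+|g|$ in the relevant $(\sum(\cdots)^p)^{1/p}$ sense; the constant that appears comes from the elementary inequality $(a^p+b^p)^{1/p}\le 2^{1/p-1}(a+b)$ or its reverse, giving a finite quasi-norm constant. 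Homogeneity and nondegeneracy are noted in the text as already checked, so I only need completeness. Here the natural route is to show $\widehat{X}_{p,q}$ is $q$-normable directly from the definition (concatenating sequences shows $\|\sum f_k\|^q\lesssim\sum\|f_k\|^q$), and then use the standard fact that a $q$-normable quasi-normed space in which absolutely ($q$-summably) convergent series converge is complete; the series completeness follows by extracting dominating sequences for each $f_k$ whose $\ell^q(X)$-norms sum, and concatenating them into one sequence in $l^q(X)$ that dominates the partial sums.

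For the $K(p,q)$-monotonicity I would argue as follows. Suppose $f\in\Sigma(\overline{L^{\infty}})$ and $\{f_i\}\in l^q(\widehat{X}_{p,q})$ satisfy $K(t,f;\overline{L^{\infty}})\le(\sum_i K(t,f_i;\overline{L^{\infty}})^p)^{1/p}$. Since $K(t,f_i;\overline{L^{\infty}})$ is the least concave majorant of $|f_i|$ and in particular dominates $|f_i|$, and since each $f_i\in\widehat{X}_{p,q}$ admits a dominating sequence $\{x^{(i)}_j\}_j\subset X$ with $(\sum_j\|x^{(i)}_j\|_X^q)^{1/q}$ close to $\|f_i\|_{\widehat{X}_{p,q}}$, I would substitute these in and use that the $K$-functional for $\overline{L^{\infty}}$ of each $x^{(i)}_j$-majorant is controlled by $K(\cdot,x^{(i)}_j;\overline{X})$ up to the Calderón couple structure of $\overline{L^{\infty}}$ — more directly, because $K(t,f_i;\overline{L^{\infty}})\le(\sum_j K(t,x^{(i)}_j;\overline{X})^p)^{1/p}$ by definition of membership. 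Doubly indexing and collapsing $\{x^{(i)}_j\}_{i,j}$ into one sequence then exhibits $f$ with $|f(t)|\le K(t,f;\overline{L^{\infty}})\le(\sum_{i,j}K(t,x^{(i)}_j;\overline{X})^p)^{1/p}$, so $f\in\widehat{X}_{p,q}$ with $\|f\|_{\widehat{X}_{p,q}}^q\le\sum_{i,j}\|x^{(i)}_j\|_X^q\le\sum_i\|f_i\|_{\widehat{X}_{p,q}}^q+\varepsilon$, yielding the monotonicity constant $1$. Once $K(p,q)$-monotonicity is in hand, the $(p,q)$-convexity of $\widehat{X}_{p,q}$ follows from Proposition~\ref{pr6.8a}(i) applied to $E=\widehat{X}_{p,q}\in Int(\overline{L^{\infty}})$ (which is part (ii) below), and when $p=q$ the $(p,p)$-convexity is $p$-convexity, hence $L$-convexity by Kalton's theorem \cite[Theorem~2.2]{Kal84}. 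I expect the main obstacle to be the bookkeeping in this double-indexing step — keeping the $\ell^p$-over-functionals and $\ell^q$-over-norms structure straight while passing from $f_i$ to their dominating sequences, and verifying that the least-concave-majorant $K$-functional of $\overline{L^{\infty}}$ does not cost more than a harmless constant (ideally none, for the exact constant $1$).

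Finally, for part (ii) I would show $\widehat{X}_{p,q}$ is exactly interpolation for $\overline{L^{\infty}}$: given $T\in\mathfrak{L}(\overline{L^{\infty}})$ with $\|T\|_{\mathfrak{L}(\overline{L^{\infty}})}\le 1$ and $f\in\widehat{X}_{p,q}$, pick a near-optimal dominating sequence $\{x_i\}\subset X$ for $f$. The key point is that $T$ acts on the function side only, while the $x_i$ live in $X$; so I would instead use $Conv$-abundance-type reasoning, representing $|f|\le(\sum_i K(\cdot,x_i;\overline{X})^p)^{1/p}$ and noting that $K(\cdot,Tf;\overline{L^{\infty}})\le K(\cdot,f;\overline{L^{\infty}})$ since $\|T\|\le 1$ on the couple $\overline{L^{\infty}}$ (the $K$-functional is contracted by couple-bounded operators). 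Because the pointwise majorant defining membership is preserved under replacing $f$ by any $g$ with $K(\cdot,g;\overline{L^{\infty}})\le K(\cdot,f;\overline{L^{\infty}})$ — which is exactly the $K(p,1)$-monotonicity already established (a single-term instance of $K(p,q)$-monotonicity) — we conclude $Tf\in\widehat{X}_{p,q}$ with $\|Tf\|_{\widehat{X}_{p,q}}\le\|f\|_{\widehat{X}_{p,q}}$, i.e.\ exactness. Thus part (ii) reduces to the monotonicity already proved, and the two parts of the lemma reinforce each other cleanly.
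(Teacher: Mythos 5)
Your overall strategy coincides with the paper's: prove the $K(p,q)$-monotonicity of $\widehat{X}_{p,q}$ directly by extracting near-optimal dominating sequences for each $f_i$, double-indexing and collapsing; deduce $(p,q)$-convexity from Proposition \ref{pr6.8a}(i); establish completeness via $q$-normability plus convergence of $q$-absolutely convergent series; and obtain exactness in (ii) from constant-$1$ $K$-monotonicity with respect to $\overline{L^{\infty}}$. Two points, however, are left as genuine gaps.

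First, in part (ii) you never verify that $\widehat{X}_{p,q}$ is an \emph{intermediate} space for $\overline{L^{\infty}}$, i.e.\ the continuous inclusions $\Delta\left(\overline{L^{\infty}}\right)\subset\widehat{X}_{p,q}\subset\Sigma\left(\overline{L^{\infty}}\right)$. This is not automatic and is a necessary ingredient of the claim that $\widehat{X}_{p,q}$ is an (exact) interpolation space; the paper devotes half of its proof of (ii) to it. The first inclusion requires exhibiting, for $f$ with $|f(t)|\le\min(1,t)$, an element of $X$ whose $K$-functional dominates $\min(1,t)$: one takes $x_{0}\in\Delta\left(\overline{X}\right)$ with $\Vert x_{0}\Vert_{\Delta\left(\overline{X}\right)}=1$ and uses $K(t,x_{0};\overline{X})\ge\min(1,t)K(1,x_{0};\overline{X})$. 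The second uses $\min(1,1/t)K(t,x_{i};\overline{X})\le\Vert x_{i}\Vert_{\Sigma\left(\overline{X}\right)}\le M_{2}\Vert x_{i}\Vert_{X}$. Without these, your reduction of (ii) to the monotonicity of (i) covers only the operator estimate, not membership of $\widehat{X}_{p,q}$ in $I\left(\overline{L^{\infty}}\right)$. This also matters for your appeal to Proposition \ref{pr6.8a}(i), whose hypothesis is $E\in Int\left(\overline{L_{\infty}}\right)$; you flag the circularity but do not discharge it, whereas in the paper the intermediacy and monotonicity arguments are carried out independently of the convexity conclusion.

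Second, the step you call the main obstacle --- passing from the pointwise bound $|f_{i}|\le\bigl(\sum_{j}K(\cdot,x_{j}^{(i)};\overline{X})^{p}\bigr)^{1/p}$ to the same bound for $K(\cdot,f_{i};\overline{L^{\infty}})$ with no loss of constant --- genuinely needs an argument, and you do not supply it. The point is that for $0<p\le 1$ the function $\bigl(\sum_{j}\phi_{j}^{p}\bigr)^{1/p}$ is itself concave, nondecreasing and vanishes at $0$ whenever the $\phi_{j}$ have these properties (a consequence of the reverse Minkowski inequality in $\ell^{p}$ for $p\le 1$); hence it dominates the least concave majorant of $|f_{i}|$, which is precisely $K(\cdot,f_{i};\overline{L^{\infty}})$. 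This is what yields the exact constant $1$, and it is stated and used explicitly in the paper's proof. With these two repairs your argument matches the paper's.
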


\begin{proof}
(i) First, we prove that $\widehat{X}_{p,q}$ is $K\left( p,q\right)$%
-monotone (with respect to $\overline{L_\infty}$). Let $f\in \Sigma\left(%
\overline{L_\infty}\right)$ and 
\begin{equation}  \label{special0}
K\left(t,f;\overline{L_\infty}\right)\leq \left( \sum_{i=1}^{\infty
}K\left(t,f_{i};\overline{L_\infty}\right) ^{p}\right) ^{1/p},\;\;t>0,
\end{equation}%
for some sequence $\left\{ f_{i}\right\} _{i=1}^{\infty }\in l^{q}\left( 
\widehat{X}_{p,q}\right) $. Then, by the definition of $\widehat{X}_{p,q}$,
for every $\varepsilon>0$ and for each $i=1,2,\dots$ there exists a sequence $%
\left\{ x_{i}^{k}\right\} _{k=1}^{\infty }\in l^{q}\left( X\right) $ such
that 
\begin{equation}  \label{special5}
\left\vert f_{i}(t)\right\vert \leq \left( \sum_{i=1}^{\infty }K\left(
t,x_{i}^{k};\overline{X}\right) ^{p}\right) ^{1/p},\;\;t>0,
\end{equation}
and 
\begin{equation}  \label{special2}
\|f_i\|_{\widehat{X}_{p,q}}^q\ge \sum_{k=1}^{\infty }\Vert x_{i}^{k}\Vert
_{X}^{q}-2^{-i-1}\varepsilon.
\end{equation}

Suppose that $0<p\leq 1$ and functions $\phi_i(t)\in Conv$, $i=1,2,\dots$, satisfy the condition $\sum_{i=1}^{\infty }\phi_i(1)^{p}<\infty$. Then, by the reverse triangle inequality for the $l^p$-norm, one can easily check that the function $\phi$ defined by 
\begin{equation*}
\phi(t)=\left( \sum_{i=1}^{\infty }\phi_i(t)^{p}\right) ^{1/p},\;\;t>0,
\end{equation*}%
belongs to the cone $Conv$ as well. Moreover, recall that, for
each $h\in \Sigma (\overline{L^{\infty }})$, the function $t\mapsto K\left( t,h;\overline{L^{\infty }}\right) $ is the least concave majorant of $|h|$ (see \cite{Dm-74} or \cite[Proposition~3.1.17]{BK91}). Taking into account these facts, we conclude that from \eqref{special5} it follows  
\begin{equation*}  
K\left(t,f_i;\overline{L_\infty}\right)\leq \left( \sum_{i=1}^{\infty
}K\left(t,x_{i}^{k};\overline{X}\right) ^{p}\right) ^{1/p},\;\;t>0.
\end{equation*}
Combining this together with \eqref{special0}, we get 
\begin{equation*}
\left\vert f(t)\right\vert \leq K\left(t,f;\overline{L_\infty}\right)\leq %
\Big(\sum_{k=1}^{\infty}\sum_{i=1}^{\infty }K\left( t,x_{i}^{k};\overline{X}%
\right)^{p}\Big)^{1/p},\;\;t>0.
\end{equation*}%
Moreover, from \eqref{special2} it follows that 
\begin{equation*}
\sum_{i=1}^{\infty }\sum_{k=1}^{\infty }\Vert x_{i}^{k}\Vert _{X}^{q}\le
\sum_{i=1}^{\infty }\|f_i\|_{\widehat{X}_{p,q}}^q+\varepsilon<\infty,
\end{equation*}
which implies that $\left\{ x_{i}^{k}\right\} _{i,k=1}^{\infty }\in
l^{q}\left( X\right) $. Therefore, 
\begin{equation*}
\|f\|_{\widehat{X}_{p,q}}\leq \Big(\sum_{k=1}^{\infty }\sum_{i=1}^{\infty
}\Vert x_{i}^{k}\Vert _{X}^{q}\Big)^{1/q}\le \Big(\sum_{i=1}^{\infty
}\|f_i\|_{\widehat{X}_{p,q}}^q+\varepsilon\Big)^{1/q}.
\end{equation*}%
Since $\varepsilon>0$ is arbitrary, this implies that $\widehat{X}_{p,q}$ is $%
K\left( p,q\right)$-monotone.

Next, from Proposition \ref{pr6.8a}(i) it follows that $\widehat{X}_{p,q}$
is $(p,q)$-convex, and if $p=q$, it is $L$-convex. In addition, since $%
0<p\leq 1$, we infer that the functional $f\mapsto \left\Vert f\right\Vert _{%
\widehat{X}_{p,q}}^{q}$ satisfies the triangle inequality
\begin{equation}  \label{p-q convexity}
\Big\|\sum_{k=1}^{\infty }f_k\Big\|_{\widehat{X}_{p,q}}\le \Big(\sum_{k=1}^{\infty }\|f_k\|_{\widehat{X}_{p,q}}^q\Big)^{1/q},
\end{equation}
and hence $\widehat{X}_{p,q}$ is a $q$-normable function space on $[0,\infty )$.
Consequently, to prove the completeness of $\widehat{X}_{p,q}$ it suffices
to prove that the condition 
\begin{equation}  \label{eq:compl}
\sum_{k=1}^{\infty }\Vert f_{k}\Vert _{\widehat{X}_{p,q}}^{q}<\infty.
\end{equation}%
implies convergence of the series $\sum_{k=1}^{\infty }f_{k}$ in this space.

From \eqref{eq:compl} it follows that for every $k=1,2,\dots$ there is a
sequence $\left\{ x_{i}^{(k)}\right\} _{i=1}^{\infty }\in l^{q}\left(
X\right) $, $k=1,2,\dots$, such that 
\begin{equation*}
\left\vert f_k(t)\right\vert \leq \left( \sum_{i=1}^{\infty }K\left(
t,x_{i}^{(k)};\overline{X}\right) ^{p}\right) ^{1/p},\;\;t>0,
\end{equation*}%
and 
\begin{equation*}
\sum_{k=1}^{\infty }\sum_{i=1}^{\infty }\left\Vert x_{i}^{(k)}\right\Vert
_{X}^{q}<\infty.
\end{equation*}
Moreover, since $\widehat{X}_{p,q}$ is algebraically and continuously
embedded into the space of all measurable functions on $(0,\infty )$ with
the topology of convergence in the Lebesgue measure on sets of finite
measure, combining \eqref{eq:compl} and \eqref{p-q convexity}, we infer that 
$f(t):=\sum_{k=1}^{\infty }f_{k}(t)$ is an a.e. finite measurable function
on $(0,\infty)$. In addition, from the above estimates and the fact that $%
p\le 1$ it follows for each $n=1,2,\dots$ 
\begin{equation*}
\sum_{k=n}^{\infty}|f_{k}(t)|\le \Big(\sum_{k=n}^{\infty}|f_{k}(t)|^{p}\Big)%
^{1/p}\le \Big(\sum_{k=n}^{\infty}\sum_{i=1}^{\infty }K\left( t,x_{i}^{(k)};%
\overline{X}\right)^{p}\Big)^{1/p},\;\;t>0,
\end{equation*}
whence 
\begin{equation*}
\Big\|\sum_{k=n}^{\infty}|f_{k}|\Big\|_{\widehat{X}_{p,q}}\le
\sum_{k=n}^{\infty }\sum_{i=1}^{\infty }\left\Vert x_{i}^{(k)}\right\Vert
_{X}^{q}.
\end{equation*}
Since the right-hand side of the last inequality tends to zero as $%
n\to\infty $, we conclude that the series $\sum_{k=1}^{\infty }f_{k}$
converges in the space $\widehat{X}_{p,q}$. Therefore, this space is
complete.

(ii) Let us check that $\widehat{X}_{p,q}$ is an intermediate space with
respect to the couple $\overline{L^{\infty }}.$ Show first that $\Delta
\left( \overline{L^{\infty }}\right) \subset \widehat{X}_{p,q}$.

Suppose $f\in \Delta \left( \overline{L^{\infty }}\right) $ and $\Vert
f\Vert _{\Delta \left( \overline{L^{\infty }}\right) }=1$. Then, we have 
\begin{equation*}
|f(t)|\leq \min (1,t),\;\;t>0.
\end{equation*}%
Fix any $x_{0}\in \Delta \left( \overline{X}\right) $ with $\Vert x_{0}\Vert
_{\Delta \left( \overline{X}\right) }=1$. From the definition of the $K$%
-functional if follows that 
\begin{equation*}
K(t,x_{0};\overline{X})\geq \min (1,t)K(1,x_{0};\overline{X}),\;\;t>0.
\end{equation*}%
Hence, denoting $c:=K(1,x_{0};\overline{X})$, we obtain 
\begin{equation*}
|f(t)|\leq \min (1,t)\leq c^{-1}K(t,x_{0};\overline{X}),\;\;t>0.
\end{equation*}%
Since $x_{0}\in X$, by the definition of $\widehat{X}_{p,q}$, we have $f\in 
\widehat{X}_{p,q}$ and 
\begin{equation*}
\Vert f\Vert _{\widehat{X}_{p,q}}\leq c^{-1}\Vert x_{0}\Vert _{X}\leq
c^{-1}M_{1},
\end{equation*}%
where $M_{1}$ is the norm of the inclusion of $\Delta \left( \overline{X}%
\right) $ into $X$. Then, using the simple homogeneity argument, we conclude
that $\Delta \left( \overline{L^{\infty }}\right) \subset \widehat{X}_{p,q}$.

Next, we show that $\widehat{X}_{p,q}\subset \Sigma \left( \overline{%
L^{\infty }}\right) $. Take $f\in \widehat{X}_{p,q}$ and select $\left\{
x_{i}\right\} _{i=1}^{\infty }\in l^{q}\left( X\right) $ such that 
\begin{equation}
\left\vert f(t)\right\vert \leq \left( \sum_{i=1}^{\infty }K\left( t,x_{i};%
\overline{X}\right) ^{p}\right) ^{1/p},\;\;t>0.  \label{eq4}
\end{equation}%
Combining this together with the inequality 
\begin{equation*}
\min \left( 1,1/t\right) K\left( t,x_{i};\overline{X}\right) \leq \left\Vert
x_{i}\right\Vert _{\Sigma \left( \overline{X}\right) }\leq M_{2}\left\Vert
x_{i}\right\Vert _{X},\;\;i=1,2,\dots \;\;\mbox{and}\;\;t>0,
\end{equation*}%
where $M_{2}$ is the norm of the inclusion $X\subset \Sigma \left( \overline{%
X}\right) $, we infer 
\begin{equation*}
\min \left( 1,1/t\right) \left\vert f(t)\right\vert \leq M_{2}\left(
\sum_{i=1}^{\infty }\left\Vert x_{i}\right\Vert _{X}^{q}\right)
^{1/q}<\infty .
\end{equation*}%
Hence, $f\in \Sigma \left( \overline{L^{\infty }}\right) $ and $\Vert f\Vert
_{\Sigma \left( \overline{L^{\infty }}\right) }\leq M_{2}\Vert f\Vert _{%
\widehat{X}_{p,q}}$.

Prove now that $\widehat{X}_{p,q}$ is a K-monotone space with respect to the
couple $\overline{L^{\infty }}$ with constant $1$. To this end, assume that $f\in \widehat{X}_{p,q}$, $g\in \Sigma (\overline{L^{\infty }})$ and 
\begin{equation}
K\left( t,g;\overline{L^{\infty }}\right) \leq K\left( t,f;\overline{%
L^{\infty }}\right) ,\;\;t>0.  \label{eq5}
\end{equation}%
Suppose $f$ satisfies \eqref{eq4} for a sequence $\left\{ x_{i}\right\}
_{i=1}^{\infty }\in l^{q}\left( X\right) $. Arguing in the same way as above, we see that from inequality \eqref{eq4} it follows  
\begin{equation*}
K\left( t,f;\overline{L^{\infty }}\right) \leq \left( \sum_{i=1}^{\infty
}K\left( t,x_{i};\overline{X}\right) ^{p}\right) ^{1/p},\;\;t>0.
\end{equation*}%
Combining this inequality with \eqref{eq5}, we infer 
\begin{equation*}
|g(t)|\leq K\left( t,g;\overline{L^{\infty }}\right) \leq \left(
\sum_{i=1}^{\infty }K\left( t,x_{i};\overline{X}\right) ^{p}\right)
^{1/p},\;\;t>0.
\end{equation*}%
Therefore, $g\in \widehat{X}_{p,q}$ and $\Vert g\Vert _{\widehat{X}%
_{p,q}}\leq \Vert f\Vert _{\widehat{X}_{p,q}}$. Thus, the desired claim is
proved. Since every K-monotone space with respect to the couple $\overline{%
L^{\infty }}$ with constant $1$ has the exact interpolation property with
respect to $\overline{L^{\infty }}$, the proof is complete.
\end{proof}

Now, we are ready to prove the main results of this section. Let us begin
with the case of Banach couples.

\begin{theorem}
\label{th6.1} Let $\overline{X}$ be a Banach couple and $X$ be a $K$%
-monotone quasi-Banach space with respect to $\overline{X}$. Then, $X$ is $%
K\left( 1,q\right) $-monotone for some $q\in (0,1]$. Moreover, we have 
\begin{equation*}
Int^{KM}\left( \overline{X}\right) =Int^{K}\left( \overline{X}\right)
=\bigcup_{0<q\leq 1}Int_{\left( 1,q\right) }^{KM}\left( \overline{X}\right) .
\end{equation*}
\end{theorem}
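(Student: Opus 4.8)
The plan is to establish the three-fold identity by a short cycle of inclusions whose only substantial link is the assertion that a $K$-monotone space $X$ is already $K(1,q)$-monotone for a suitable $q$. First I would reduce to the uniform case: since every quasi-Banach space is an $F$-space, the $F$-space closed graph theorem applied to the projection $(x,y)\mapsto y$ on the (closed, hence complete) subspace $\{(x,y)\in X\times\Sigma(\overline X):K(\cdot,y;\overline X)\le K(\cdot,x;\overline X)\}$ of $X\times\Sigma(\overline X)$ shows that the $K$-monotonicity of $X$ is automatically uniform; call the resulting constant $C_X$. By the Aoki--Rolewicz theorem I then fix $q\in(0,1]$ for which $X$ is $q$-normable and, after an equivalent renorming, assume $\|\sum_i x_i\|_X\le(\sum_i\|x_i\|_X^q)^{1/q}$.

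The core step is to verify that $X$ is $K(1,q)$-monotone. So I take $\{x_i\}\in l^q(X)$ and $x\in\Sigma(\overline X)$ with $K(t,x;\overline X)\le\sum_i K(t,x_i;\overline X)$ for all $t>0$, and set $\varphi_i:=K(\cdot,x_i;\overline X)\in Conv$. Because $q\le1$, the condition $\sum_i\|x_i\|_X^q<\infty$ forces $\sum_i\|x_i\|_X<\infty$ (the tail terms satisfy $\|x_i\|_X\le\|x_i\|_X^q$), hence $\sum_i\varphi_i(1)\le M_2\sum_i\|x_i\|_X<\infty$, where $M_2$ is the norm of $X\hookrightarrow\Sigma(\overline X)$. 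Now I invoke the classical $K$-divisibility theorem for the Banach couple $\overline X$ (\cite[Theorem 3.2.7]{BK91}, \cite{Cw84}) to obtain $x_i'\in\Sigma(\overline X)$ with $x=\sum_i x_i'$ (in $\Sigma(\overline X)$) and $K(t,x_i';\overline X)\le\gamma\,K(t,x_i;\overline X)=K(t,\gamma x_i;\overline X)$. Uniform $K$-monotonicity applied to each pair $(\gamma x_i,x_i')$ gives $x_i'\in X$ with $\|x_i'\|_X\le C_X\gamma\|x_i\|_X$, so $\sum_i\|x_i'\|_X^q<\infty$; by $q$-normability and completeness the series $\sum_i x_i'$ converges in $X$, and as it also converges to $x$ in $\Sigma(\overline X)$, its $X$-limit is $x$. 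This yields $x\in X$ with $\|x\|_X\le C_X\gamma(\sum_i\|x_i\|_X^q)^{1/q}$, i.e. $X\in Int_{(1,q)}^{KM}(\overline X)$.

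Given this, the set identity follows quickly. The inclusion $\bigcup_q Int_{(1,q)}^{KM}(\overline X)\subseteq Int^{KM}(\overline X)$ is Proposition \ref{pr6.1} (a $K(1,q)$-monotone space is finitely $K(1,q)$-monotone, by padding finite families with zeros), while $Int^{KM}(\overline X)\subseteq\bigcup_q Int_{(1,q)}^{KM}(\overline X)$ is exactly the core step; so $Int^{KM}=\bigcup_q Int_{(1,q)}^{KM}$. Next, every $\overline X_{E:K}$ is uniformly $K$-monotone with constant $1$ (if $K(\cdot,y;\overline X)\le K(\cdot,x;\overline X)$ pointwise and $K(\cdot,x;\overline X)\in E$, then $K(\cdot,y;\overline X)\in E$ with no larger $E$-norm, since $E$ is a lattice), giving $Int^K\subseteq Int^{KM}$. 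Conversely, for a $K(1,q)$-monotone $X$ I would take $E:=\widehat X_{1,q}$ of Lemma \ref{properties of}, an exact interpolation lattice with respect to $\overline{L^\infty}$, and check $X=\overline X_{E:K}$: the single-term bound $\|K(\cdot,x;\overline X)\|_{\widehat X_{1,q}}\le\|x\|_X$ gives $X\hookrightarrow\overline X_{E:K}$, while membership $K(\cdot,x;\overline X)\in\widehat X_{1,q}$ unwinds precisely to an inequality $K(t,x;\overline X)\le\sum_i K(t,x_i;\overline X)$ with $\{x_i\}\in l^q(X)$, so $K(1,q)$-monotonicity returns $x\in X$ with $\|x\|_X\le R_{(1,q)}(X)\|x\|_{\overline X_{E:K}}$. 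Hence $\bigcup_q Int_{(1,q)}^{KM}\subseteq Int^K$, closing the cycle $Int^K\subseteq Int^{KM}=\bigcup_q Int_{(1,q)}^{KM}\subseteq Int^K$.

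The main obstacle is the core step, and its delicate point is that $K$-divisibility only produces a decomposition converging in $\Sigma(\overline X)$; it is precisely the $q$-normability supplied by Aoki--Rolewicz that upgrades this to convergence in $X$ with the required $l^q$-estimate, thereby manufacturing the uniform $K(1,q)$-constant out of bare (uniform) $K$-monotonicity. A secondary point is the automatic uniformity of $K$-monotonicity, for which I rely on the $F$-space closed graph theorem rather than the lattice-specific argument of \cite{CwNi03}, since here $X$ need not be a lattice.
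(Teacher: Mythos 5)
Your core step---deducing $K(1,q)$-monotonicity of a \emph{uniformly} $K$-monotone $q$-normable space by feeding the hypothesis $K(t,x;\overline X)\le\sum_i K(t,x_i;\overline X)$ into the classical $K$-divisibility theorem for Banach couples and then summing the resulting pieces $x_i'$ in $X$ via $q$-normability---is correct, and it is a genuinely more self-contained route than the paper's, which instead quotes \cite[Theorem~4.1.11]{BK91} and \cite{Nil83} to write $X=\overline X_{E:K}$ with a $K(1,q)$-monotone parameter $E$ and then transfers the property to $X$ via Proposition \ref{pr6.7}(i). Your handling of the set identities is also sound: Proposition \ref{pr6.1} gives $\bigcup_q Int^{KM}_{(1,q)}(\overline X)\subseteq Int^{KM}(\overline X)$, the lattice property of the parameter gives $Int^{K}(\overline X)\subseteq Int^{KM}(\overline X)$, and the passage through $\widehat X_{1,q}$ of Lemma \ref{properties of} reproduces exactly the argument of Theorem \ref{th6.3}(a).

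There is, however, a genuine gap in your reduction to the uniform case. The set $G=\{(x,y)\in X\times\Sigma(\overline X):K(\cdot,y;\overline X)\le K(\cdot,x;\overline X)\}$ is \emph{not} a linear subspace: from $(x_1,y_1),(x_2,y_2)\in G$ one only gets $K(\cdot,y_1+y_2)\le K(\cdot,x_1)+K(\cdot,x_2)$, and $K(\cdot,x_1+x_2)$ can be far smaller than that sum (take $x_2=-x_1$). The closed graph theorem for $F$-spaces applies to linear maps between $F$-spaces, so it cannot be invoked for the ``projection'' restricted to $G$; and even setting linearity aside, bounding that projection into $\Sigma(\overline X)$ would not produce the estimate $\|y\|_X\le C\|x\|_X$ you actually need. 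This is not cosmetic: your core step genuinely uses the uniform constant $C_X$ to control $\|x_i'\|_X$, and automatic uniformity of $K$-monotonicity for a general (non-lattice) quasi-Banach intermediate space is a nontrivial fact---the paper records it only for lattices, by adapting \cite[Theorem~6.1]{CwNi03}. Possible repairs: restrict the first assertion to uniformly $K$-monotone $X$ (which suffices for the displayed identity, since $Int^{KM}(\overline X)$ consists of uniformly $K$-monotone spaces by definition); or run a Baire-category argument on the sets $\{x\in X:\sup\{\|y\|_X:K(\cdot,y;\overline X)\le K(\cdot,x;\overline X)\}\le n\}$ together with two-term $K$-divisibility (this needs $X$ relatively closed in $\Sigma(\overline X)$ to pass to limits); or fall back on the external results the paper cites.
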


\begin{proof}
Recall first that $X$ is $q$-normable for some $0<q\le 1$ (see Section \ref{Prel4}). Then, if $X$ is a $K$-monotone quasi-Banach space with respect to a Banach couple $\overline{X}$, we have $X=\overline{X}_{E:K}$, where $E\in Int\left( \overline{L}_\infty\right) $ is 
$K\left( 1,q\right)$-monotone (see \cite[ Theorem~4.1.11]{BK91} for $q=1$ and \cite[Theorem 4.1 and Corollary 4.3]{Nil83} for $0<q<1$). Applying Proposition \ref{pr6.7}(i), we see that $X$ is $K\left( 1,q\right)$-monotone as well, and the first assertion of the theorem follows. In addition, we have
\begin{equation*}
Int^{KM}\left( \overline{X}\right) =Int^{K}\left( \overline{X}\right)\;\;\mbox{and}\;\;Int^{KM}\left( \overline{X}\right)\subset\bigcup_{0<q\leq 1}Int_{\left( 1,q\right) }^{KM}\left( \overline{X}\right) .
\end{equation*}
Since from Proposition \ref{pr6.1} it follows that every $%
K\left( 1,q\right)$-monotone space is uniformly $K$-monotone, the theorem is proved.
\end{proof}

\begin{corollary}
If a Banach couple $\overline{X}$ has the Calder\'{o}n-Mityagin property, then 
\begin{equation*}
Int\left( \overline{X}\right) =Int^{KM}\left( \overline{X}\right)
=Int^{K}\left( \overline{X}\right) =\bigcup _{0<q\leq 1}Int_{\left(
1,q\right) }^{KM}\left( \overline{X}\right) .
\end{equation*}
\end{corollary}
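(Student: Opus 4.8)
The plan is to reduce the corollary to Theorem \ref{th6.1}, which already furnishes the three rightmost equalities
\begin{equation*}
Int^{KM}\left( \overline{X}\right) =Int^{K}\left( \overline{X}\right)
=\bigcup_{0<q\leq 1}Int_{\left( 1,q\right) }^{KM}\left( \overline{X}\right) .
\end{equation*}
It therefore remains only to establish the leftmost equality $Int\left( \overline{X}\right) =Int^{KM}\left( \overline{X}\right) $. The inclusion $Int^{KM}\left( \overline{X}\right) \subseteq Int\left( \overline{X}\right) $ holds for every quasi-Banach couple, since (as recorded in Section \ref{Prel1}) each uniformly $K$-monotone space is automatically an interpolation space. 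Thus the whole content lies in the reverse inclusion.

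For that reverse inclusion I would take an arbitrary $X\in Int\left( \overline{X}\right) $ and first verify that $X$ is $K$-monotone with respect to $\overline{X}$. Suppose $x\in X$ and $y\in \Sigma \left( \overline{X}\right) $ satisfy $K\left( t,y;\overline{X}\right) \leq K\left( t,x;\overline{X}\right) $ for all $t>0$. Since $\overline{X}$ has the Calder\'{o}n-Mityagin property, there is an operator $T\in \mathfrak{L}\left( \overline{X}\right) $ with $y=Tx$; and because $X$ is an interpolation space, $T$ maps $X$ into itself, so that $y=Tx\in X$. Hence $X$ is a $K$-monotone quasi-Banach space with respect to $\overline{X}$.

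The decisive step is then to upgrade this membership from $K$-monotonicity to $Int^{KM}\left( \overline{X}\right) $, and here no uniform form of the Calder\'{o}n-Mityagin hypothesis is required: by Theorem \ref{th6.1} every $K$-monotone quasi-Banach space with respect to the Banach couple $\overline{X}$ is in fact $K\left( 1,q\right) $-monotone for some $q\in (0,1]$, and by Proposition \ref{pr6.1} such a space is uniformly $K$-monotone, i.e.\ $X\in Int^{KM}\left( \overline{X}\right) $. This yields $Int\left( \overline{X}\right) \subseteq Int^{KM}\left( \overline{X}\right) $ and closes the chain of equalities. I expect the only subtle point to be exactly this last implication: the passage from the \emph{pointwise} existence of a transforming operator (the non-uniform Calder\'{o}n-Mityagin assumption) to a \emph{uniform} norm estimate is not obtained directly from the hypothesis, but is delivered precisely by the $K\left( 1,q\right) $-monotone description of Theorem \ref{th6.1}.
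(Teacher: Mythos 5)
Your argument is correct and is exactly the intended derivation: the paper leaves this corollary unproved because it follows immediately from Theorem \ref{th6.1} together with the observation already recorded in Section \ref{Prel1} that the (non-uniform) Calder\'{o}n-Mityagin property makes every interpolation space $K$-monotone, after which Theorem \ref{th6.1} and Proposition \ref{pr6.1} upgrade this to uniform $K$-monotonicity. Your identification of that upgrade as the only subtle point is also the right reading.
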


Let us proceed with the quasi-Banach setting.

\begin{theorem}
\label{th6.3} (a) Let $0<q\leq p\leq 1$ and let $X$ be a $K\left( p,q\right) 
$-monotone quasi-Banach space with respect to a quasi-Banach couple $%
\overline{X}.$ Then, $X=\overline{X}_{E:K}$ (with equivalence of norms),
where $E\in Int\left( \overline{L_{\infty }}\right) $. Furthermore, if $X$
is a normalized $K\left( p,q\right) $-monotone space, then $X$ is isometric
to the above $K$-space. Thus, every $K\left( p,q\right) $-monotone space $X$
can be renormed so that $X$ becomes isometric to a $K$-space with respect to 
$\overline{X}.$

(b) Conversely, let $\overline{X}$ be a quasi-Banach couple, $X=\overline{X}%
_{E:K}$ and either $X$ (with respect to $\overline{X}$) or $E$ (with respect
to $\overline{L_\infty}$) be mutually closed. Then $X$ is $K\left(
p,q\right) $-monotone for some $0<q\leq p\leq 1.$
\end{theorem}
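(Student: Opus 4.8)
The plan is to establish part (a) by producing the parameter $E$ explicitly as the lattice $\widehat{X}_{p,q}$ built just before Lemma~\ref{properties of}, and to establish the converse part (b) by combining the convexity criterion of Proposition~\ref{pr6.8a} with the closure argument of Proposition~\ref{pr6.2}. For part (a) I would set $E:=\widehat{X}_{p,q}$; Lemma~\ref{properties of}(ii) guarantees $E\in Int(\overline{L^{\infty}})$, so $\overline{X}_{E:K}$ is a legitimate $K$-space. One inclusion is immediate: for $x\in X$ the one-term sequence $\{x,0,0,\dots\}$ witnesses $K(\cdot,x;\overline{X})\in\widehat{X}_{p,q}$ with $\|K(\cdot,x;\overline{X})\|_{\widehat{X}_{p,q}}\le\|x\|_{X}$, so $\|x\|_{\overline{X}_{E:K}}\le\|x\|_{X}$. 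For the reverse inclusion, take $x\in\overline{X}_{E:K}$, i.e.\ $K(\cdot,x;\overline{X})\in\widehat{X}_{p,q}$; unravelling the definition of $\widehat{X}_{p,q}$ yields $\{x_{i}\}\in l^{q}(X)$ with $K(t,x;\overline{X})\le(\sum_{i}K(t,x_{i};\overline{X})^{p})^{1/p}$ for all $t>0$. This is precisely the hypothesis of the $K(p,q)$-monotonicity of $X$, so $x\in X$ and $\|x\|_{X}\le R_{(p,q)}(X)(\sum_{i}\|x_{i}\|_{X}^{q})^{1/q}$; taking the infimum over admissible sequences gives $\|x\|_{X}\le R_{(p,q)}(X)\,\|x\|_{\overline{X}_{E:K}}$. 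These two estimates furnish the equivalence of norms.

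When $X$ is normalized one has $R_{(p,q)}(X)=1$, the two estimates coincide, and the identification $X=\overline{X}_{E:K}$ becomes isometric. The final ``thus'' assertion then follows by a preliminary renorming: by Propositions~\ref{pr6.5} and~\ref{pr6.6} an arbitrary $K(p,q)$-monotone $X$ carries an equivalent norm making it a normalized $K(p,s)$-monotone space for some $0<s\le q$, and the isometric statement applied to the pair $(p,s)$ (so with $E=\widehat{X}_{p,s}$) realizes this renormed $X$ as a $K$-space with respect to $\overline{X}$.

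For part (b) the starting observation is that $E$, being a quasi-Banach function lattice, is $q$-normable for some $0<q\le 1$ by the Aoki--Rolewicz theorem and hence $(1,q)$-convex (see Section~\ref{Prel4}). Proposition~\ref{pr6.8a}(ii), applied with $p=1$ (and which needs no convexity of $\overline{X}$), then shows \emph{unconditionally} that $X=\overline{X}_{E:K}$ is finitely $K(1,q)$-monotone. It remains only to upgrade ``finitely'' to the full property, and this is where the closure hypothesis enters through Proposition~\ref{pr6.2}: if $X$ is relatively closed in $\Sigma(\overline{X})$, that proposition at once gives that $X$ is $K(1,q)$-monotone, i.e.\ $K(p,q)$-monotone with $p=1$, settling this case.

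The remaining case, in which only $E$ is assumed relatively closed in $\Sigma(\overline{L^{\infty}})$, I would reduce to the previous one by showing that relative closedness of $E$ forces relative closedness of $X=\overline{X}_{E:K}$; this is the step I expect to demand the most care. Given a bounded sequence $\{x_{n}\}\subset\overline{X}_{E:K}$ with $x_{n}\to x$ in $\Sigma(\overline{X})$, the functions $g_{n}:=K(\cdot,x_{n};\overline{X})$ are bounded in $E$, and the quasi-subadditivity of the $K$-functional together with the quasi-concavity bound $K(t,y;\overline{X})\le\max(1,t)\,\|y\|_{\Sigma(\overline{X})}$ gives $\sup_{t>0}\min(1,1/t)\,|g_{n}(t)-K(t,x;\overline{X})|\le C\|x_{n}-x\|_{\Sigma(\overline{X})}\to 0$, hence $g_{n}\to K(\cdot,x;\overline{X})$ in $\Sigma(\overline{L^{\infty}})$. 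Relative closedness of $E$ then places the limit $K(\cdot,x;\overline{X})$ in $E$, so $x\in\overline{X}_{E:K}$, proving $X$ relatively closed. Once this is known, Proposition~\ref{pr6.2} concludes exactly as in the first case, and in either situation $X$ is $K(1,q)$-monotone.
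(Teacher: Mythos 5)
Part (a) of your proposal and the first half of part (b) are correct and essentially coincide with the paper's own argument: the paper also takes $E=\widehat{X}_{p,q}$, obtains $\Vert x\Vert_{\overline{X}_{E:K}}\leq\Vert x\Vert_{X}$ from the one-term sequence and the reverse estimate from $K(p,q)$-monotonicity, and handles the normalized/isometric case via Proposition \ref{pr6.6}; in (b) the paper likewise combines normability of $E$, Proposition \ref{pr6.8a}(ii) and Proposition \ref{pr6.2} (it keeps $p$ equal to a normability index of the couple rather than taking $p=1$, but for the conclusion ``for some $0<q\leq p\leq1$'' your choice is equally admissible).

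The genuine gap is in your final step, the implication ``$E$ mutually closed $\Rightarrow\overline{X}_{E:K}$ relatively closed''. Your bound $\sup_{t>0}\min(1,1/t)\,|g_{n}(t)-K(t,x;\overline{X})|\leq C\Vert x_{n}-x\Vert_{\Sigma(\overline{X})}$ rests on the reverse triangle inequality $|K(t,a;\overline{X})-K(t,b;\overline{X})|\leq K(t,a-b;\overline{X})$, which holds only when $K(t,\cdot\,;\overline{X})$ is a norm on $\Sigma(\overline{X})$. For a genuine quasi-Banach couple one only has $K(t,a;\overline{X})\leq C'\bigl(K(t,b;\overline{X})+K(t,a-b;\overline{X})\bigr)$ with $C'>1$, which gives $K(t,x_{n};\overline{X})-K(t,x;\overline{X})\leq(C'-1)K(t,x;\overline{X})+C'K(t,x_{n}-x;\overline{X})$; the term $(C'-1)K(t,x;\overline{X})$ does not vanish, so $g_{n}$ need not converge to $K(\cdot,x;\overline{X})$ in $\Sigma(\overline{L^{\infty}})$. (The paper itself does not argue this step directly but refers to \cite{Nil82} and \cite{BK91} ``adopted suitably to the quasi-Banach case''.) The step is repairable by a one-sided truncation: put $h_{n}:=\min\bigl(K(\cdot,x;\overline{X}),\,C'K(\cdot,x_{n};\overline{X})\bigr)$. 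By the ideal property of the lattice $E$ one has $h_{n}\in E$ with $\Vert h_{n}\Vert_{E}\leq C'\sup_{m}\Vert x_{m}\Vert_{\overline{X}_{E:K}}$, while the one-sided quasi-triangle inequality yields $0\leq K(t,x;\overline{X})-h_{n}(t)\leq C'K(t,x-x_{n};\overline{X})\leq C'\max(1,t)\Vert x-x_{n}\Vert_{\Sigma(\overline{X})}$, so $h_{n}\to K(\cdot,x;\overline{X})$ in $\Sigma(\overline{L^{\infty}})$ and the relative closedness of $E$ applies (at the cost of a harmless constant $C'$ in the resulting norm estimate).
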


\begin{proof}
(a) Show that it suffices to take for $E$ the lattice $\widehat{X}_{p,q}$
for the given $0\le q\le p\le 1$ (see its construction in the beginning of
the section). Recall that, by Lemma \ref{properties of}, $\widehat{X}_{p,q}$
is a $K\left( p,q\right)$-monotone quasi-Banach lattice, which is an exact
interpolation space with respect to the couple $\overline{L^{\infty }}$.

Let $\widetilde{X}$ denote the space $\overline{X}_{\widehat{X}_{p,q}:K}$.
By construction, it follows immediately that $X\subseteq \widetilde{X}$ and $%
\Vert x\Vert _{\widetilde{X}}\leq \Vert x\Vert _{X}$.

Conversely, assume~$x\in \widetilde{X}$, i.e., $K\left( \cdot ,x;\overline{X}%
\right) \in \widehat{X}_{p,q}.$ This means that there exists a sequence $%
\{x_{i}\}\in l^{q}(X)$ such that 
\begin{equation*}
K\left( t,x;\overline{X}\right) \leq \left( \sum_{i=1}^{\infty }K\left(
t,x_{i};\overline{X}\right) ^{p}\right) ^{1/p},\;\;t>0.
\end{equation*}%
Then, since $X$ is a $K\left( p,q\right)$-monotone space with respect to $%
\overline{X}$, we get $x\in X$ and 
\begin{equation*}
\|x\|_X\le R_{(p,q)}(X) \left( \sum_{i=1}^{\infty }\|x_i\|_X^q\right) ^{1/q}.
\end{equation*}
Taking the infimum over all admissible sequences $\{x_{i}\}\in l^{q}(X)$
implies then that $\|x\|_X\le R_{(p,q)}(X)\|x\|_{\widetilde{X}}$. Thus, $X=%
\widetilde{X}$, and the first assertion of part (a) is proved. The second
one follows now from the definition of a normalized $K\left( p,q\right)$%
-monotone space and Proposition \ref{pr6.6}.

(b) Choosing $p,s\in (0,1]$ in an appropriate way, we can assume that the
couple $\overline{X}$ and the lattice $E$ are $p$- and $s$-normable,
respectively. Clearly, then $E$ is $\left(1,s\right) $-convex, and hence
from \cite[Proposition~1.3]{CT-86} it follows that $E$ is $\left(p,q\right) $%
-convex, where $q$ is defined by $1/q-1/p=1/s-1.$ Thus, according to
Proposition \ref{pr6.8a}(ii), $X$ is finitely $K\left( p,q\right) $-monotone.

Now, if $X$ is mutually closed with respect to $\overline{X}$, it remains to
apply Proposition \ref{pr6.2}. The desired result follows also in the case
when $E$ is mutually closed with respect to $\overline{L_{\infty }}$,
because the last condition assures that the $K$-space $X=\overline{X}_{E:K}$
has analogous property with respect to $\overline{X}$ (see the reasoning used in \cite[Theorem 3.20]{Nil82} or \cite[Lemma 3.6.8 ]{BK91} adopted suitably to the quasi-Banach case).
\end{proof}

Next, we extend the result of Theorem \ref{th6.1} to the case of $p$-convex quasi-Banach lattice couples with $0<p<1$.

\begin{theorem}
\label{th6.2} Let $\overline{X}$ be a $p$-convex quasi-Banach lattice
couple, where $0<p<1$, and let $X$ be a $K$-monotone quasi-Banach lattice
with respect to $\overline{X}$. Then $X$ is $K\left( p,q\right) $-monotone
for some $q\in (0,p]$ and hence $X\in Int^{K}\left( \overline{X}\right) $.
Moreover, we have 
\begin{equation*}
Int^{KM}\left( \overline{X}\right) =Int^{K}\left( \overline{X}\right)
=\bigcup_{0<q\leq p\leq 1}Int_{\left( p,q\right) }^{KM}\left( \overline{X}%
\right) .
\end{equation*}
\end{theorem}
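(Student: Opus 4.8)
The plan is to reduce the statement to results already established, the only genuinely new ingredient being a convexity remark about $X$. Since $X$ is a quasi-Banach lattice, the Aoki--Rolewicz theorem (see \cite[Lemma~3.10.1]{BL76}) shows that $X$ is $s$-normable, and hence $(1,s)$-convex, for some $s\in(0,1]$. Arguing exactly as in the proof of Theorem \ref{th6.3}(b), \cite[Proposition~1.3]{CT-86} then upgrades this to $(p,q)$-convexity of $X$, where $q$ is defined by $1/q-1/p=1/s-1$. Because $0<p<1$ and $0<s\le 1$, one has $1/q\ge 1/p$, so that $q\in(0,p]$ and in particular $0<q\le p\le 1$.

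Granting this choice of $q$, the first assertion is immediate. The couple $\overline{X}$ is $p$-convex and $X$ is an intermediate quasi-Banach lattice with respect to $\overline{X}$; by hypothesis $X$ is $K$-monotone, and by the previous paragraph it is $(p,q)$-convex. Thus condition (b) of Proposition \ref{pr6.8} is satisfied, and the implication (b)$\Longrightarrow$(c) there yields that $X$ is $K(p,q)$-monotone. Theorem \ref{th6.3}(a) now furnishes a lattice $E\in Int(\overline{L^{\infty}})$ with $X=\overline{X}_{E:K}$, whence $X\in Int^{K}(\overline{X})$.

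For the chain of equalities I would verify four inclusions. First, $Int^{K}(\overline{X})\subseteq Int^{KM}(\overline{X})$ holds with no convexity hypothesis: if $X=\overline{X}_{E:K}$ and $K(t,y;\overline{X})\le K(t,x;\overline{X})$ for all $t>0$ with $x\in X$, then solidity of the function lattice $E$ gives $K(\cdot,y;\overline{X})\in E$ and $\|K(\cdot,y;\overline{X})\|_{E}\le\|K(\cdot,x;\overline{X})\|_{E}$, so $y\in X$ with $\|y\|_{X}\le\|x\|_{X}$; thus every $K$-space is uniformly $K$-monotone. Secondly, the reverse inclusion $Int^{KM}(\overline{X})\subseteq Int^{K}(\overline{X})$ is exactly the first assertion applied to an arbitrary $K$-monotone lattice, which gives $Int^{KM}(\overline{X})=Int^{K}(\overline{X})$. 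Thirdly, the first assertion places each $X\in Int^{KM}(\overline{X})$ in $Int_{(p,q)}^{KM}(\overline{X})$ for the single pair $(p,q)$ produced above, so $Int^{KM}(\overline{X})\subseteq\bigcup_{0<q\le p\le 1}Int_{(p,q)}^{KM}(\overline{X})$. Finally, by Proposition \ref{pr6.1} every $K(p,q)$-monotone space, being finitely $K(p,q)$-monotone, is uniformly $K$-monotone, so the union is contained in $Int^{KM}(\overline{X})$. The four inclusions together give the desired triple equality.

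The delicate point is purely bookkeeping: one must check that the exponent $q$ extracted from $(1,s)$-convexity really lands in $(0,p]$ and that all hypotheses of Proposition \ref{pr6.8} (namely $0<q\le p\le 1$, $p$-convexity of the couple, and $X$ being an intermediate lattice) hold, after which the substantive labour has already been carried out in Proposition \ref{pr6.8} (through the $p$-$K$-divisibility of Theorem \ref{K-divisibility}) and in Theorem \ref{th6.3}. It should be noted, finally, that in the equalities the collection $Int^{KM}(\overline{X})$ is to be read within the class of intermediate quasi-Banach lattices, in keeping with the lattice conventions of Section \ref{Prel2}, since the inclusion $Int^{KM}(\overline{X})\subseteq Int^{K}(\overline{X})$ uses the lattice structure of $X$ essentially via Proposition \ref{pr6.8}.
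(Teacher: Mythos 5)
Your proof is correct, but it follows a genuinely different route from the paper's. The paper proves the first assertion by convexification: using Proposition \ref{renorming} it passes to the couple $\overline{X^{(1/p)}}$, which is lattice isomorphic to a Banach couple, checks via \cite[Proposition~3.4]{AsNi21} that $X^{(1/p)}$ is $K$-monotone with respect to it, invokes the Banach-case Theorem \ref{th6.1} (which rests on \cite[Theorem~4.1.11]{BK91} and \cite{Nil83}) to get $K(1,r)$-monotonicity of $X^{(1/p)}$, and then transfers back to obtain $K(p,rp)$-monotonicity of $X$. You instead stay in the quasi-Banach setting: you extract $(p,q)$-convexity of $X$ from Aoki--Rolewicz together with \cite[Proposition~1.3]{CT-86} (exactly the deduction the paper itself performs for the parameter $E$ in Theorem \ref{th6.3}(b), and with the correct bookkeeping $1/q=1/p+1/s-1$ giving $q\in(0,p]$), and then apply the implication (b)$\Rightarrow$(c) of Proposition \ref{pr6.8}, whose proof runs through the $p$-$K$-divisibility of Theorem \ref{K-divisibility} and the automatic uniformity of $K$-monotonicity from \cite[Theorem~6.1]{CwNi03}. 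Both arguments ultimately rest on $K$-divisibility, but yours shows that the paper's own Proposition \ref{pr6.8} already contains the substance of the theorem and makes the convexification detour unnecessary, while the paper's route produces the explicit exponent $q=rp$ tied to the normability of $X^{(1/p)}$. Your verification of the triple equality is more detailed than the paper's one-line appeal to Theorem \ref{th6.3}, and your closing caveat --- that $Int^{KM}(\overline{X})$ must be read within the class of intermediate quasi-Banach lattices for the inclusion $Int^{KM}(\overline{X})\subseteq Int^{K}(\overline{X})$ --- is a fair point that applies equally to the paper's own argument, since its first assertion is likewise only proved for lattices.
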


\begin{proof}
To prove the claim, we reduce the situation to the Banach case and then
apply Theorem \ref{th6.1}.

Since $\overline{X}$ is $p$-convex, by Proposition \ref{renorming}, the
couple $\overline{X^{\left( 1/p\right) }}$ is lattice isomorphic to a Banach
lattice couple. Show that the space $X^{\left( 1/p\right) }$ is $K$-monotone
with respect to $\overline{X^{\left( 1/p\right)} }$. Indeed, assume that $%
x\in X^{\left( 1/p\right) }$, $y\in \Sigma(\overline{X^{\left(
1/p\right) }})$ and 
\begin{equation*}
K\left(t,y;\overline{X^{\left( 1/p\right) }}\right)\le K\left(t,x;\overline{%
X^{\left( 1/p\right) }}\right),\;\;t>0.
\end{equation*}
Then, by \cite[Proposition~3.4]{AsNi21}, we have 
\begin{equation*}
K\left(t,y;\overline{X}\right)\le CK\left(t,x;\overline{X}\right),\;\;t>0,
\end{equation*}
with a constant $C>0$ independent of $x$ and $y$. Since $X$ is a $K$%
-monotone quasi-Banach lattice with respect to $\overline{X}$ and $x\in X$,
it follows that $y\in X^{\left( 1/p\right) }$, and the claim follows.

By Theorem \ref{th6.1}, we obtain that $X^{\left( 1/p\right) }$ is $%
K\left(1,r\right)$-monotone with respect to the couple $\overline{%
X^{\left(1/p\right) }}$ for some $r\in (0,1]$. Let us check that $X$ is $%
K\left( p,rp\right)$-monotone with respect to the couple $\overline{X}$.

Let 
\begin{equation*}
K\left(t,x;\overline{X}\right) \leq \left( \sum_{i=1}^{\infty
}K\left(t,x_{i};\overline{X}\right) ^{p}\right) ^{1/p},\;\;t>0,
\end{equation*}
for some $\left\{ x_{i}\right\}_{i=1}^\infty\in l^{rp}(X)$ and $x\in \Sigma
\left( \overline{X}\right) $. Then 
\begin{equation*}
K\left(t^{1/p},x;\overline{X}\right)^p \leq \sum_{i=1}^{\infty
}K\left(t^{1/p},x_{i};\overline{X}\right) ^{p},\;\;t>0,
\end{equation*}
whence, by \cite[Proposition~3.4]{AsNi21}, 
\begin{equation*}
K\left(t,x;\overline{X^{\left( 1/p\right) }}\right) \leq C^{\prime
}\sum_{i=1}^{\infty }K\left(t,x_{i};\overline{X^{\left( 1/p\right) }}%
\right),\;\;t>0,
\end{equation*}
with a constant $C^{\prime }>0$ independent of $x$ and $x_i$. Since $%
X^{\left( 1/p\right) }$ is $K\left( 1,r\right)$-monotone with respect to $%
\overline{X^{\left( 1/p\right) }}$, this yields 
\begin{equation*}
\|x\|_{X^{\left( 1/p\right) }}\le C^{\prime }R_{(1,r)}(X^{\left( 1/p\right)
})\left(\sum_{i=1}^{\infty }\|x_i\|_{X^{\left( 1/p\right) }}^r\right)^{1/r},
\end{equation*}
or equivalently, 
\begin{equation*}
\|x\|_{X}\le (C^{\prime }R_{(1,r)}(X^{\left( 1/p\right)
}))^{1/p}\left(\sum_{i=1}^{\infty }\|x_i\|_{X}^{rp}\right)^{1/(rp)}.
\end{equation*}
Thus, $X$ is $K\left( p,rp\right)$-monotone with respect to the couple $%
\overline{X}$, and the proof of the first assertion of the theorem is
completed. The second assertion is an immediate consequence of the first one
and Theorem \ref{th6.3}.
\end{proof}

\begin{corollary}
If $0<p<1$ and a $p$-convex quasi-Banach lattice couple $\overline{X}$ has
the Calder\'{o}n-Mityagin property, then 
\begin{equation*}
Int\left( \overline{X}\right) =Int^{KM}\left( \overline{X}\right)
=Int^{K}\left( \overline{X}\right) =\cup _{0<q\leq p}Int_{\left( p,q\right)
}^{KM}\left( \overline{X}\right) .
\end{equation*}
\end{corollary}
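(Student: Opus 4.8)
The plan is to read off three of the four asserted equalities directly from Theorem \ref{th6.2}, which applies verbatim since $\overline{X}$ is a $p$-convex quasi-Banach lattice couple with $0<p<1$. That theorem supplies
\[
Int^{KM}(\overline{X}) = Int^{K}(\overline{X}) = \bigcup_{0<q\leq p} Int_{(p,q)}^{KM}(\overline{X}),
\]
so it remains only to establish the single equality $Int(\overline{X}) = Int^{KM}(\overline{X})$. This is exactly the point at which the Calder\'{o}n-Mityagin hypothesis enters; without it the chain of equalities need not reach down to $Int(\overline{X})$.

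For the inclusion $Int^{KM}(\overline{X}) \subseteq Int(\overline{X})$ I would appeal to the general fact, recorded in Section \ref{Prel1}, that every uniform $K$-monotone space is an interpolation space; this holds for an arbitrary quasi-Banach couple and needs no convexity or lattice structure. The reverse inclusion $Int(\overline{X}) \subseteq Int^{KM}(\overline{X})$ is where the content lies. Given $X \in Int(\overline{X})$, the assumption that $\overline{X}$ is a Calder\'{o}n-Mityagin couple yields, again by the observation in Section \ref{Prel1}, that $X$ is $K$-monotone with respect to $\overline{X}$. I would then upgrade this to uniform $K$-monotonicity using the remark following \cite[Theorem~6.1]{CwNi03} recorded in Section \ref{Prel2}: every $K$-monotone quasi-Banach lattice with respect to a quasi-Banach lattice couple is automatically uniform $K$-monotone. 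Hence $X \in Int^{KM}(\overline{X})$.

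The main obstacle, such as it is, is to be certain that the intermediate spaces in play really are quasi-Banach \emph{lattices}, since the passage from $K$-monotone to uniform $K$-monotone rests essentially on the lattice structure of both $X$ and of the couple $\overline{X}$; for a bare quasi-Banach couple a separate closed-graph argument (as indicated in Section \ref{Prel1}) would be required instead. Once both inclusions are in hand, combining $Int(\overline{X}) = Int^{KM}(\overline{X})$ with the chain furnished by Theorem \ref{th6.2} delivers the full string of equalities, completing the proof.
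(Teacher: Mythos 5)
Your argument is correct and is precisely the reasoning the paper leaves implicit: the corollary is stated without proof as an immediate consequence of Theorem \ref{th6.2}, and the only additional content is the equality $Int\left( \overline{X}\right) =Int^{KM}\left( \overline{X}\right) $, obtained exactly as you describe by combining the Calder\'{o}n-Mityagin hypothesis (which gives $K$-monotonicity of every interpolation space, as noted at the end of Section \ref{Prel1}) with the automatic upgrade from $K$-monotone to uniform $K$-monotone for quasi-Banach lattices recorded in Section \ref{Prel2}, together with the trivial reverse inclusion. You also correctly isolate the one point deserving care, namely that this upgrade uses the lattice structure of the spaces involved, which is consistent with the paper's convention that for lattice couples the intermediate spaces under consideration are quasi-Banach lattices.
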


The following result can be applied to both Banach and quasi-Banach couples.

\begin{corollary}
If a $L$-convex quasi-Banach lattice couple $\overline{X}$ has the Calder\'{o}n-Mityagin property, then 
\begin{equation*}
Int\left( \overline{X}\right) =Int^{KM}\left( \overline{X}\right)
=Int^{K}\left( \overline{X}\right) =\bigcup _{0<q\leq p\leq 1}Int_{\left(
p,q\right) }^{KM}\left( \overline{X}\right) .
\end{equation*}
\end{corollary}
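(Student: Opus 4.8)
The plan is to reduce the $L$-convexity hypothesis to genuine $p$-convexity and then glue the two cases already treated — the Banach couple case (the corollary to Theorem \ref{th6.1}) and the strictly $p$-convex case $0<p<1$ (the corollary to Theorem \ref{th6.2}) — inserting the Calder\'{o}n-Mityagin hypothesis to reach $Int(\overline X)$. First I would invoke \cite[Theorem~2.2]{Kal84}: since $\overline X$ is $L$-convex, each of $X_0,X_1$ is $r_i$-convex for some $r_i>0$, and by monotonicity of convexity in the exponent both are $p$-convex for $p=\min(r_0,r_1)$. If this $p$ exceeds $1$, the couple is (isomorphic to) a Banach lattice couple and we replace $p$ by $1$; thus we may assume $p\in(0,1]$, with $p=1$ precisely in the Banach case.

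Next I would establish $Int^{KM}(\overline X)=Int^K(\overline X)=\bigcup_{0<q\le p\le 1}Int_{(p,q)}^{KM}(\overline X)$. The equality $Int^{KM}=Int^K$, together with the fact that every $K$-monotone lattice with respect to $\overline X$ is $K(p_0,q)$-monotone for some $q\in(0,p_0]$ (with $p_0$ the fixed convexity exponent), is exactly Theorem \ref{th6.2} when $p_0<1$ and Theorem \ref{th6.1} when $p_0=1$; this gives $Int^{KM}(\overline X)\subseteq\bigcup_{0<q\le p\le 1}Int_{(p,q)}^{KM}(\overline X)$, each space lying in the single member $Int_{(p_0,q)}^{KM}(\overline X)$. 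For the reverse inclusion I would use Proposition \ref{pr6.1}: a $K(p,q)$-monotone space is in particular finitely $K(p,q)$-monotone, hence uniformly $K$-monotone, so every member of the union lies in $Int^{KM}(\overline X)$. The two inclusions give the displayed equality in both cases.

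Finally I would add $Int(\overline X)$ via the Calder\'{o}n-Mityagin property. The inclusion $Int^{KM}(\overline X)\subseteq Int(\overline X)$ is automatic, since uniform $K$-monotone spaces are interpolation spaces (Section~\ref{Prel1}). Conversely, the Calder\'{o}n-Mityagin property forces every interpolation lattice $X$ to be $K$-monotone (Section~\ref{Prel1}), and the remark of Section~\ref{Prel2}, that a $K$-monotone quasi-Banach lattice over a quasi-Banach lattice couple is automatically \emph{uniformly} $K$-monotone, then places $X$ in $Int^{KM}(\overline X)$. This closes the four-fold equality; in effect the statement packages the two preceding corollaries, re-indexed over all admissible pairs $(p,q)$.

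The delicate point is the bookkeeping with the convexity exponent: the single $p_0$ furnished by $L$-convexity already represents each $K$-monotone lattice as a $K(p_0,q)$-monotone space, whereas the union in the statement runs over all $0<q\le p\le 1$, so it is the cheap inclusion $\bigcup Int_{(p,q)}^{KM}\subseteq Int^{KM}$ coming from Proposition \ref{pr6.1} that makes the two sides coincide. A secondary point is the boundary $p_0=1$, where one passes from Theorem \ref{th6.2} to Theorem \ref{th6.1} and uses that an $L$-convex couple which is $1$-convex is Banach, so that Theorem \ref{th6.1} genuinely applies.
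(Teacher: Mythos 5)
Your proposal is correct and follows essentially the route the paper intends: the corollary is stated without a separate proof precisely because it is the combination of the Banach-couple case (Theorem \ref{th6.1} and its corollary) and the $p$-convex case $0<p<1$ (Theorem \ref{th6.2} and its corollary), after using \cite[Theorem~2.2]{Kal84} to extract a common convexity exponent $p\in(0,1]$, with Proposition \ref{pr6.1} supplying the reverse inclusion for the union and the automatic uniformity of $K$-monotonicity for lattices (Section \ref{Prel2}) converting the Calder\'on--Mityagin hypothesis into $Int(\overline{X})\subseteq Int^{KM}(\overline{X})$. Your bookkeeping of the exponents and of the boundary case $p_0=1$ matches what the paper leaves implicit.
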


\begin{problem}
Let $\overline{X}$ be an arbitrary quasi-Banach couple. Then, the
definitions and Theorem \ref{th6.3} imply the following inclusions: 
\begin{equation*}
\bigcup _{0<q\leq p}Int_{\left( p,q\right) }^{KM}\left( \overline{X}\right)
\subseteq Int^{K}\left( \overline{X}\right) \subseteq Int^{KM}\left( 
\overline{X}\right) .
\end{equation*}%
The question is: To find necessary and sufficient conditions under which the
opposite inclusions hold. We note that, by Theorem \ref{th6.3}(b), the first
one can be reversed being restricted to mutually closed spaces.
\end{problem}

Applying Theorem \ref{th6.2} and Proposition \ref{pr6.8}, we get

\begin{theorem}
\label{Th-K-monotone-class} Let $0<q\leq p\leq 1$. Suppose that $\overline{X}%
=(X_{0},X_{1})$ is a $p$-convex quasi-Banach lattice couple and $X$ is a $%
(p,q)$-convex quasi-Banach lattice, which is $K$-monotone with respect to $%
\overline{X}$. Then, there exists a $(p,q)$-convex quasi-Banach lattice $E$
of measurable functions on $(0,\infty )$ such that $E\in Int\left(\overline{%
L^{\infty }}\right)$ and $X=\overline{X}_{E:K}.$
\end{theorem}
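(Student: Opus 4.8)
The plan is to read the hypotheses as exactly condition (b) in the trichotomy of Proposition~\ref{pr6.8}, upgrade it to the $K(p,q)$-monotonicity of condition (c), and then invoke the representation result of the previous part of the section together with the concrete parameter $\widehat{X}_{p,q}$, whose convexity has already been recorded.

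First I would note that $X$, being $K$-monotone with respect to $\overline{X}$, is in particular an intermediate quasi-Banach lattice with respect to the $p$-convex couple $\overline{X}$, so that the standing hypotheses of Proposition~\ref{pr6.8} are in force for the given pair $0<q\le p\le 1$. Since $X$ is assumed to be both $K$-monotone and $(p,q)$-convex, this is precisely statement (b) of that proposition; applying the implication (b)$\Longrightarrow$(c) I conclude that $X$ is $K(p,q)$-monotone with respect to $\overline{X}$. This is the only genuinely substantial step, and it is entirely outsourced to Proposition~\ref{pr6.8}, whose proof of (b)$\Longrightarrow$(c) in turn rests on the $p$-$K$-divisibility Theorem~\ref{K-divisibility}.

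Next, with $K(p,q)$-monotonicity in hand, I would apply Theorem~\ref{th6.3}(a): a $K(p,q)$-monotone space admits the representation $X=\overline{X}_{E:K}$ with equivalence of quasi-norms, and the argument there shows that one may take for $E$ the explicit lattice $\widehat{X}_{p,q}$ constructed at the beginning of the section. By Lemma~\ref{properties of}(ii), this $E=\widehat{X}_{p,q}$ is an exact interpolation space with respect to $\overline{L^{\infty}}$, hence $E\in Int(\overline{L^{\infty}})$, and by Lemma~\ref{properties of}(i) it is $(p,q)$-convex. Thus all three required properties of $E$ are obtained simultaneously, and the proof is complete.

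The main point to verify — rather than a real obstacle — is that the quoted pieces fit together without a gap: one must confirm that the given $X$ genuinely qualifies as an intermediate quasi-Banach lattice so that Proposition~\ref{pr6.8} applies, and that the parameter produced by Theorem~\ref{th6.3}(a) is the very $\widehat{X}_{p,q}$ analysed in Lemma~\ref{properties of}. Conceptually, Theorem~\ref{th6.2} already guarantees that every $K$-monotone lattice over a $p$-convex couple lies in $Int^{K}(\overline{X})$; the content of the present statement is the sharpening that, when $X$ is moreover $(p,q)$-convex, the parameter can be selected to be $(p,q)$-convex as well, and this refinement is exactly what pairing Proposition~\ref{pr6.8} with the parameter $\widehat{X}_{p,q}$ delivers.
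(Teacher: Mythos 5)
Your proof is correct and takes essentially the same route as the paper, whose entire proof is the phrase ``Applying Theorem \ref{th6.2} and Proposition \ref{pr6.8}, we get'': the decisive step in both is the implication (b)$\Longrightarrow$(c) of Proposition \ref{pr6.8}, which upgrades $K$-monotonicity plus $(p,q)$-convexity of $X$ to $K(p,q)$-monotonicity, after which the explicit parameter $\widehat{X}_{p,q}$ from Theorem \ref{th6.3}(a) together with Lemma \ref{properties of} supplies an $E$ that is simultaneously $(p,q)$-convex and in $Int\left(\overline{L^{\infty}}\right)$. Your substitution of Theorem \ref{th6.3}(a) and Lemma \ref{properties of} for the paper's citation of Theorem \ref{th6.2} is, if anything, the more accurate reading, since the $(p,q)$-convexity of the parameter is only visible through the concrete construction $\widehat{X}_{p,q}$.
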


\begin{corollary}
\label{cor0} Let $\overline{X}=(X_0,X_1)$ be a $L$-convex quasi-Banach
lattice couple. Then, every $L$-convex quasi-Banach lattice $X$ that is $K$%
-monotone with respect to $\overline{X}$ can be representable as $\overline{X%
}_{E:K}$, where $E$ is some $L$-convex quasi-Banach function lattice on $%
(0,\infty)$ such that $E\in Int\left(\overline{L^{\infty }}\right)$.
\end{corollary}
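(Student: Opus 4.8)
The plan is to deduce Corollary \ref{cor0} from Theorem \ref{Th-K-monotone-class} by exploiting the fundamental equivalence between $L$-convexity and $r$-convexity due to Kalton \cite[Theorem~2.2]{Kal84}. Since $\overline{X}=(X_0,X_1)$ is an $L$-convex quasi-Banach lattice couple, both $X_0$ and $X_1$ are $r_i$-convex for some $r_i>0$, $i=0,1$; similarly $X$ is $L$-convex and hence $r_2$-convex for some $r_2>0$. First I would choose a single exponent $p\in(0,1]$ with $p\le\min(r_0,r_1,r_2)$; then both $X_0$ and $X_1$ are $p$-convex (so $\overline{X}$ is a $p$-convex quasi-Banach lattice couple), and $X$ is $p$-convex, i.e., $(p,p)$-convex. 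Setting $q:=p$ we have $0<q\le p\le 1$ and $X$ is a $(p,q)$-convex quasi-Banach lattice that is $K$-monotone with respect to $\overline{X}$, so the hypotheses of Theorem \ref{Th-K-monotone-class} are met.

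Next I would invoke Theorem \ref{Th-K-monotone-class} directly. It yields a $(p,q)$-convex quasi-Banach lattice $E$ of measurable functions on $(0,\infty)$ with $E\in Int\left(\overline{L^{\infty}}\right)$ and $X=\overline{X}_{E:K}$. The only remaining point is to upgrade the conclusion "$E$ is $(p,q)$-convex" to "$E$ is $L$-convex." But with the choice $q=p$ above, the lattice $E$ produced is $(p,p)$-convex, i.e., $p$-convex, and a $p$-convex quasi-Banach lattice is automatically $L$-convex by \cite[Theorem~2.2]{Kal84}. This is exactly the mechanism already recorded in Lemma \ref{properties of}(i), where the parameter $\widehat{X}_{p,q}$ is noted to be $L$-convex precisely when $p=q$. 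Thus $E$ is an $L$-convex quasi-Banach function lattice on $(0,\infty)$ belonging to $Int\left(\overline{L^{\infty}}\right)$, and $X=\overline{X}_{E:K}$, which is the assertion of the corollary.

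I do not anticipate a genuine obstacle here, since the corollary is essentially the $p=q$ specialization of Theorem \ref{Th-K-monotone-class} combined with Kalton's characterization of $L$-convexity. The one step requiring a moment's care is the uniform choice of the exponent $p$: one must select $p$ small enough to simultaneously guarantee $p$-convexity of $X_0$, $X_1$ and $X$, and this is legitimate because $r$-convexity with constant $M^{(r)}$ implies $r'$-convexity for every $r'\le r$ (a lattice that is $r$-convex is in particular $r'$-convex for smaller $r'$, with the same or a controlled constant). Once $p$ is fixed with $p\le\min(r_0,r_1,r_2)$, the hypotheses of Theorem \ref{Th-K-monotone-class} hold with $q=p$, and the $L$-convexity of $E$ follows automatically from its $p$-convexity. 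Hence the proof is a short reduction, and I would present it as such rather than reproving any of the machinery underlying Theorem \ref{Th-K-monotone-class}.
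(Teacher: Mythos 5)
Your proposal is correct and follows essentially the same route as the paper: the paper's proof also selects $p\in(0,1]$ so that $X_0$, $X_1$ and $X$ are all simultaneously $p$-convex and then applies Theorem \ref{Th-K-monotone-class} together with Lemma \ref{properties of} (whose part (i) records that the parameter is $L$-convex when $p=q$). Your additional remark justifying the uniform choice of $p$ (that $r$-convexity passes down to smaller exponents, via Proposition \ref{renorming} and Maurey's theorem for the Banach convexification) is a point the paper leaves implicit, but it is the same argument.
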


\begin{proof}
We select $p\in (0,1]$ so that all spaces $X_{0},X_{1},X$ are $p$-convex and
then apply Theorem \ref{Th-K-monotone-class} together with Lemma \ref%
{properties of}.
\end{proof}

\begin{remark}
Similar results can be obtained also in a more general case of relative $K$%
-monotone spaces. Let $X$ and $Y$ be intermediate quasi-Banach lattices with
respect to quasi-Banach lattice couples $\overline{X}$ and $\overline{Y}$,
respectively. If $X$ and $Y$ are relative $K$-monotone spaces, then 
\begin{equation*}
X\subseteq \overline{X}_{\widehat{X}_{p,q}:K}\;\;\mbox{and}\;\;\overline{Y}_{%
\widehat{X}_{p,q}:K}\subseteq Y,
\end{equation*}%
provided that $\overline{X}$, $\overline{Y}$ are $p$-convex and $X$, $Y$ are 
$q$-convex, $0<q\leq p\leq 1$ (see details in \cite[Corollary 4.1.15]{BK91}
and \cite[Corollary 4.2]{Nil83}). Thus, a pair of relative $K$-monotone
spaces with respect to quasi-Banach lattice couples factors over a pair of $%
K $-spaces with the same parameter space. This result should be compared
with the corresponding property of relative interpolation spaces with
respect to Banach couples (see \cite[Theorem 13.XIV]{AG65} and \cite[Theorem
3.2.30]{BK91}).
\end{remark}

\begin{remark}
If $\overline{X}$ is a Banach lattice couple, Theorem \ref%
{Th-K-monotone-class} can be somewhat strengthened. For any $q$-normed space 
$X$ that is $K$-monotone with respect to $\overline{X}$ we have $X=\overline{%
X}_{E:K}$, where $E=Orb_{\overline{X}}^{q}\left( X;\overline{L^{\infty }}%
\right) $. Here, $Orb_{\overline{X}}^{q}\left( X;\overline{L^{\infty }}%
\right) $ denotes the space of all elements $f\in \Sigma \left( \overline{%
L^{\infty }}\right) $, which can be written as $f=\sum_{i}T_{i}x_{i}$, where 
$T_{i}:\overline{X}\rightarrow \overline{L^{\infty }},\left\Vert
T_{i}\right\Vert \leq 1$\thinspace\ and $\left\{ x_{i}\right\} \in
l^{q}\left( X\right) $ (see \cite{Nil83}).

Indeed, if $f\in \widehat{X}_{1,q},$ we can find a sequence $\left\{
x_{i}\right\} \in l^{q}\left( X\right) $ such that $\left\vert f(\cdot
)\right\vert \leq \sum_{i=1}^{\infty }K\left( \cdot ,x_{i};\overline{X}%
\right) $. Taking $\varepsilon >0$ and $T_{i}:\overline{X}\rightarrow \overline{%
L^{\infty }}$ with $T_{i}\left( x_{i}\right) (\cdot )=K\left( \cdot ,x_{i};%
\overline{X}\right) $ and $\left\Vert T_{i}\right\Vert <1+\varepsilon $, $%
i=1,2,\dots $ (see, for instance, \cite{BK91} and \cite{Nil83}), we select
then $h\in \Sigma \left( \overline{L^{\infty }}\right) $ such that $%
\left\vert h\right\vert \leq 1$ and 
\begin{equation*}
f(t)=h(t)\cdot \sum_{i=1}^{\infty }K\left( t,x_{i};\overline{X}\right)
,\;\;t>0.
\end{equation*}%
Hence, $f=\sum_{i=1}^{\infty }h\cdot T_{i}\left( x_{i}\right) $ and thus $%
f\in Orb_{\overline{X}}^{q}\left( X;\overline{L^{\infty }}\right) .$ This
implies that $\widehat{X}_{1,q}=Orb_{\overline{X}}^{q}\left( X;\overline{%
L^{\infty }}\right) $ isometrically.
\end{remark}

\begin{theorem}
\label{Th-K-monotone-class1} Let $\overline{X}=(X_{0},X_{1})$ be a $L$%
-convex quasi-Banach lattice couple. Then, for every $L$-convex quasi-Banach
lattice $X$ that is $K$-monotone with respect to $\overline{X}$ there is $%
p>0 $ such that 
\begin{equation*}
X=\left( \sum_{y\in X,\left\Vert y\right\Vert =1}Orb^{K}\left( y;\overline{X}%
\right) ^{p}\right) ^{1/p}.
\end{equation*}%
Moreover, 
\begin{equation*}
\left\{ K\left( \cdot ,x;\overline{X}\right) :\,x\in X\right\} \subset
\left( \sum_{y\in X,\left\Vert y\right\Vert =1}L^{\infty }\left( 1/\varphi
_{y}\right) ^{p}\right) ^{1/p},
\end{equation*}%
where $\varphi _{y}(\cdot )=K\left( \cdot ,y;\overline{X}\right) $.
\end{theorem}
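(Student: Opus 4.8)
The plan is to reduce everything to the notion of $K(p,p)$-monotonicity (Definition~\ref{def1}). Since $X_{0}$, $X_{1}$ and $X$ are all $L$-convex, by \cite[Theorem~2.2]{Kal84} I can fix one exponent $p\in(0,1]$ for which each of them is $p$-convex. Then $X$ is $(p,p)$-convex and $K$-monotone with respect to the $p$-convex couple $\overline{X}$, so the implication $(b)\Rightarrow(c)$ of Proposition~\ref{pr6.8} (taken with $q=p$) shows that $X$ is $K(p,p)$-monotone; write $R:=R_{(p,p)}(X)$ and let $M:=M^{(p)}\!\left(\Sigma(\overline{X})\right)$ be the $p$-convexity constant of the sum equipped with the $K$-functional quasi-norm, which is finite by \cite[Proposition~3.2]{AsNi21}. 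Put $S:=\{y\in X:\|y\|_{X}=1\}$, $\varphi_{y}(t):=K(t,y;\overline{X})$, and let $W:=\bigl(\sum_{y\in S}Orb^{K}(y;\overline{X})^{p}\bigr)^{1/p}$ denote the lattice $p$-sum inside $\Sigma(\overline{X})$, i.e. the set of $x$ admitting $x_{y}\in Orb^{K}(y;\overline{X})$, all but countably many zero, with $|x|\le\bigl(\sum_{y}|x_{y}|^{p}\bigr)^{1/p}$ in $\Sigma(\overline{X})$ and $\bigl(\sum_{y}\|x_{y}\|_{Orb^{K}(y)}^{p}\bigr)^{1/p}<\infty$, normed by the infimum of this last quantity.

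The inclusion $X\subseteq W$ is immediate: for $0\neq x\in X$ the element $y:=x/\|x\|_{X}$ lies in $S$, and homogeneity of the $K$-functional gives $\|x\|_{Orb^{K}(y)}=\sup_{t>0}\varphi_{y}(t)^{-1}K(t,x;\overline{X})=\|x\|_{X}$, so the single-term representation yields $x\in W$ with $\|x\|_{W}\le\|x\|_{X}$. For the reverse inclusion I take $x\in W$ with a representation as above and set $a_{y}:=\|x_{y}\|_{Orb^{K}(y)}$, so that $K(t,x_{y};\overline{X})\le a_{y}\varphi_{y}(t)$ for all $t>0$. Using the $p$-convexity of $\Sigma(\overline{X})$ (passing from finite to countable sums by monotone convergence, valid since $K(t,\cdot;\overline{X})$ is continuous in the $\Sigma$-quasi-norm for each fixed $t$) together with lattice-monotonicity of the $K$-functional, I obtain
\[
K(t,x;\overline{X})\le M\Bigl(\sum_{y\in S}K(t,x_{y};\overline{X})^{p}\Bigr)^{1/p}\le\Bigl(\sum_{y\in S}K(t,Ma_{y}\,y;\overline{X})^{p}\Bigr)^{1/p},\quad t>0.
\]
Since $\{Ma_{y}\,y\}\in l^{p}(X)$ with $\bigl(\sum_{y}\|Ma_{y}y\|_{X}^{p}\bigr)^{1/p}=M\bigl(\sum_{y}a_{y}^{p}\bigr)^{1/p}$, the $K(p,p)$-monotonicity of $X$ forces $x\in X$ and $\|x\|_{X}\le RM\bigl(\sum_{y}a_{y}^{p}\bigr)^{1/p}$; taking the infimum over representations gives $\|x\|_{X}\le RM\|x\|_{W}$. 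Hence $X=W$ with equivalent quasi-norms.

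For the final inclusion, I recall that the map $z\mapsto K(\cdot,z;\overline{X})$ identifies $Orb^{K}(y;\overline{X})$ quasi-isometrically with the weighted space $L^{\infty}(1/\varphi_{y})$, directly from the definitions of the orbit quasi-norm and of the weight. Applying this map to the representation of an arbitrary $x\in X=W$ produced above, the functions $g_{y}:=K(\cdot,x_{y};\overline{X})$ satisfy $g_{y}\in L^{\infty}(1/\varphi_{y})$ with $\|g_{y}\|_{L^{\infty}(1/\varphi_{y})}=a_{y}$ and $K(\cdot,x;\overline{X})\le\bigl(\sum_{y}g_{y}^{p}\bigr)^{1/p}$. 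As the target $p$-sum is a solid function lattice, $K(\cdot,x;\overline{X})$ itself belongs to $\bigl(\sum_{y\in S}L^{\infty}(1/\varphi_{y})^{p}\bigr)^{1/p}$, which is exactly the asserted containment.

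All the interpolation content is packaged in the $K(p,p)$-monotonicity, so the only genuine work is bookkeeping: making the $p$-sum $W$ precise over the uncountable index set $S$, and justifying the passage from the finite $p$-convexity estimate for $K(t,\cdot;\overline{X})$ to the countable one, that is, the convergence of $\bigl(\sum_{y}|x_{y}|^{p}\bigr)^{1/p}$ in $\Sigma(\overline{X})$ and the resulting limit inside the $K$-functional. This is where I expect the main technical care to be needed; everything else is a routine two-sided norm estimate.
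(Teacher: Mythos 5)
Your proof is correct, and it takes a genuinely more direct route than the paper's. The paper deduces the theorem from Theorem \ref{Th-K-monotone-class}: it first writes $X=\overline{X}_{\widehat{X}_{p,p}:K}$, then unpacks the definition of the parameter $\widehat{X}_{p,p}$ to produce, for $x\in X$, an infinite decomposition $|x|=\left(\sum_{i}|x_{i}|^{p}\right)^{1/p}$ with $x_{i}\in Orb^{K}\left(x_{i}/\Vert x_{i}\Vert_{X};\overline{X}\right)$, and reads both assertions off that decomposition (the reverse inclusion of the first assertion is left implicit there). You bypass the $K$-space representation entirely and work straight from the $K(p,p)$-monotonicity of $X$, which you correctly extract from Proposition \ref{pr6.8}, implication (b)$\Rightarrow$(c). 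Your key simplification is the observation that the single-term representation $x=\Vert x\Vert_{X}\cdot\left(x/\Vert x\Vert_{X}\right)$ already gives $\Vert x\Vert_{Orb^{K}(x/\Vert x\Vert_{X})}=\Vert x\Vert_{X}$, which makes $X\subseteq W$ (and, in fact, the second assertion as well) trivial, with no appeal to divisibility; the content is then concentrated in $W\subseteq X$, where you correctly combine the $p$-convexity of $\Sigma\left(\overline{X}\right)$ with $K(p,p)$-monotonicity, and in particular you handle the one genuinely delicate point: the dominating sequence must lie in $l^{p}(X)$, which you arrange by replacing $x_{y}$ with the elements $Ma_{y}y\in X$. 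Two minor remarks: in your last paragraph the constant $M$ silently drops out of the inequality $K\left(\cdot,x;\overline{X}\right)\leq\left(\sum_{y}g_{y}^{p}\right)^{1/p}$ --- harmless because the $p$-sum is solid and closed under dilation, but it should be carried along; and the finite-to-countable passage you flag is indeed routine, since $\sum_{y}\Vert x_{y}\Vert_{\Sigma\left(\overline{X}\right)}^{p}\leq M_{2}^{p}\sum_{y}a_{y}^{p}<\infty$ makes the partial sums $\left(\sum_{y\in F}|x_{y}|^{p}\right)^{1/p}$ a Cauchy net in the complete $p$-convex lattice $\Sigma\left(\overline{X}\right)$, and $K(t,\cdot\,;\overline{X})$ is for each fixed $t$ a quasi-norm equivalent to $\Vert\cdot\Vert_{\Sigma\left(\overline{X}\right)}$, hence continuous.
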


\begin{proof}
Let $p>0$ is chosen so that all the spaces $X_{0}$, $X_{1}$ and $X$ are $p$%
-convex. Then, by Theorem \ref{Th-K-monotone-class}, $X=\overline{X}_{%
\widehat{X}_{p,p}:K}$. Hence, if $x\in X$, we have 
\begin{equation*}
K\left( t,x;\overline{X}\right) \leq \left( \sum_{i=1}^{\infty }K\left(
t,x_{i};\overline{X}\right) ^{p}\right) ^{1(p},\;\;t>0,
\end{equation*}%
where $x_{i}\in X$, $i=1,2,\dots $, and $\left( \sum_{i=1}^{\infty
}\left\Vert x_{i}\right\Vert ^{p}\right) ^{1/p}<\infty $. Clearly, we can
rewrite this as follows: 
\begin{equation*}
K\left( t,x;\overline{X}\right) =\left( \sum_{i=1}^{\infty }\left( \lambda
_{i}f\left( t\right) K\left( t,y_{i};\overline{X}\right) \right) ^{p}\right)
^{1/p},\;\;t>0,
\end{equation*}%
for some $y_{i}\in X$, $\lambda _{i}\in \mathbb{R}$, with $\Vert y_{i}\Vert
_{X}=1$, $\sum_{i=1}^{\infty }\left\vert \lambda _{i}\right\vert ^{p}<\infty 
$, and $f\in \Sigma \left( \overline{L^{\infty }}\right) $ such that $\Vert
f\Vert _{\Sigma \left( \overline{L^{\infty }}\right) }\leq 1$. Since $%
\lambda _{i}f\left( \cdot \right) K\left( \cdot ,y_{i};\overline{X}\right)
\in L^{\infty }\left( 1/K\left( \cdot ,y_{i};\overline{X}\right) \right) $, $%
i=1,2,\dots $, we have 
\begin{equation*}
K\left( \cdot ,x;\overline{X}\right) \in \left( \sum_{y\in X,\left\Vert
y\right\Vert =1}L^{\infty }\left( 1/\varphi _{y}\right) ^{p}\right) ^{1/p},
\end{equation*}%
and the second assertion of the theorem is proved.

From Theorem \ref{Th-K-monotone-class} it follows also that $\left\vert
x\right\vert =\left( \sum_{i=1}^{\infty }\left\vert x_{i}\right\vert
^{p}\right) ^{1/p}.$ Since 
\begin{equation*}
x_{i}\in \overline{X}_{\varphi _{x_{i}/\Vert x_{i}\Vert _{X}},\infty
:K}=Orb^{K}\left( x_{i}/\Vert x_{i}\Vert _{X};\overline{X}\right)
,\;\;i=1,2,\dots ,
\end{equation*}%
we get the first assertion.
\end{proof}

\section{Applications}

\label{Applications}

Interpolation properties of the couples of $L^{p}$-spaces, with $1\leq p\leq \infty $, are well known for a long time
(see e.g. \cite{BL76}). Recently, a similar investigation has been undertaken in a series of papers (see \cite{AsCwNi21}, \cite{CSZ}, \cite{Cad}, \cite{Ast-20}, \cite{CN17}) in the quasi-Banach setting. Results obtained in the previous sections allow us to push this research somewhat forward. Let us begin with the sequence case.

\begin{theorem}
\label{Th2} Let $0<p<q\leq \infty $. Then, every $K$-monotone quasi-Banach
sequence lattice $X$ with respect to the couple $\left(l^{p},l^{q}\right) $
is a $K$-space, i.e., $X=\left( l^{p},l^{q}\right) _{E:K}$ for some
quasi-Banach lattice $E\in Int\left( \overline{L^{\infty }}\right) $. If
additionally $q\geq 1$, the same holds for every quasi-Banach sequence
lattice $X$ such that $X\in Int\left(l^{p},l^{q}\right) $.
\end{theorem}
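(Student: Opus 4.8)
The plan is to obtain Theorem~\ref{Th2} as an application of the abstract description results of this section, namely Theorems~\ref{th6.1} and \ref{th6.2}, after verifying that the couple $\left(l^{p},l^{q}\right)$ carries the convexity these results require. The decisive preliminary observation is that, for every $0<p<q\leq\infty$, the couple $\left(l^{p},l^{q}\right)$ is a $p$-convex quasi-Banach lattice couple. Indeed, $l^{p}$ is $p$-convex with constant $1$ (Section~\ref{Prel4}), and $l^{q}$, being $q$-convex, is $p$-convex as well, since a $q$-convex lattice is $r$-convex for every $0<r\leq q$; for $l^{q}$ this amounts to Minkowski's inequality applied to the mixed norm, and it covers the endpoint $q=\infty$ too, as $l^{\infty}$ is $p$-convex with constant $1$ for all finite $p$. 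Both $l^{p}$ and $l^{q}$ embed, in an interval preserving way, into the vector lattice of all real sequences with coordinatewise convergence, so $\left(l^{p},l^{q}\right)$ is indeed a quasi-Banach lattice couple.

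First I would prove the assertion for a $K$-monotone $X$, splitting into two cases according to the size of $p$. If $1\leq p<q\leq\infty$, then $l^{p}$ and $l^{q}$ are Banach lattices, so $\left(l^{p},l^{q}\right)$ is a Banach couple, and Theorem~\ref{th6.1} gives $Int^{KM}\left(l^{p},l^{q}\right)=Int^{K}\left(l^{p},l^{q}\right)$; hence $X=\left(l^{p},l^{q}\right)_{E:K}$ with $E\in Int\left(\overline{L^{\infty}}\right)$. If instead $0<p<1$, then by the previous paragraph $\left(l^{p},l^{q}\right)$ is a $p$-convex quasi-Banach lattice couple with $0<p<1$, and since $X$ is a $K$-monotone quasi-Banach lattice with respect to it, Theorem~\ref{th6.2} yields $X\in Int^{K}\left(l^{p},l^{q}\right)$, that is, again $X=\left(l^{p},l^{q}\right)_{E:K}$ for some quasi-Banach function lattice $E\in Int\left(\overline{L^{\infty}}\right)$. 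This settles the first statement over the whole range $0<p<q\leq\infty$.

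For the second statement I would drop the $K$-monotonicity hypothesis by appealing to the Calder\'{o}n-Mityagin property. When $q\geq1$, the couple $\left(l^{p},l^{q}\right)$ is a Calder\'{o}n-Mityagin couple by \cite[Corollary~5.4]{AsCwNi21}, and, as recalled in Section~\ref{Prel1}, this forces every interpolation space with respect to $\left(l^{p},l^{q}\right)$ to be $K$-monotone. Thus any quasi-Banach sequence lattice $X\in Int\left(l^{p},l^{q}\right)$ is $K$-monotone, and the first part applies to give $X=\left(l^{p},l^{q}\right)_{E:K}$.

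I expect no serious obstacle: the proof is essentially a matter of routing the hypotheses into the correct general theorem. The only point needing genuine care is the $p$-convexity of $l^{q}$ for $p<q$, which is what places the non-Banach range $0<p<1$ within the scope of Theorem~\ref{th6.2}. The one substantial external ingredient is the equivalence from \cite{AsCwNi21} that $\left(l^{p},l^{q}\right)$ is a Calder\'{o}n-Mityagin couple precisely when $q\geq1$; this is exactly what compels the extra hypothesis $q\geq1$ in the second statement and explains the contrast with the function couples $\left(L^{p},L^{q}\right)$ highlighted in the Introduction.
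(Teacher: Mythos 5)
Your proposal is correct and follows essentially the same route as the paper: the first claim is obtained by feeding the convexity of $\left(l^{p},l^{q}\right)$ into the general description theorems of Section~\ref{description of K-monotone lattices} (the paper invokes Theorem~\ref{th6.2} directly via $L$-convexity, while you split off the Banach range $p\geq 1$ and use Theorem~\ref{th6.1} there, a cosmetic difference), and the second claim reduces to the first by the fact that for $q\geq 1$ every interpolation quasi-Banach sequence lattice between $l^{p}$ and $l^{q}$ is $K$-monotone, which the paper takes from \cite[Corollary~4.6]{AsCwNi21} and you derive from the uniform Calder\'{o}n-Mityagin property \cite[Corollary~5.4]{AsCwNi21}. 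Your explicit verification that $l^{q}$ is $p$-convex for $p<q$ is a sound and welcome detail, but the argument is the same in substance.
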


\begin{proof}
The first claim follows immediately from Theorem \ref{th6.2} and the fact that $\left( l^{p},l^{q}\right) $, $0<p<q\leq \infty $, is a $L$-convex quasi-Banach lattice couple. To get the second, one should use also Corollary 4.6 from \cite{AsCwNi21}, which asserts that in the case when $q\geq 1$ every interpolation quasi-Banach sequence space between $l^{p}$ and 
$l^{q}$ is $K$-monotone with respect to the couple $\left(
l^{p},l^{q}\right) $.
\end{proof}

Recall that a quasi-Banach sequence lattice $X$ is called \textit{symmetric}
if $X\subset l^\infty$ and the conditions $y_{k}^{\ast }\le x_{k}^{\ast }$, $%
k=1,2,\dots $, $x=(x_{k})_{k=1}^\infty\in X$ imply that $y=(y_{k})_{k=1}^%
\infty\in X$ and $\|y\|_{X}\le \|x\|_{X}$. Here, $(u_{k}^{\ast
})_{k=1}^{\infty }$ denotes the nonincreasing permutation of the bounded
sequence $(|u_{k}|)_{k=1}^{\infty }$ defined by 
\begin{equation*}
u_{k}^{\ast }:=\inf_{\mathrm{card}\,A=k-1}\sup_{i\in\mathbb{N}\setminus
A}|u_i|,\;\;k\in\mathbb{N}
\end{equation*}
(more detail about symmetric sequence spaces see in \cite{LT79-II} or \cite{KPS82}).

\begin{corollary}
\label{lsmall-cor1} Suppose $X$ is a symmetric quasi-Banach sequence space.
Then, for each sufficiently small $p>0$ and $E\in Int\left( \overline{L^{\infty }}\right) $ we have $X=\left(l^{p},l^{\infty
}\right) _{E:K}$.
\end{corollary}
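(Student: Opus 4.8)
The plan is to deduce the statement directly from Theorem \ref{Th2} by verifying that a symmetric space $X$ is $K$-monotone with respect to the couple $(l^{p},l^{\infty})$ as soon as $p$ is small enough. First I would normalize $\|e_{1}\|_{X}=1$ and use the Aoki--Rolewicz theorem (Section \ref{Prel4}) to fix $p\in(0,1]$ for which $X$ is $p$-normable. For this $p$, and every smaller one, the expansion $x=\sum_{k}x_{k}e_{k}$ together with \eqref{equ0} shows that the partial sums form a Cauchy sequence in $X$ and that $\|x\|_{X}\le B\|x\|_{l^{p}}$ whenever $x\in l^{p}$; hence $l^{p}\hookrightarrow X$ continuously, while $X\subset l^{\infty}$ holds by the very definition of a symmetric space. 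Since $\Delta(l^{p},l^{\infty})=l^{p}$ and $\Sigma(l^{p},l^{\infty})=l^{\infty}$, this makes $X$ an intermediate quasi-Banach sequence lattice with respect to $(l^{p},l^{\infty})$, so that all that remains is $K$-monotonicity.

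To obtain the latter I would pass through a submajorization statement. By the Holmstedt-type formula $K(t,z;l^{p},l^{\infty})\cong\bigl(\int_{0}^{t^{p}}(z^{\ast}(s))^{p}\,ds\bigr)^{1/p}$ (see e.g.\ \cite{BSh}), the hypothesis $K(t,y;l^{p},l^{\infty})\le K(t,x;l^{p},l^{\infty})$ for all $t>0$ is equivalent, up to a constant $C$, to $\int_{0}^{s}(y^{\ast})^{p}\le C\int_{0}^{s}(x^{\ast})^{p}$ for all $s>0$, i.e.\ to the Hardy--Littlewood--P\'olya submajorization $(y^{\ast})^{p}\prec\prec C\,(x^{\ast})^{p}$. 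By the classical characterization of submajorization (\cite{KPS82}) there is a doubly stochastic operator $D=\sum_{\sigma}\lambda_{\sigma}P_{\sigma}$, $\sum_{\sigma}\lambda_{\sigma}=1$, with $(y^{\ast})^{p}\le D\bigl((Cx^{\ast})^{p}\bigr)$ componentwise, where $P_{\sigma}$ are permutation operators. Setting $w:=\bigl(\sum_{\sigma}\lambda_{\sigma}P_{\sigma}((Cx^{\ast})^{p})\bigr)^{1/p}$ (coordinatewise powers), one has $y^{\ast}\le w$, and the $(p)$-convexity of $X$ applied to $z_{\sigma}:=C\lambda_{\sigma}^{1/p}P_{\sigma}x^{\ast}$ gives
\begin{equation*}
\|w\|_{X}\le M^{(p)}(X)\Bigl(\sum_{\sigma}\|z_{\sigma}\|_{X}^{p}\Bigr)^{1/p}
=C\,M^{(p)}(X)\,\|x^{\ast}\|_{X}\Bigl(\sum_{\sigma}\lambda_{\sigma}\Bigr)^{1/p}
=C\,M^{(p)}(X)\,\|x\|_{X},
\end{equation*}
using $\|P_{\sigma}x^{\ast}\|_{X}=\|x^{\ast}\|_{X}=\|x\|_{X}$ (symmetry). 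Since $y^{\ast}\le w$ and $X$ is a lattice, $y\in X$ with $\|y\|_{X}\le C\,M^{(p)}(X)\|x\|_{X}$, which is exactly $K$-monotonicity of $X$ with respect to $(l^{p},l^{\infty})$.

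The main difficulty, and precisely what the phrase ``sufficiently small $p$'' is meant to absorb, is that the last computation uses the $(p)$-convexity of $X$, not merely its $p$-normability. Thus the crux is the lemma that a symmetric quasi-Banach sequence lattice is $L$-convex; granting this, Kalton's theorem \cite{Kal84} yields $r$-convexity for some $r>0$, hence $p$-convexity for every $p\le r$, and after shrinking $p$ both the $p$-normability used in the first step and this $p$-convexity hold simultaneously. (Equivalently, one may instead transfer the submajorization through Proposition \ref{renorming}: for such $p$ the convexification $X^{(1/p)}$ is lattice-isomorphic to a symmetric Banach sequence space and hence, by Calder\'on's theorem, respects submajorization.) I expect this $L$-convexity step to be the only genuinely non-formal point.

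Once $X$ is shown to be $K$-monotone with respect to the $L$-convex couple $(l^{p},l^{\infty})$ for all sufficiently small $p$, Theorem \ref{Th2} applies directly and produces, for each such $p$, a quasi-Banach lattice $E\in Int\left(\overline{L^{\infty}}\right)$ with $X=\left(l^{p},l^{\infty}\right)_{E:K}$, which is the assertion of the corollary.
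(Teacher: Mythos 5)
Your overall strategy (verify $K$-monotonicity of $X$ with respect to $(l^{p},l^{\infty})$ for small $p$ and then invoke Theorem \ref{Th2}) is reasonable, but it is not the paper's route, and as written it has a genuine gap exactly where you locate it. The entire submajorization computation requires the \emph{lattice} $(p,p)$-convexity of $X$: the estimate $\|(\sum_{\sigma}|z_{\sigma}|^{p})^{1/p}\|_{X}\leq M^{(p)}(X)(\sum_{\sigma}\|z_{\sigma}\|_{X}^{p})^{1/p}$ is precisely inequality \eqref{equ2} with $p=q$. What the Aoki--Rolewicz theorem gives you for free is only $p$-normability, which (as noted in Section \ref{Prel4}) yields $(1,p)$-convexity, not $(p,p)$-convexity; for $p<1$ these are genuinely different. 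So your argument stands or falls with the assertion that every symmetric quasi-Banach sequence space is $L$-convex (equivalently, $r$-convex for some $r>0$). You call this ``the lemma'' and ``the only genuinely non-formal point'' but give no proof and no reference; it is not proved anywhere in this paper, it does not follow from Kalton's theorem (which converts $r$-convexity into $L$-convexity, not symmetry into $r$-convexity), and it is exactly the kind of statement for which Kalton's non-$L$-convex lattices warn one to be careful. Without it the proof collapses. A secondary weakness: the Birkhoff-type decomposition $D=\sum_{\sigma}\lambda_{\sigma}P_{\sigma}$ of a doubly (sub)stochastic operator into a convex combination of permutations is a finite-dimensional fact; on $\mathbb{N}$ one only has approximate versions, and passing to such limits again costs convexity constants that a mere quasi-norm does not control.

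The paper avoids all of this. Its proof is two lines: by \cite[Proposition~3.4]{AsCwNi21}, a symmetric quasi-Banach sequence space lies in $Int\left(l^{p},l^{\infty}\right)$ for all sufficiently small $p>0$ (that result is obtained from $p$-normability alone, with no lattice convexity hypothesis on $X$), and then the \emph{second} assertion of Theorem \ref{Th2} applies because $q=\infty\geq 1$: interpolation automatically upgrades to $K$-monotonicity via \cite[Corollary~4.6]{AsCwNi21}, and Theorem \ref{th6.2} only needs the \emph{couple} $(l^{p},l^{\infty})$ to be $p$-convex, which it is, never $X$ itself. If you want to salvage your direct approach, you must either prove the $L$-convexity of symmetric quasi-Banach sequence spaces or replace the doubly-stochastic step by an argument that survives with $p$-normability only (for instance, the dyadic-layer decomposition of $x$ used in the cited proposition).
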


\begin{proof}
Observe that, by \cite[Proposition~3.4]{AsCwNi21}, $X\in Int\left(
l^{p},l^{\infty }\right) $ for every sufficiently small $p>0$. It remains to
apply Theorem \ref{Th2}.
\end{proof}

From Theorem \ref{Th-K-monotone-class1} it follows

\begin{corollary}
Let $0<p<q\leq \infty $ and let $X$ be a $L$-convex quasi-Banach sequence
space such that $X$ is $K$-monotone with respect to the couple $\left(
l^{p},l^{q}\right) $. Then, we have 
\begin{equation*}
X=\left( \sum_{x\in X,\Vert x\Vert =1}\left( l^{p},l^{q}\right) _{\varphi
_{x},\infty :K}^{p}\right) ^{1/p},
\end{equation*}%
where $\left( l^{p},l^{q}\right) _{\varphi _{x},\infty :K}$ is the
generalized Marcinkiewicz space with $\varphi _{x}(t)=K\left(
t,x:l^{p},l^{q}\right) $, i.e., 
\begin{equation*}
\Vert y\Vert _{\left( l^{p},l^{q}\right) _{\varphi _{x},\infty
:K}}:=\sup_{t>0}\frac{K\left( t,y:l^{p},l^{q}\right) }{\varphi _{x}(t)}.
\end{equation*}
\end{corollary}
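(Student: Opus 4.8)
The plan is to obtain the corollary by specializing Theorem~\ref{Th-K-monotone-class1} to the couple $\overline{X}=(l^{p},l^{q})$, after checking its hypotheses and recognizing the $K$-orbits in its conclusion as the generalized Marcinkiewicz spaces in the statement. First I would verify that $\overline{X}$ is an $L$-convex quasi-Banach lattice couple: the lattice $l^{p}$ is $p$-convex and $l^{q}$ is $q$-convex (each with constant $1$), so each is $r$-convex for a suitable $r>0$ and hence $L$-convex by \cite[Theorem~2.2]{Kal84}; by the definition in Section~\ref{Prel2} the couple is then $L$-convex. Since $X$ is an $L$-convex quasi-Banach sequence lattice that is $K$-monotone with respect to $\overline{X}$, Theorem~\ref{Th-K-monotone-class1} applies and yields an exponent $p_{0}>0$ together with the representation
\begin{equation*}
X=\Big(\sum_{y\in X,\ \Vert y\Vert =1}Orb^{K}(y;\overline{X})^{p_{0}}\Big)^{1/p_{0}}.
\end{equation*}

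It remains to identify the orbits. Straight from the definition of the $K$-orbit,
\begin{equation*}
\Vert z\Vert_{Orb^{K}(y;\overline{X})}=\sup_{t>0}\frac{K(t,z;l^{p},l^{q})}{K(t,y;l^{p},l^{q})}=\sup_{t>0}\frac{K(t,z;l^{p},l^{q})}{\varphi_{y}(t)},
\end{equation*}
which is precisely the quasi-norm of the generalized Marcinkiewicz space $(l^{p},l^{q})_{\varphi_{y},\infty:K}$ with $\varphi_{y}(t)=K(t,y;l^{p},l^{q})$; equivalently $Orb^{K}(y;\overline{X})=(l^{p},l^{q})_{L^{\infty}(1/\varphi_{y}):K}$, the space already named in the second assertion of Theorem~\ref{Th-K-monotone-class1}. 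Thus the two constructions coincide isometrically, and substituting this identity into the displayed representation reduces the corollary to the assertion that the exponent $p_{0}$ may be taken equal to $p$.

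This exponent-matching is the step I expect to be the main obstacle. In the proof of Theorem~\ref{Th-K-monotone-class1}, $p_{0}$ is any value (necessarily $\leq 1$) for which all three lattices $l^{p}$, $l^{q}$ and $X$ are $p_{0}$-convex, so in general it is only $\min(p,r_{X})$, where $r_{X}$ is a convexity index of $X$. For $l^{p}$ and $l^{q}$ there is no loss, since $l^{p}$ is $p$-convex and, as $q>p$, so is $l^{q}$; hence everything comes down to whether the $K$-monotone lattice $X$ can be taken $p$-convex. Here I would use that $\overline{X}$ is a $p$-convex couple, so that Theorem~\ref{th6.2} makes $X$ a $K(p,s)$-monotone space for some $s\in(0,p]$, and Proposition~\ref{pr6.8} records the attendant convexity of $X$. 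The genuinely delicate matter --- and the point a careful reader must pin down --- is that this convexity be strong enough to force $p_{0}=p$ rather than a strictly smaller index; this lives in the non-Banach range $0<p<1$ on which the paper concentrates, where $X$ is not \emph{a priori} $p$-convex and one cannot simply read the exponent off its $L$-convexity alone.
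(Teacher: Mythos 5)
Your route is exactly the paper's: the text offers no argument for this corollary beyond the words ``From Theorem \ref{Th-K-monotone-class1} it follows'', so the entire content is (i) checking that $(l^{p},l^{q})$ is an $L$-convex quasi-Banach lattice couple and (ii) observing that $Orb^{K}(y;\overline{X})$ is, straight from the definition of the $K$-orbit, isometrically the generalized Marcinkiewicz space $(l^{p},l^{q})_{\varphi_{y},\infty:K}$. Both of these steps you carry out correctly.

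Your worry about the exponent is well placed, and you should not expect to remove it from the stated hypotheses. Theorem \ref{Th-K-monotone-class1} produces the representation only for an exponent $p_{0}$ such that $X_{0}$, $X_{1}$ \emph{and} $X$ are all $p_{0}$-convex: its proof passes through $X=\overline{X}_{\widehat{X}_{p_{0},p_{0}}:K}$, i.e., through $K(p_{0},p_{0})$-monotonicity of $X$, which by Proposition \ref{pr6.8} is tied to $(p_{0},p_{0})$-convexity of $X$. For the couple $(l^{p},l^{q})$ the endpoint spaces are $p_{0}$-convex for every $p_{0}\leq p$, so they impose no restriction; but the hypotheses on $X$ ($L$-convexity plus $K$-monotonicity) only yield, via Theorem \ref{th6.2} and Proposition \ref{pr6.8}, that $X$ is $(p,s)$-convex for some $s\in(0,p]$, which is weaker than $p$-convexity. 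Hence nothing forces $p_{0}=p$, and the letter $p$ in the displayed formula must be read as the existentially quantified exponent of Theorem \ref{Th-K-monotone-class1} (any $p_{0}>0$ for which $X$ is $p_{0}$-convex), or else the corollary requires the additional hypothesis that $X$ is $p$-convex. With that reading your argument is complete; the unresolved ``exponent-matching'' you flag is a defect of the corollary's phrasing rather than of your proof.
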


\begin{theorem}
\label{Th3} Let $0<p<q<1$ and let $X$ be a quasi-Banach sequence lattice.
Then $X\in Int\left( l^{p},l^{q}\right) $ if and only if $X=\left( l^{p},l^{1}\right)_{E:K}$, for some quasi-Banach lattice $E\in Int\left( \overline{L^{\infty }}\right) $ such
that $E\subseteq L^{q}\left( s^{-\theta },\frac{dt}{t}\right)$, where $1/q=\left( 1-\theta \right) /p+\theta $.
\end{theorem}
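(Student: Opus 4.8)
The plan is to reduce the whole statement to the couple $\left(l^{p},l^{1}\right)$, which---unlike $\left(l^{p},l^{q}\right)$ for $q<1$---is a uniform Calder\'{o}n--Mityagin couple, by exploiting the elementary fact that on sequence spaces the bounded operators of $l^{q}$ and of $l^{1}$ coincide when $0<q\le 1$. Concretely, the first and central step is to prove the operator identity
\[
\mathfrak{L}\left(l^{p},l^{q}\right)=\mathfrak{L}\left(l^{p},l^{1}\right)\qquad (0<p<q<1),
\]
with equivalence of the quasi-norms. Since $l^{q},l^{1}\hookrightarrow l^{\infty }$, every operator bounded on either endpoint is represented by its matrix $t_{ij}=(Te_{j})_{i}$, and each column $Te_{j}$ lies in the corresponding sum space. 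For $0<q\le 1$ one has $\Vert \cdot \Vert _{l^{1}}\le \Vert \cdot \Vert _{l^{q}}$, so for $x\in c_{00}$ (finite sequences) the triangle inequality gives $\Vert Tx\Vert _{l^{1}}\le \sum_{j}|x_{j}|\,\Vert Te_{j}\Vert _{l^{1}}\le \sum_{j}|x_{j}|\,\Vert Te_{j}\Vert _{l^{q}}\le \Vert T\Vert _{l^{q}\to l^{q}}\Vert x\Vert _{l^{1}}$; hence any $T\in \mathfrak{L}\left(l^{p},l^{q}\right)$ extends by density of $c_{00}$ to an operator in $\mathfrak{L}\left(l^{p},l^{1}\right)$ that agrees with $T$ on $l^{q}=\Sigma \left(l^{p},l^{q}\right)$. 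Conversely, if $S\in \mathfrak{L}\left(l^{p},l^{1}\right)$, the interpolation inequality for $l^{r}$-norms $\Vert Se_{j}\Vert _{l^{q}}\le \Vert Se_{j}\Vert _{l^{p}}^{1-\theta }\Vert Se_{j}\Vert _{l^{1}}^{\theta }$ together with the same column estimate (again using $q<1$) yields $S\in \mathfrak{L}\left(l^{p},l^{q}\right)$.

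Granting this identity, I would deduce the set equality
\[
Int\left(l^{p},l^{q}\right)=\{X:\ X\in Int\left(l^{p},l^{1}\right)\ \text{and}\ X\hookrightarrow l^{q}\}.
\]
Indeed $\Delta \left(l^{p},l^{q}\right)=\Delta \left(l^{p},l^{1}\right)=l^{p}$, while $\Sigma \left(l^{p},l^{q}\right)=l^{q}\subseteq l^{1}=\Sigma \left(l^{p},l^{1}\right)$, so the two intermediate-space requirements differ only through the inclusion $X\hookrightarrow l^{q}$; and by the operator identity the invariance conditions defining the two interpolation classes are identical, since an operator in one class extends to (resp. restricts from) the other without leaving $l^{q}\supseteq X$. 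Applying now Theorem~\ref{Th2} to the couple $\left(l^{p},l^{1}\right)$, whose second index equals $1\ge 1$, I obtain that $X\in Int\left(l^{p},l^{1}\right)$ holds if and only if $X=\left(l^{p},l^{1}\right)_{E:K}$ for some $E\in Int\left(\overline{L^{\infty }}\right)$.

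It then remains to translate the constraint $X\hookrightarrow l^{q}$ into $E\subseteq L^{q}\left(s^{-\theta },\tfrac{dt}{t}\right)$. For this I would invoke the classical identification $l^{q}=\left(l^{p},l^{1}\right)_{\theta ,q}=\left(l^{p},l^{1}\right)_{L^{q}(s^{-\theta },dt/t):K}$ with $1/q=(1-\theta )/p+\theta $, which remains valid in the quasi-Banach range. One direction is immediate: $E\subseteq L^{q}\left(s^{-\theta },dt/t\right)$ gives $X=\left(l^{p},l^{1}\right)_{E:K}\subseteq \left(l^{p},l^{1}\right)_{L^{q}(s^{-\theta }):K}=l^{q}$ by monotonicity of the $K$-method in the parameter. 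For the converse I would use that $\left(l^{p},l^{1}\right)$ is $Conv$-abundant, so that the functions $K\left(\cdot ,x;l^{p},l^{1}\right)$ exhaust $Conv$ up to equivalence; then $X\hookrightarrow l^{q}$ forces $Conv\cap E\subseteq L^{q}\left(s^{-\theta }\right)$, and since both $E$ and $L^{q}\left(s^{-\theta }\right)$ belong to $Int\left(\overline{L^{\infty }}\right)$ (where membership is governed by the least concave majorant) this cone inclusion upgrades to the genuine lattice inclusion $E\subseteq L^{q}\left(s^{-\theta },dt/t\right)$.

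The conceptual crux is the operator identity $\mathfrak{L}\left(l^{p},l^{q}\right)=\mathfrak{L}\left(l^{p},l^{1}\right)$: it is exactly what allows the non-Calder\'{o}n--Mityagin couple $\left(l^{p},l^{q}\right)$ ($q<1$) to share its interpolation spaces with the Calder\'{o}n--Mityagin couple $\left(l^{p},l^{1}\right)$, subject only to the sum-space restriction $X\hookrightarrow l^{q}$. I expect the main technical care to lie in two places: first, the domain/extension bookkeeping in that identity---one must check that the $l^{1}$-extension of $T\in \mathfrak{L}\left(l^{p},l^{q}\right)$ coincides with $T$ on all of $X\subseteq l^{q}$, which follows from density of $c_{00}$ and continuity of both $T$ on $l^{q}$ and its extension on $l^{1}$; and second, the parameter translation, where the $Conv$-abundance of $\left(l^{p},l^{1}\right)$ and the membership of $E,L^{q}\left(s^{-\theta }\right)$ in $Int\left(\overline{L^{\infty }}\right)$ are precisely what convert the inclusion of $K$-orbits into an inclusion of the function parameters.
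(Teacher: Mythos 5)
Your overall strategy is sound and runs parallel to the paper's: reduce everything to the couple $\left(l^{p},l^{1}\right)$, apply Theorem \ref{Th2} there, and then account for the extra constraint $X\hookrightarrow l^{q}$. Your operator identity $\mathfrak{L}\left(l^{p},l^{q}\right)=\mathfrak{L}\left(l^{p},l^{1}\right)$ (extension to the Banach envelope via the columns $Te_{j}$ in one direction, restriction using $l^{q}=\left(l^{p},l^{1}\right)_{\theta,q:K}$ in the other) is correct and is essentially a self-contained substitute for the two external ingredients the paper uses at these points, namely that $l^{q}$ is a $K$-space of $\left(l^{p},l^{1}\right)$ (forward direction) and \cite[Corollary~3.3]{AsCwNi21} (converse direction). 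Up to and including the application of Theorem \ref{Th2}, the argument works.

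The gap is in the last step, where you convert $X\hookrightarrow l^{q}$ into $E\subseteq L^{q}\left(s^{-\theta},\frac{dt}{t}\right)$ for the parameter $E$ produced by Theorem \ref{Th2}. Your argument rests on the claim that $\left(l^{p},l^{1}\right)$ is $Conv$-abundant, and it is not: for every $x\in l^{1}$ the function $t\mapsto K\left(t,x;l^{p},l^{1}\right)$ satisfies $K\left(t,x\right)\leq t\left\Vert x\right\Vert_{l^{1}}$ for all $t$ and $K\left(t,x\right)/t\uparrow\left\Vert x\right\Vert_{l^{1}}$ as $t\downarrow 0$, so every realizable $K$-functional is equivalent to a linear function near the origin (and is bounded at infinity when $x\in l^{p}$). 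A function such as $f(t)=\min\left(1,t^{1/2}\right)\in Conv$ is therefore not equivalent to any $K$-functional of this couple, and your passage from $X\hookrightarrow l^{q}$ to $Conv\cap E\subseteq L^{q}\left(s^{-\theta}\right)$ does not get started. Moreover, the inclusion $E\subseteq L^{q}\left(s^{-\theta},\frac{dt}{t}\right)$ need not hold for an arbitrary parameter $E$ representing $X$, since distinct parameters can generate the same $K$-space. The statement only asks for \emph{some} admissible parameter, and the paper's fix is a one-liner you should adopt: with $E_{1}:=L^{q}\left(s^{-\theta},\frac{dt}{t}\right)$, the embedding $X\subseteq l^{q}$ gives
\begin{equation*}
X=X\cap l^{q}=\left(l^{p},l^{1}\right)_{E_{0}:K}\cap\left(l^{p},l^{1}\right)_{E_{1}:K}=\left(l^{p},l^{1}\right)_{E_{0}\cap E_{1}:K},
\end{equation*}
and $E:=E_{0}\cap E_{1}\in Int\left(\overline{L^{\infty}}\right)$ is contained in $E_{1}$ by construction. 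With that replacement your proof closes; the converse direction as you wrote it (monotonicity of the $K$-method in the parameter plus your extension argument) is fine.
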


\begin{proof}
Suppose first that $X\in Int\left( l^{p},l^{q}\right) $. Since $l^{q}=\left( l^{p},l^{1}\right) _{\theta,q:K}=\left( l^{p},l^{1}\right) _{E_{1}:K}$, where $1/q=\left( 1-\theta \right) /p+\theta $ and $E_{1}=L^{q}\left(s^{-\theta },\frac{dt}{t}\right)$ (see e.g. \cite[Theorem~5.2.1]{BL76}), we have $X\in Int\left(l^{p},l^{1}\right) .$ Therefore, from Theorem \ref{Th2} it follows that $X=\left( l^{p},l^{1}\right) _{E_{0}:K}$ for some $E_{0}\in Int\left(\overline{L^{\infty }}\right) $. Moreover, $X\subseteq l^{q}$, and hence 
$$
X=\left( l^{p},l^{1}\right)_{E_{0}:K}\cap l^{q}=\left( l^{p},l^{1}\right)_{E_{0}:K}\cap \left( l^{p},l^{1}\right) _{E_{1}:K}=\left( l^{p},l^{1}\right)_{E_{0}\cap E_{1}:K}.$$
Thus, the desired result holds for $E:=E_{0}\cap E_{1}\in Int\left( \overline{L^{\infty }}\right) $.

Conversely, if $X=\left( l^{p},l^{1}\right)_{E:K}$, with $E\subseteq E_{1}$, then $X\subseteq \left( l^{p},l^{1}\right) _{E_{1};K}=l^{q}$, and then, by \cite[Corollary~3.3]{AsCwNi21}, $X\in Int\left( l^{p},l^{q}\right) $.
\end{proof}

\begin{proposition}
\label{right K-prop} Let $0\leq p<q\leq \infty $. Then, the couples $\left(
l^{p},l^{q}\right) $ and $\overline{L^{\infty }}$ have the uniform relative
Calder\'{o}n-Mityagin property if and only if $p\geq 1.$
\end{proposition}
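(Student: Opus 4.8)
The plan is to establish the two implications by quite different means: the forward direction ($p\ge 1$) by citing a classical result, and the reverse ($p<1$) by a scaling counterexample.

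When $1\le p<q\le\infty$, the pair $(l^{p},l^{q})$ is an ordinary Banach couple, while $\overline{L^{\infty}}=(L^{\infty},L^{\infty}(1/t))$ is a couple of weighted $L^{\infty}$-spaces. Hence the assertion is a direct instance of Peetre's theorem on the relative interpolation of an \emph{arbitrary} Banach couple with a couple of weighted $L^{\infty}$-spaces \cite{PeetreJ1971x} (see also \cite[Theorem~4.4.16]{BK91} and \cite[Theorem~4.1]{CwPe81}): that result states precisely that such a pair is a \emph{uniform} relative Calder\'{o}n--Mityagin couple, with the weighted $L^{\infty}$-couple in the role of target. No further work is needed for this implication.

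For the converse I would fix $p\in(0,1)$ and produce a sequence of test pairs whose Calder\'{o}n--Mityagin constant is forced to diverge. Let $x_{n}:=\sum_{k=1}^{n}e_{k}\in l^{p}+l^{q}$. Splitting each coordinate of an arbitrary representation $x_{n}=a+b$ according to whether its larger half is assigned to the $l^{p}$- or to the $l^{q}$-component shows that $K(t,x_{n};l^{p},l^{q})\ge c_{p}\min(n^{1/p},t\,n^{1/q})$ for a constant $c_{p}>0$ depending only on $p$ (and in fact one checks $K(t,x_{n};l^{p},l^{q})=\min(n^{1/p},t\,n^{1/q})$). Put $\varphi_{n}(t):=c_{p}\min(n^{1/p},t\,n^{1/q})$; since $\varphi_{n}\in Conv$ is concave, the choice $y_{n}:=\varphi_{n}\in\Sigma(\overline{L^{\infty}})$ gives $K(t,y_{n};\overline{L^{\infty}})=\varphi_{n}(t)\le K(t,x_{n};l^{p},l^{q})$, so $x_{n}$ and $y_{n}$ satisfy the hypothesis of the Calder\'{o}n--Mityagin property. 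Now suppose $T\in\mathfrak{L}((l^{p},l^{q}),\overline{L^{\infty}})$ satisfies $Tx_{n}=y_{n}$, and set $\phi_{k}:=Te_{k}$. Applying $\|T\|_{l^{p}\to L^{\infty}}\le\|T\|$ and $\|T\|_{l^{q}\to L^{\infty}(1/t)}\le\|T\|$ to the unit vectors $e_{k}$ yields $|\phi_{k}(t)|\le\|T\|$ and $|\phi_{k}(t)|\le\|T\|\,t$, hence $|\phi_{k}(t)|\le\|T\|\min(1,t)$ for a.e. $t>0$. Since $y_{n}=\sum_{k=1}^{n}\phi_{k}$, summation and continuity of $y_{n}$ give $\varphi_{n}(t)\le n\,\|T\|\min(1,t)$ for all $t>0$. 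Evaluating at $t_{0}:=n^{1/p-1/q}\ge 1$, where $\varphi_{n}(t_{0})=c_{p}n^{1/p}$ and $\min(1,t_{0})=1$, forces $\|T\|\ge c_{p}\,n^{1/p-1}$. As $1/p-1>0$ for $p<1$, the interpolation constant cannot be chosen independent of $n$, so the uniform relative Calder\'{o}n--Mityagin property fails; together with the first paragraph this yields the stated equivalence.

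The only genuinely computational point is the lower bound for $K(t,x_{n};l^{p},l^{q})$ on the flat vectors, and I expect it to be the main (still routine) obstacle. The conceptual heart of the argument is the observation that any operator into $\overline{L^{\infty}}$ sends each basis vector to a function dominated by $\min(1,t)$, so that reassembling a ``flat'' $K$-functional of height $n^{1/p}$ out of $n$ such pieces costs a factor $n^{1-1/p}$ exactly in the non-locally-convex range $p<1$; for $p\ge1$ this factor is $\le 1$ and creates no obstruction, in agreement with the forward direction.
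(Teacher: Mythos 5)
Your forward direction coincides with the paper's (both cite the Peetre/Cwikel--Peetre theorem on weighted $L^{\infty}$-couples as universal right $K$-partners), but your converse is a genuinely different argument. The paper assumes the uniform property with constant $\lambda$, notes that each $T:\left(l^{p},l^{q}\right)\rightarrow \overline{L^{\infty }}$ with $T(x)=K(\cdot ,x;l^{p},l^{q})$ extends from $l^{p}$ to its Banach envelope $l^{1}$ with the same norm into $L^{\infty }$, and then lets $t\rightarrow \infty $ to conclude $\left\Vert x\right\Vert _{l^{p}}\leq \lambda \left\Vert x\right\Vert _{l^{1}}$ for all $x$, an immediate contradiction. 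You instead build explicit test pairs $(x_{n},y_{n})$ with $x_{n}=\sum_{k=1}^{n}e_{k}$ and $y_{n}$ a truncated concave function dominated by $K(\cdot ,x_{n};l^{p},l^{q})$, and show that any admissible $T$ must satisfy $\left\Vert T\right\Vert \geq c_{p}\,n^{1/p-1}$ because $Te_{k}$ is pointwise dominated by $\left\Vert T\right\Vert \min (1,t)$. Both converses are correct; yours is more elementary and self-contained (no appeal to the Banach-envelope extension property from the $F$-space literature) and yields a quantitative blow-up rate, while the paper's is shorter and applies verbatim to arbitrary $x$ rather than to specially chosen flat vectors. The underlying mechanism is the same in spirit: an operator into $\overline{L^{\infty }}$ cannot see more than the $l^{1}$-mass of its argument.

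Two small points to tidy up. First, the parenthetical claim that $K(t,x_{n};l^{p},l^{q})=\min (n^{1/p},t\,n^{1/q})$ exactly is false (already for $p=1/2$, $q=1$, $n=2$, $t=2$ the split $a=e_{1}$, $b=e_{2}$ gives $3<4$); this is harmless since you only use the two-sided comparison with constant $c_{p}$, which your disjointification argument does establish, but you should delete the exact-equality remark. Second, the proposition is stated for $0\leq p<q\leq \infty $, and your counterexample is set up only for $p\in (0,1)$; the endpoint $p=0$ (with $l^{0}$ as in the cited Arazy--Cwikel paper) needs either a separate remark or a modified choice of test vectors, whereas the paper's envelope argument covers it with no extra work.
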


\begin{proof}
If $p\geq 1$ (i.e., in the Banach case), the desired result is well-known, see, for instance, \cite{CwPe81} or \cite[Theorem 4.4.16]{BK91}.

For the converse, we may assume that $p<1$. Suppose that the couples $\left(
l^{p},l^{q}\right) $ and $\overline{L^{\infty }}$ have relative 
Calder\'{o}n-Mityagin property. Observe that for each $x\in l^{q}$ we have $K\left(\cdot ,x;l^{p},l^{q}\right) \in \Sigma \left( \overline{L^{\infty }}\right) $
and 
\begin{equation*}
K\left( t,K\left( \cdot ,x;l^{p},l^{q}\right) ;\overline{L^{\infty }}\right)
=K\left( t,x;l^{p},l^{q}\right) ,\;\;t>0
\end{equation*}%
(see Section \ref{Prel1}). Therefore, there exists $\lambda >0$ such that
for every $x\in l^{q}$ we can find a bounded linear operator $T:\left(
l^{p},l^{q}\right) \rightarrow \overline{L^{\infty }}$ satisfying $T\left(
x\right) =K\left( \cdot ,x;l^{q},l^{p}\right) $ and $\left\Vert T\right\Vert
\leq \lambda .$ Since $0\leq p<1,$ then $T$ has an extension $\widetilde{T}$ to
the Banach envelope of $l^{p}$, i.e., to $l^{1}$, such that $\Vert 
\widetilde{T}\Vert _{l^{1}\rightarrow L^{\infty }}=\Vert {T}\Vert $ (see
e.g. \cite{KPR84} or \cite{AsCwNi21}). Hence, $\widetilde{T}:\left(
l^{1},l^{q}\right) \rightarrow \overline{L^{\infty }}.$ Thus, for all $t>0$
and $x\in l^{p}\subset l^{1}$ 
\begin{equation*}
K\left( t,x;l^{p},l^{q}\right) =\left\vert T\left( x\right) \left( t\right)
\right\vert =|\widetilde{T}\left( x\right) \left( t\right) |\leq \Vert 
\widetilde{T}\Vert \left\Vert x\right\Vert _{l^{1}}\leq \lambda \left\Vert
x\right\Vert _{l^{1}}.
\end{equation*}%
Letting $t\rightarrow \infty $ we infer%
\begin{equation*}
\left\Vert x\right\Vert _{l^{p}}=\sup_{t>0}K\left( t,x;l^{p},l^{q}\right)
\leq \lambda \left\Vert x\right\Vert _{l^{1}},
\end{equation*}%
which is a contradiction, because of the assumption $p<1$.
\end{proof}

\begin{remark}
In the previous result, the couple $\overline{L^{\infty }}$ may be replaced
with other Banach couples having the universal right $K$-property\footnote{%
A Banach couple $\overline{W}=(W_{0},W_{1})$ has the universal right $K $%
-property if for every Banach couple $\overline{X}=(X_{0},X_{1})$ the
couples $\overline{X}$ and $\overline{W}$ have the uniform relative Calder\'{o}n-Mityagin property.}, for instance, $\left( l^{\infty },l^{\infty
}(2^{-k})\right) $ or $\left( C,C(1/t)\right) $, where $C$ is the space of
all bounded continuous functions on $(0,\infty )$ with the natural $\sup $%
-norm (see, for instance, \cite{CwPe81}).
\end{remark}

Observe that the couple $\left( l^{p},l^{q}\right) $, $0< p<q\leq \infty $%
, is a uniform Calder\'{o}n-Mityagin couple if and only if $q\geq 1$
\cite[Corollary 5.4]{AsCwNi21}. In contrast to that, by Theorem 1.1 of the paper \cite{CSZ} (see also \cite[Remark~4.7]{AsCwNi21}), one can readily see
that the couple $\left( L^{p},L^{q}\right) $ of Lebesgue measurable
functions on $(0,\infty )$ is a uniform Calder\'{o}n-Mityagin couple for all parameters $0<p<q\leq \infty $. Therefore, in the same way as Theorem \ref{Th2}, we get the following result.

\begin{theorem}
\label{Th4} Let $0<p<q\leq \infty $. Then every quasi-Banach lattice $X$ such that $X\in Int\left( L^{p},L^{q}\right) $, where the underlying measure space is $(0,\infty)$ with the Lebesgue measure, is a $K$-space, i.e., $X=\left(L^{p},L^{q}\right) _{E:K}$ for some quasi-Banach lattice $E\in Int\left( \overline{L^{\infty }}\right) $.
\end{theorem}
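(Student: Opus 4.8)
The plan is to mirror the proof of Theorem \ref{Th2} for the function couple, the decisive new input being that $(L^p,L^q)$ enjoys the Calder\'{o}n--Mityagin property for \emph{all} admissible parameters, not merely for $q\ge 1$. First I would verify that $\overline{X}=(L^p,L^q)$ is an $L$-convex quasi-Banach lattice couple. By Section \ref{Prel4}, $L^p$ is $p$-convex and $L^q$ is $q$-convex; since any $r$-convex lattice is $L$-convex \cite[Theorem~2.2]{Kal84}, both factors are $L$-convex, whence the couple is $L$-convex by definition. This argument is uniform across the endpoint $q=\infty$, where $L^\infty$ is a Banach lattice, and the Banach range $p\ge 1$.

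Next I would invoke \cite[Theorem~1.1]{CSZ} (see also \cite[Remark~4.7]{AsCwNi21}): the couple $(L^p,L^q)$ on $(0,\infty)$ with Lebesgue measure is a uniform Calder\'{o}n--Mityagin couple for the entire range $0<p<q\le\infty$. By the remarks closing Section \ref{Prel1}, this forces every interpolation space $X\in Int(L^p,L^q)$ to be uniformly $K$-monotone, that is, $Int(L^p,L^q)=Int^{KM}(L^p,L^q)$.

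Finally I would feed this into the description machinery of Section \ref{description of K-monotone lattices}. The quickest route is to apply the Corollary following Theorem \ref{th6.2}, namely the one stated for $L$-convex couples with the Calder\'{o}n--Mityagin property and valid in both the Banach and quasi-Banach cases, which yields $Int(\overline{X})=Int^{K}(\overline{X})$ at once; this is exactly the assertion that each $X\in Int(L^p,L^q)$ equals $(L^p,L^q)_{E:K}$ for some $E\in Int(\overline{L^{\infty}})$. If one prefers to argue directly, split on $p$: for $0<p<1$ the couple is $p$-convex with $p<1$ (since $L^q$ is also $p$-convex, $p=\min(p,q)$), so a $K$-monotone $X$ is a $K$-space by Theorem \ref{th6.2}; for $p\ge 1$ the couple is Banach and Theorem \ref{th6.1} gives the same conclusion.

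The proof is short because the heavy lifting already sits in the earlier sections and in \cite{CSZ}; the only genuine subtlety is conceptual rather than computational. For the sequence couple $(l^p,l^q)$ the Calder\'{o}n--Mityagin property holds only when $q\ge 1$, which is precisely why Theorem \ref{Th2} can upgrade from $K$-monotonicity to membership in $Int$ only under that restriction. The point I must exploit here is that the result of \cite{CSZ} removes this restriction for the function couple, so the implication $X\in Int(L^p,L^q)\Rightarrow X=(L^p,L^q)_{E:K}$ holds across the full range $0<p<q\le\infty$ with no extra hypothesis on $q$.
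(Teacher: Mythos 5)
Your argument is correct and is essentially the paper's own proof: the paper disposes of Theorem \ref{Th4} by saying it follows ``in the same way as Theorem \ref{Th2}'', i.e., the couple $(L^p,L^q)$ is $L$-convex (indeed $p$-convex, since $L^q$ and $L^\infty$ are $p$-convex for $p\le q$), the uniform Calder\'{o}n--Mityagin property from \cite[Theorem~1.1]{CSZ} upgrades every interpolation space to a uniformly $K$-monotone one, and Theorem \ref{th6.2} (or Theorem \ref{th6.1} in the Banach range) then identifies it with a $K$-space. You correctly isolate the one point where the function couple differs from the sequence couple, namely that the Calder\'{o}n--Mityagin property holds for the full range $0<p<q\le\infty$, which is exactly why no restriction on $q$ appears here.
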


\newpage

\end{document}